\newcommand{\old}[1]{}
\theoremstyle{plain}
\newtheorem{thm}{Theorem}[section]
\newtheorem{conj}{Conjecture}
\newtheorem{cor}[thm]{Corollary}
\newtheorem{prop}[thm]{Proposition}
\theoremstyle{definition}
\newtheorem{remark}[thm]{Remark}
\newtheorem{qn}[thm]{Question}
\title[Partition Poset Homology]{Some Problems Arising from Partition Poset Homology\footnote{10 October 2014}}
\author{Sheila Sundaram }
\address{Pierrepont School, One Sylvan Road North, Westport, CT 06880}
\email{shsund@comcast.net}
\date{10 October 2014}
\dedicatory{Dedicated to Richard Stanley on the occasion of his 70th birthday.}
\thanks{The author would like to express her gratitude for the invitation to 
submit this paper, and to Patricia Hersh in particular for her encouragement.   }
\subjclass[2010]{20C30, 05E18, 06A07}
\begin{document}

\begin{abstract}
We describe some open problems related to homology representations of subposets of the partition lattice, beginning with questions first raised in Stanley's work on group actions on posets.

\end{abstract}

\maketitle

\section{Introduction}




This paper is about a part of Richard Stanley's work that has exerted a tremendous influence on my 
mathematical career.  I am privileged to have had Richard as my thesis adviser.  I recall that during the first few months of our interaction 
 I was  in a somewhat catatonic state of awe;  this was  not exactly 
conducive to producing new mathematics.  All that changed when Richard taught 18.318 (M.I.T.'s yearly \lq\lq Topics in Combinatorics" course for graduate students)
 in the Spring of 1984.  
I forget  the exact subtitle of the course, but the emphasis was on symmetric functions and representation theory.

Richard's lectures were always a model of clarity and exposition.  He had a knack for making very difficult results seem obvious and effortless
(until I tried to reconstruct the arguments myself).  From Richard's 18.318 I learnt about Schur-Weyl duality, and what the mysterious   plethysm operation of 
symmetric functions meant in concrete representation-theoretic terms, for both the symmetric group and the general linear group. I still remember the sense of excitement  I felt upon finally 
gaining a useful understanding of these deep ideas.  I had already taken an earlier 18.318 offered by Phil Hanlon, 
on the character theory of the symmetric group, and I had read the first chapter of Ian Macdonald's 
 {\it Symmetric Functions and Hall Polynomials}.
 Richard's course, with his inimitable style, dry wit and understated humour, tied everything together for me, and gave me an appreciation of the endless possibilities, 
and the elegance, of the symmetric functions technique.  

Much of the material in that course has now been handed down to future generations in {\it Enumerative Combinatorics, Volume 2} (EC 2).  In addition to the lectures, 
the other feature that made Richard's courses invaluable was his problem sets (also handed down to posterity in both EC1 and EC2).  I remember working on them for hours on end, late into the night. 
They were truly addictive, although I was not successful in solving 
many of the problems.  Some are still open.  Today, after many years of my own teaching, I marvel at Richard's patience in reading all those papers 
(we were about ten in the class, I think). 
Many of my attempts resulted in only partial solutions, and yet he clearly read them all, and they were returned interspersed with comments here and there indicating a 
better direction to pursue.  It was always very exciting and tremendously encouraging to see a \lq\lq good" at the end of a solution, 
a couple of times \lq\lq very elegant proof," and once even 
\lq\lq I couldn't do this one myself." (For this one I have always suspected he was being a little too generous.)  In addition to Richard's remarkable patience and 
meticulousness in grading these problem sets, something I did not know at the time was that he also recorded what he deemed to be the best solutions, 
and generously and unfailingly credited his students for them in both EC 1 and EC 2.  

I believe it was here that I first understood how to dissect virtual modules via the Frobenius characteristic (something which  turned out to be very useful  in predicting {\it topological} properties of posets). Some years later, when I realised how inextricably the plethysm operation is  linked to the partition lattice, I would be very happy to be able to use what I had learnt in Richard's class, in my own research.
  That particular  18.318 of Richard's is, hands down, for me  the most   influential course I have ever taken.

Likewise, the most influential research paper that I read as a graduate student is Richard's {\it Some aspects of group actions on posets} \cite{St2}. It was at about the same time 
as Richard's course, but curiously, I would realise its profound impact   only when I  came back to it in a few years.  That paper introduced me to poset homology.
  The order complex of a partially ordered set provides a beautifully concrete way to illustrate  many of the big theorems in 
algebraic topology, and Richard's paper highlights the fascinating interplay between combinatorics, algebraic topology and representation theory. Although 
my thesis was on a  different subject, the large majority of my later work was on topics related to the partition lattice, rank-selected 
and other subposets, and homology representations.  

In this paper I will focus on questions raised in one section of Richard's {\it Some aspects of group actions on posets}, Section 7. This part alone immediately attracted a considerable amount of attention (\cite{CHR}, \cite{Ha1}, \cite{Ha2}) and  eventually spawned a vast body of research connecting the partition lattice to the free Lie algebra, subspace arrangements, configuration spaces, Lie operads, complexes of graphs  and various generalisations.  The comprehensive survey paper of Wachs \cite{Wa} is an excellent reference for the literature and for more recent developments.  Although extensively studied, partition posets  continue to be a source of interesting open problems, often with tantalising ramifications. The partition lattice and its subposets also serve as a guiding first example of many curious phenomena, both topological and representation-theoretic, that have since been generalised and rediscovered in many other contexts in  the last decade.    Richard foreshadows many of these developments in his paper with his characteristic uncanny insight.

\section{Partition lattice homology and the trivial representation}

Recall that $\Pi_n$ is the lattice of set partitions of an 
$n$-element set, ordered by refinement.  We say a block of 
a partition is nontrivial if it consists of more than one element (for this and other basic 
definitions see \cite{St3}, EC1).   For a bounded poset $P$ (i.e., one with a unique minimal element $\hat{0}$ and a unique maximal element $\hat{1}$) we denote 
by $\hat P$ the {\it proper part} of $P,$ i.e., 
the poset $P$ with the greatest element $\hat 1$ 
and the least element $\hat 0$ removed.  We write $\Delta(P)$ for 
the order complex of $P;$ the simplices of $\Delta(P)$ are the 
chains of $\hat P.$  By the $i$th (reduced) homology 
$\tilde{H}_i(P)$ of $P$ we mean the 
$i$th (reduced) simplicial homology of its order complex $\Delta(P).$  
All homology in this paper is reduced homology taken with integer coefficients 
except for representation-theoretic discussions, in which case 
we take coefficients over the complex field. All posets 
are bounded unless explicitly stated otherwise, although  we will always consider the proper part of the poset so as to avoid topologically trivial situations.

Stanley's paper \cite{St2} considers a finite  group $G$ acting in an  order-preserving fashion on a poset $P,$ 
and the resulting action of $G$ on the unique novanishing homology of $P$  in the special case when $P$ is Cohen-Macaulay, that is, 
 if the reduced 
homology of the order complex of the proper part of every interval $[x,y]$ of $P,$ 
$\hat 0\leq x\leq y\leq \hat 1,$ vanishes below the top dimension.  
(See \cite{St3} for definitions.)
If $P$ is a ranked poset, then   the rank-selected subposet $P_S$  of 
$P$ consists of all elements with rank belonging to  the subset $S.$ Stanley \cite{St1}, and independently 
Baclawski \cite{Ba1}, had shown that rank-selection preserves the Cohen-Macaulay property.  

We will confine our discussion to the case of rank-selection in the partition lattice $\Pi_n.$  Here the nontrivial ranks are $1,2,\ldots , n-2.$  It will be convenient to write  $[a,b]$ for the interval of consecutive ranks $\{a, a+1,\ldots, b\}.$  The automorphism group of the lattice is  the symmetric group 
$S_n,$ and  Stanley observed
that the rank-selected homology modules refine the permutation representation  of $S_n$ on the maximal chains of $\Pi_n$ according to the subsets $S$ of the 
ranks, i.e., of $\{1,2,\ldots, n-2\}.$  That is, if $\alpha_n$ denotes the permutation representation of $S_n$ on the maximal chains of $\Pi_n$ and $\beta_S(n)$ denotes 
the representation on the homology of the rank-selected subposet $\Pi_n(S),$ then 
\begin{equation}
\alpha_n=\sum_{S\subseteq [1,n-2]} \beta_S(n).
\end{equation} 
More generally,  (see \cite{St2}, Theorem 1.1), if $\alpha_S(n)$ denotes the permutation representation of $S_n$ on the maximal chains of $\Pi_n(S),$ one has
\begin{equation}
\alpha_S(n)=\sum_{T\subseteq S} \beta_T(n),
\end{equation} 
and hence, by an application of the Hopf-Lefschetz formula, 
\begin{equation}
\beta_S(n)=\sum_{T\subseteq S} (-1)^{|S-T|} \alpha_T(n)
\end{equation} 

Stanley asked for a characterisation of the homology representations $\beta_S(n).$ He also gave a complete description of $\beta_S(n)$ in the case when 
$S$ is the full set of ranks $[1,n-2]$, i.e, of the top homology module of $\Pi_n.$     Let ${\rm sgn}$ denote the sign representation of $S_n,$ and let $\psi_n$ be the $S_n$-representation obtained by inducing any    faithful irreducible representation of a cyclic subgroup 
of order $n.$  Write $\beta_n$ for $\beta_{[1,n-2]}(n),$ i.e., the top homology representation of $\Pi_n.$ Then, using a crucial M\"obius function computation of Hanlon (\cite{Ha1}), Stanley showed that 

\begin{thm} (\cite{St2}, Theorem 7.3 and Corollary 7.5) $$\beta_n= ({\rm sgn})\  \psi_n.$$   The restriction of $\beta_n$  to $S_{n-1}$ is the regular representation.
\end{thm}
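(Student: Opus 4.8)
The plan is to determine $\beta_n$ by computing its $S_n$-character through the equivariant Euler characteristic, then to match the answer against the character of $({\rm sgn})\,\psi_n$, and finally to read off the restriction statement from Mackey's theorem.

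First I would invoke that $\Pi_n$ is a geometric lattice, hence Cohen--Macaulay (indeed EL-shellable), so its only nonvanishing reduced homology is $\tilde H_{n-3}(\Pi_n)=\beta_n$, of dimension $|\mu_{\Pi_n}(\hat 0,\hat 1)|=(n-1)!$. With the homology concentrated in one degree, the Hopf trace formula gives, for every $g\in S_n$,
$$\chi_{\beta_n}(g)=(-1)^{n-3}\,\tilde\chi\bigl(\Delta(\Pi_n)^g\bigr),$$
where $\Delta(\Pi_n)^g$ is the subcomplex of $g$-fixed simplices. A chain of $\hat\Pi_n$ fixed setwise by the order-preserving bijection $g$ is fixed pointwise, so $g$ never reverses the orientation of a fixed simplex and no signs intervene in the trace; moreover $\Delta(\Pi_n)^g=\Delta(\Pi_n^g)$, where $\Pi_n^g$ is the subposet of $g$-invariant partitions, again bounded with the same $\hat 0,\hat 1$. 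By Philip Hall's theorem (\cite{St3}), $\tilde\chi(\Delta(\Pi_n^g))=\mu_{\Pi_n^g}(\hat 0,\hat 1)$, so everything reduces to the M\"obius numbers of the fixed-point subposets.

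This last evaluation is the heart of the argument and the step I expect to be the main obstacle; it is precisely Hanlon's M\"obius computation (\cite{Ha1}). One examines the structure of $\Pi_n^g$: a $g$-invariant partition is built from blocks that $\langle g\rangle$ permutes transitively in families, and this forces $\mu_{\Pi_n^g}(\hat 0,\hat 1)=0$ unless every cycle of $g$ has the same length $d$, i.e.\ $g$ has cycle type $(d^{n/d})$ with $d\mid n$. When $d=n$ the poset $\Pi_n^g$ is literally the subgroup lattice of the cyclic group of order $n$, with M\"obius number the number-theoretic $\mu(n)$; when $d=1$ it is $\Pi_n$ itself, recovering $(-1)^{n-1}(n-1)!$; and in general one gets $\mu_{\Pi_n^g}(\hat 0,\hat 1)=(-1)^{(n/d)+1}\,d^{(n/d)-1}\,((n/d)-1)!\,\mu(d)$. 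On the other side, the Frobenius character formula evaluates $\chi_{\psi_n}(g)=\chi_{{\rm Ind}^{S_n}_{C_n}\zeta}(g)$: it vanishes unless $g$ is conjugate into $C_n$, i.e.\ unless $g$ has type $(d^{n/d})$, in which case the relevant character sum is the sum of the primitive $d$-th roots of unity, yielding $\chi_{\psi_n}(g)=d^{(n/d)-1}\,((n/d)-1)!\,\mu(d)$, independent of the choice of faithful $\zeta$. Since ${\rm sgn}(g)=(-1)^{n-(n/d)}$ for $g$ of type $(d^{n/d})$, the identity $(-1)^{n-3}(-1)^{(n/d)+1}=(-1)^{n-(n/d)}$ shows $\chi_{\beta_n}(g)={\rm sgn}(g)\,\chi_{\psi_n}(g)$ for all $g$, whence $\beta_n=({\rm sgn})\,\psi_n$.

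For the restriction to $S_{n-1}$ no further computation is needed. The group $C_n$, generated by an $n$-cycle, acts regularly on the underlying $n$-element set, so, taking $S_{n-1}$ to be a point stabiliser, $S_n=S_{n-1}C_n$ is a single $(S_{n-1},C_n)$-double coset and $S_{n-1}\cap C_n=\{e\}$. Mackey's theorem then gives ${\rm Res}^{S_n}_{S_{n-1}}{\rm Ind}^{S_n}_{C_n}\zeta={\rm Ind}^{S_{n-1}}_{\{e\}}\mathbb{C}=\mathbb{C}[S_{n-1}]$, the regular representation, and tensoring with ${\rm Res}^{S_n}_{S_{n-1}}{\rm sgn}={\rm sgn}_{n-1}$ leaves it unchanged. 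Hence ${\rm Res}^{S_n}_{S_{n-1}}\beta_n$ is the regular representation of $S_{n-1}$. (Alternatively one can give a topological proof by producing a subcomplex on which $S_{n-1}$ acts freely and which still carries all of $\beta_n$, but the Mackey argument is the quickest.)
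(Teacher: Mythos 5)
Your proposal is correct and follows essentially the route the paper attributes to Stanley: the Hopf--Lefschetz reduction of $\chi_{\beta_n}(g)$ to the M\"obius number of the fixed-point subposet $\Pi_n^g$, Hanlon's evaluation of those M\"obius numbers (which the paper, like you, cites rather than reproves), and comparison with the induced character of $\psi_n$. The only cosmetic difference is in the restriction statement, where you use Mackey's theorem with $S_n=S_{n-1}C_n$ and $S_{n-1}\cap C_n=\{e\}$, while one can equally note that the character ${\rm sgn}\cdot\chi_{\psi_n}$ vanishes on every non-identity element of $S_{n-1}$ (such elements have a fixed point, hence are not of cycle type $d^{n/d}$ with $d>1$); both are immediate and correct.
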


As mentioned in the introduction, this observation would eventually generate a large literature on the partition lattice (see bibliography).
 
We note that the rank-selection question was settled completely by Stanley in \cite{St2} for the Boolean lattice of subsets of a set of size $n,$ the subspace lattice $Gl_n(q)$ and the lattice of faces of a cross-polytope.

Stanley's paper and the questions raised in it were the primary motivation for the paper \cite{Su1}.
Here rank-selection in $\Pi_n$ was studied more extensively, and 
recursive plethystic descriptions were given for  the rank-selected homology representations. Although a considerable amount of representation-theoretic and enumerative information can be extracted from these formulas (see \cite{Su1}, \cite{Su2}), the problem of giving a nice characterisation of these representations appears to be a difficult one.    There is, however, a fundamental  three-term plethystic  recurrence which can be used to compute the 
representations efficiently.  Let $h_n$ denote the homogeneous symmetric function of degree $n,$ and let $\beta_S(n)$ denote both the rank-selected representation of $S_n$ and its  Frobenius characteristic (which is thus a symmetric function of degree $n$).   The square brackets denote the plethysm operation.  (See \cite{M} and \cite{St4} for definitions.)   Likewise, let $\alpha_S(n)$ denote both the $S_n$-representation and its Frobenius characteristic on the maximal chains of $\Pi_n(S).$ Finally, let $S(n,k)$ denote the Stirling number of the second kind, that is, the number of set partitions with $k$ nonempty blocks.

\begin{thm} (\cite{Su1}, Theorem 2.13, Proposition 2.16, Proposition 3.1) Let $S=\{s_1<s_2<\ldots<s_r\}, \ r\geq 2$ be a subset of the nontrivial ranks $[1, n-2]$ of $\Pi_n,$ and let 
$S-s_1$ denote the subset $\{s_2-s_1, s_3-s_1,\ldots,s_r-s_1\}$ of ranks in $\Pi_{n-s_1}.$  Then 
$$\beta_S(n)+\beta_{S\backslash \{s_1\}}(n) = \beta_{S-s_1}(n-s_1)\big[\sum_{i\geq 1} h_i\big]\vert_{\deg n}.$$
Hence, if $d_S(n)$ denotes the unique nonvanishing Betti number for the order complex of $\Pi_n(S),$  one has the recurrence
$$d_S(n)+d_{S\backslash \{s_1\}}(n) = d_{S-s_1}(n-s_1)\  S(n,n-s_1).$$
For the permutation representation of $S_n$ on the rank-selected maximal chains, one has 
$$ \alpha_S(n)=\alpha_{S-s_1}(n-s_1)[\sum_{i\geq 1} h_i\big]\vert_{\deg n}.$$
\end{thm}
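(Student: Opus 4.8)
The plan is to establish the formula for $\alpha_S(n)$ first; the recurrence for $\beta_S(n)$ then follows by M\"obius inversion, and the one for the Betti numbers $d_S(n)$ by taking dimensions. The structural input is the classical interval isomorphism in $\Pi_n$: if $\pi$ has rank ${\rm rk}(\pi)$, equivalently $n-{\rm rk}(\pi)$ blocks, then $[\pi,\hat 1]\cong\Pi_{n-{\rm rk}(\pi)}$ via the partition lattice on the blocks of $\pi$. Using this, a maximal chain $\pi_1<\pi_2<\cdots<\pi_r$ of $\Pi_n(S)$ (so ${\rm rk}(\pi_i)=s_i$) is the same thing as a set partition $\pi_1$ of $[n]$ into $k:=n-s_1$ nonempty blocks together with a maximal chain of the rank-selected subposet $\Pi_k(S-s_1)$ inside $[\pi_1,\hat 1]\cong\Pi_k$ — the ranks matching because an element of $[\pi_1,\hat 1]$ with $n-s_i$ blocks has rank $(n-s_1)-(n-s_i)=s_i-s_1$ there. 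First I would check that this correspondence is $S_n$-equivariant, with $S_n$ acting on the right-hand data through its action on $[n]$, hence on the blocks of $\pi_1$. This exhibits $\alpha_S(n)$ as the $S_n$-module obtained by the combinatorial substitution that plants an $\alpha_{S-s_1}$-structure on the block set of a set partition of $[n]$ into nonempty blocks; in the language of species, it is the substitution of the species $E_+$ of nonempty sets into the species of maximal chains of $\Pi_\bullet(S-s_1)$.

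Next I would pass from substitution to plethysm. Invoking the standard dictionary under which the Frobenius characteristic of a composite of combinatorial structures is the plethysm of their characteristics, and using ${\rm ch}(E_+)=\sum_{i\geq 1}h_i$, one gets that the characteristic of the composite is $\alpha_{S-s_1}(n-s_1)\big[\sum_{i\geq 1}h_i\big]$, whose degree-$n$ homogeneous component is exactly the claimed expression for $\alpha_S(n)$; the same argument works for every nonempty rank set, and for a singleton $\{s_1\}$ it recovers the classical fact that $S_n$ acting on the set partitions with $n-s_1$ blocks has characteristic $h_{n-s_1}\big[\sum_{i\geq 1}h_i\big]\vert_{\deg n}$. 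One could also bypass species entirely and compute $\langle\alpha_S(n),p_\mu\rangle$ directly as $z_\mu^{-1}$ times the number of maximal chains of $\Pi_n(S)$ stabilised by a permutation $w$ of cycle type $\mu$, observe that such a chain has each $\pi_i$ stabilised by $w$ and is thus governed by $\pi_1$ together with a chain stabilised by the permutation $w$ induces on the blocks of $\pi_1$, and recognise the resulting expression as the power-sum expansion of the plethysm; this last identification is where the bookkeeping is heaviest.

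For the recurrence for $\beta_S(n)$ I would start from identity (1.3), namely $\beta_S(n)=\sum_{T\subseteq S}(-1)^{|S\setminus T|}\alpha_T(n)$, and add to it the analogous expansion of $\beta_{S\setminus\{s_1\}}(n)$. Every term indexed by a $T$ with $s_1\notin T$ occurs in both sums with opposite sign and cancels, leaving $\beta_S(n)+\beta_{S\setminus\{s_1\}}(n)=\sum_{T\subseteq S,\ s_1\in T}(-1)^{|S\setminus T|}\alpha_T(n)$. Putting $U:=T-s_1$, which runs over all subsets of $S-s_1$ as $T$ runs over subsets of $S$ containing $s_1$, one has $|S\setminus T|=|(S-s_1)\setminus U|$ and, by the first part, $\alpha_T(n)=\alpha_U(n-s_1)\big[\sum_{i\geq 1}h_i\big]\vert_{\deg n}$. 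Since plethysm is additive in its first argument and truncation to degree $n$ is linear, the sum becomes $\big(\sum_{U\subseteq S-s_1}(-1)^{|(S-s_1)\setminus U|}\alpha_U(n-s_1)\big)\big[\sum_{i\geq 1}h_i\big]\vert_{\deg n}$, and the inner sum is $\beta_{S-s_1}(n-s_1)$ by (1.3) applied in $\Pi_{n-s_1}$; the hypothesis $r\geq 2$ guarantees that $S-s_1$ is a nonempty set of nontrivial ranks there.

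Finally, since $\Pi_n$ is Cohen--Macaulay and, as recalled above, rank-selection preserves this property, the order complex of $\Pi_n(S)$ has vanishing reduced homology outside its top dimension $|S|-1$; hence $d_S(n)$ is the unique nonvanishing Betti number and equals $\dim_{\mathbb C}\beta_S(n)$. Taking dimensions in the recurrence just obtained reduces the Betti-number statement to the identity $\dim\big(\alpha_{S-s_1}(n-s_1)\big[\sum_{i\geq 1}h_i\big]\vert_{\deg n}\big)=S(n,n-s_1)\cdot\dim\alpha_{S-s_1}(n-s_1)$, which holds because composing an $S_k$-module with the nonempty-set species and extracting the degree-$n$ part multiplies its dimension by the number $S(n,k)$ of set partitions of $[n]$ into $k$ blocks (here $k=n-s_1$). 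I expect the only real obstacle in the argument to be the first part — verifying the $S_n$-equivariance of the chain decomposition and translating the resulting composite of combinatorial structures into the plethysm with the correct degree truncation; the M\"obius inversion and the passage to dimensions are then routine.
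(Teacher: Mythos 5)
Your argument is correct, and it is a genuinely different route from the one behind the theorem as cited. In the paper the statement is quoted from \cite{Su1}, where (as Sections 2 and 4 of this survey indicate) the three-term recurrence for $\beta_S(n)$ is obtained homologically: one deletes the antichain of rank-$s_1$ elements from $\Pi_n(S)$ and applies the group-equivariant antichain-deletion lemma (the equivariant version of Baclawski's Lemma 4.6), with the upper-interval isomorphism $[x,\hat 1]\cong\Pi_{n-s_1}$ converting the resulting induced module into the plethysm $\beta_{S-s_1}(n-s_1)\big[\sum_{i\geq 1}h_i\big]\vert_{\deg n}$; the chain-level statement for $\alpha_S(n)$ is the easy companion fact. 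You instead prove the $\alpha$-identity first, by exactly that interval isomorphism plus the species-to-plethysm dictionary, and then deduce the $\beta$-recurrence purely formally from Stanley's identity (3), using the cancellation of the terms with $s_1\notin T$ and the observation (used correctly, though worth stating) that every $T\subseteq S$ containing $s_1$ has $\min T=s_1$, so the $\alpha$-formula applies to each such $T$, with the singleton case being the classical $h_{n-s_1}\big[\sum_i h_i\big]\vert_{\deg n}$. This buys elementarity: no equivariant homology lemma is needed, only the rank-selection decomposition (2)--(3), which in turn rests on Cohen--Macaulayness and Hopf--Lefschetz; the antichain-deletion proof, by contrast, works directly in homology and is the tool that extends to the Lefschetz-module and non-Cohen--Macaulay settings of Section 4, which is why the paper emphasizes it. Two small points: in the final dimension count you wrote $\alpha_{S-s_1}(n-s_1)$ where you need the same identity for $\beta_{S-s_1}(n-s_1)$ (harmless, since the fact that plethysm with $\sum_{i\geq 1}h_i$ followed by degree-$n$ truncation multiplies dimensions by $S(n,n-s_1)$ holds for any genuine, indeed any virtual, module of degree $n-s_1$ by linearity over the $h_\lambda$-basis); and the identity you invoke as (1.3) is equation (3) of the paper.
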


The second question raised by Stanley concerns the multiplicity $b_S(n)$ of the trivial representation in the rank-selected homology $\beta_S(n)$ of $\Pi_S$.  Hanlon had shown in \cite{Ha2} that the multiplicity $b_S(n)$ is zero for $S=[1,r], r\geq 1.$ This result is recovered, and more conditions on $S$ for vanishing multiplicity are given in \cite{Su1}, where the action restricted to $S_{n-1}$ is also studied.  The primary tool was Theorem 2.2 above. Hanlon and Hersh \cite{HH}
strengthened and proved some of the results and conjectures of \cite{Su1} using  completely different techniques, spectral sequences of filtered 
complexes and an intricate partitioning of the quotient complex $\Delta(\Pi_n)/S_n$ \cite{He}.  

Let $E_n$ be the $n$th Euler number defined by the generating function $$\sum_{n\geq 0} E_n \frac{x^n}{n!} =\tan x+\sec x.$$ 
($E_{2n-1}$ is the {\it tangent} number, and $E_{2n}$ is the {\it secant} number.)
It is well-known (\cite{St3}) that $E_n$ counts the alternating permutations in $S_n,$ i.e., those permutations $\sigma\in S_n$ such that $\sigma(1)>\sigma(2)<\sigma(3)>\ldots.$   There is a large literature  on the Euler numbers.  Stanley's paper  also contains the following  result (for which he gives a second proof in (\cite{St5}, Theorem 3.4)):

\begin{thm} (\cite{St2}, Theorem 7.7)  The multiplicity of the trivial representation in the 
$S_n$-action $\alpha_n$ on the maximal chains of $\Pi_n$ (which is also the total number of $S_n$-orbits) is the Euler number $E_{n-1}.$ 
\end{thm}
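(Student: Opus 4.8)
The plan is to prove the two assertions separately. The parenthetical one is immediate: since $\alpha_n$ is the permutation representation of $S_n$ on the set $M_n$ of maximal chains of $\Pi_n$, the multiplicity of the trivial representation in $\alpha_n$ equals $\dim_{\mathbb C}\mathbb C[M_n]^{S_n}$, which is the number of $S_n$-orbits on $M_n$. So the real content is that $\Pi_n$ has exactly $E_{n-1}$ maximal chains up to the action of $S_n$, and that is what I would aim at.

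The first step is to encode maximal chains by trees. Given a maximal chain $\hat 0=\pi_0\lessdot\pi_1\lessdot\cdots\lessdot\pi_{n-1}=\hat 1$, the passage from $\pi_{i-1}$ to $\pi_i$ merges two blocks into one; I record the entire merge history as a rooted binary \emph{merge tree} $T$ whose leaves are $1,\dots,n$, with one internal vertex for each merge, the internal vertex created at step $i$ carrying the label $i$ and having as its two (unordered) children the two subtrees or leaves recording the blocks merged at step $i$. Then $T$ is a complete binary tree with $n$ leaves and $n-1$ internal vertices bijectively labelled $1,\dots,n-1$ so that the labels strictly decrease along every root-to-leaf path, and this assignment is a bijection from $M_n$ onto the set of such leaf-labelled trees. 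Since $S_n$ permutes only the leaf labels, I obtain a bijection between $S_n$-orbits on $M_n$ and isomorphism classes of such trees once the leaf labels are erased; write $c_n$ for the number of these classes — call them \emph{decreasing merge trees} on $n$ leaves — so that $c_n$ is the quantity to be computed, with $c_1=1$.

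Next I would pin down $c_n$ via a differential equation for $g(x)=\sum_{n\ge1}c_n\,\tfrac{x^{n-1}}{(n-1)!}$, where $x$ marks internal vertices. Deleting the root (the internal vertex carrying the largest label) of a decreasing merge tree on $n\ge2$ leaves produces an \emph{unordered} pair of decreasing merge trees whose leaf-counts sum to $n$, together with a splitting of the remaining $n-2$ internal labels between the two subtrees (each becoming a decreasing merge tree after order-preserving relabelling); conversely this data, plus a fresh largest label for a new root, rebuilds the tree. An unordered pair of such structures on disjoint label sets has exponential generating function $\tfrac12\big(g(x)^2+1\big)$, the $+1$ correcting for the case in which both subtrees are single leaves, and reinstating the root corresponds to integration. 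Hence $g'=\tfrac12(1+g^2)$ with $g(0)=1$; since $(\tan x+\sec x)'=\tfrac12\big(1+(\tan x+\sec x)^2\big)$ and $\tan 0+\sec 0=1$, we conclude that $g(x)=\tan x+\sec x$, and comparing coefficients with the defining series $\sum_{m\ge0}E_m x^m/m!$ yields $c_n=E_{n-1}$.

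The step that needs genuine care is the passage to \emph{unordered} pairs: one must verify that $\tfrac12(g^2+1)$, rather than $g^2$ (which would give $g=(1-x)^{-1}$ and the count $(n-1)!$ of \emph{ordered} merge trees) or $\tfrac12 g^2$, is the correct generating function, degenerate "cherry" cases included. This is exactly the point at which the structure of $\Pi_n$ enters — a merge of two blocks carries no left/right information — and it is responsible for $\tan+\sec$ appearing in place of a rational function. The other ingredients, namely the bijection with merge trees and the derivative identity for $\tan+\sec$, are routine. One could instead replace the differential equation either by an explicit bijection between decreasing merge trees on $n$ leaves and alternating permutations of $[n-1]$, or by extracting the multiplicity of the trivial representation from the plethystic recursion of Theorem~2.2 after specialising all symmetric functions to a single variable; but the differential equation seems the most economical route.
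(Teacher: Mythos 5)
Your argument is correct, and it is worth noting that the paper itself offers no proof of this statement: it is quoted as Theorem 7.7 of \cite{St2}, with a second proof attributed to \cite{St5}. So the comparison is with Stanley's argument rather than with anything in the text. Your route --- encode a maximal chain by its merge history, observe that erasing leaf labels identifies $S_n$-orbits with leaf-unlabeled binary trees whose internal vertices carry labels $1,\dots,n-1$ decreasing away from the root, and then derive $g'=\tfrac12(1+g^2)$, $g(0)=1$ for the exponential generating function by deleting the root (the maximum label) --- is essentially the classical tangent-secant argument and is very close in spirit to Stanley's own treatment via increasing binary trees, so this is a legitimate self-contained proof rather than a new method. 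The two points you flag as delicate are handled correctly: the trivial-multiplicity-equals-orbit-count remark is the standard fact about permutation characters, and the unordered-pair generating function $\tfrac12(g^2+1)$ is right, since the swap involution on ordered pairs of root subtrees is fixed-point-free except for the pair of two bare leaves (disjoint internal label sets preclude equality otherwise), which accounts exactly for the $+1$. One could obtain the same count from Theorem 3.1 of this paper (the multiplicity of the trivial representation in $\alpha_n$ is $\sum_i a_i(n)$, the number of simsun permutations in $S_{n-2}$), but that result postdates and refines Stanley's, so your direct derivation is the more appropriate proof of the statement as posed.
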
  

As observed by Stanley,  the multiplicity $b_S(n)$ of the trivial representation in the rank-selected homology module $\beta_S(n)$  gives rise to a refinement of $E_{n-1}$ into nonnegative integers indexed by subsets of $[1,n-2].$  

\begin{thm} (\cite{St2})
$$E_{n-1}=\sum_{S\subseteq [1,n-2]} b_S(n).$$ 
\end{thm}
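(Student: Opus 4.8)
The plan is to read off the multiplicity of the trivial representation on both sides of the decomposition (1). First I would record that, since $\Pi_n$ is Cohen--Macaulay and rank-selection preserves the Cohen--Macaulay property, each subposet $\Pi_n(S)$ has reduced homology concentrated in a single degree; hence $\beta_S(n)$ is a genuine (non-virtual) $S_n$-representation and $b_S(n)=\langle\beta_S(n),\mathbf 1\rangle_{S_n}$ is the honest multiplicity of the trivial module, in particular a nonnegative integer. Because the multiplicity of a fixed irreducible is additive over direct sums, applying $\langle\,\cdot\,,\mathbf 1\rangle_{S_n}$ to $\alpha_n=\sum_{S\subseteq[1,n-2]}\beta_S(n)$ yields
\[
\langle\alpha_n,\mathbf 1\rangle_{S_n}=\sum_{S\subseteq[1,n-2]}\langle\beta_S(n),\mathbf 1\rangle_{S_n}=\sum_{S\subseteq[1,n-2]}b_S(n),
\]
and by the previous theorem (\cite{St2}, Theorem 7.7) the left-hand side equals $E_{n-1}$, which is the asserted identity.

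A second route, which also makes the combinatorics transparent, starts from the Hopf--Lefschetz inversion (3). Put $a_T(n):=\langle\alpha_T(n),\mathbf 1\rangle_{S_n}$, the number of $S_n$-orbits on the maximal chains of $\Pi_n(T)$. Applying $\langle\,\cdot\,,\mathbf 1\rangle_{S_n}$ to (3) gives $b_S(n)=\sum_{T\subseteq S}(-1)^{|S-T|}a_T(n)$; summing over all $S\subseteq[1,n-2]$ and interchanging the order of summation, the coefficient of $a_T(n)$ is $\sum_{T\subseteq S\subseteq[1,n-2]}(-1)^{|S-T|}=\sum_{U\subseteq[1,n-2]\setminus T}(-1)^{|U|}$, which vanishes unless $T=[1,n-2]$. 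Hence $\sum_S b_S(n)=a_{[1,n-2]}(n)$, the number of $S_n$-orbits on the maximal chains of $\Pi_n$ itself, and this is $E_{n-1}$ again by Theorem 7.7. This merely reflects that (1) is the case $S=[1,n-2]$ of (2).

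There is no real obstacle once the earlier results of the paper are granted: the statement is exactly the combination of (1) with Stanley's Theorem 7.7, and the nonnegativity that gives the phrase ``refinement of $E_{n-1}$ into nonnegative integers'' its meaning is precisely the Cohen--Macaulayness of the rank-selected posets. The only genuine input hides in Theorem 7.7 itself, i.e.\ in the identification of the number of $S_n$-orbits of maximal chains in $\Pi_n$ with the tangent/secant number $E_{n-1}$; if one wanted a self-contained proof one would encode an orbit of a maximal chain $\hat 0=\pi_0\lessdot\cdots\lessdot\pi_{n-1}=\hat 1$ by recording, at each covering step, which pair of current blocks is merged, and evaluate the resulting count by the exponential formula against the generating function $\sec x+\tan x$. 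Beyond that, only the boundary case $S=\emptyset$ (where $\Delta(\Pi_n(\emptyset))$ is void, $\beta_\emptyset(n)$ is the trivial module and $b_\emptyset(n)=1$) and the routine sign bookkeeping in the second route need to be checked.
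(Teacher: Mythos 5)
Your first route is exactly the argument the paper intends: take the multiplicity of the trivial representation on both sides of the decomposition $\alpha_n=\sum_{S\subseteq[1,n-2]}\beta_S(n)$ and identify $\langle\alpha_n,\mathbf 1\rangle$ with the orbit count $E_{n-1}$ of Theorem 2.3, with Cohen--Macaulayness guaranteeing the $b_S(n)$ are honest nonnegative multiplicities. Your second route via inclusion--exclusion is correct but redundant, since it merely re-derives the same identity; the proposal is correct and matches the paper's approach.
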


Interestingly, there is a second such refinement that occurs naturally when considering the homology representation:  the subsets of $[1,n-2]$ also index a refinement of the succeeding Euler number $E_n$ into  nonnegative integers.  Let $b_S'(n)$ denote the mutiplicity of the trivial representation of $S_{n-1}\times S_1$ in $\beta_S(n).$ Then 

\begin{thm} (\cite{Su1}, Proposition 3.4 (1) and p.269) The multiplicity of the trivial representation of $S_{n-1}\times S_1$ on the maximal chains of $\Pi_n$ is $E_n,$ and hence 
$$E_n=\sum_{S\subseteq [1,n-2]} b_S'(n).$$

\end{thm}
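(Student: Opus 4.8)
The plan is to turn the statement into an orbit count and then into a small bijection with trees. The ``and hence'' clause is immediate from the identity $\alpha_n=\sum_{S\subseteq[1,n-2]}\beta_S(n)$ recalled above: multiplicities are additive over direct sums, so the multiplicity of the trivial representation of $S_{n-1}\times S_1$ in $\alpha_n$ equals $\sum_{S\subseteq[1,n-2]}b_S'(n)$, and it therefore suffices to prove that this multiplicity is $E_n$. For any subgroup $H\le G$, the multiplicity of the trivial representation of $H$ in a permutation representation $\mathbb C[X]$ is the number of $H$-orbits on $X$ (split $\mathbb C[X]$ into orbit summands and apply Frobenius reciprocity). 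Taking $G=S_n$, $H=S_{n-1}\times S_1\cong S_{n-1}$ and $X$ the set of maximal chains of $\Pi_n$, the task becomes: show that $S_{n-1}$ has exactly $E_n$ orbits on the maximal chains of $\Pi_n$.

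To count these I would use the standard encoding of a maximal chain of $\Pi_n$ as a merge tree --- a complete binary tree, each of whose internal vertices has an unordered pair of children, with leaves the elements $1,\dots,n$ and with internal vertices labelled $1,\dots,n-1$ by the step at which the corresponding pair of blocks is merged. Because a block is merged only after the two blocks forming it have been created, each internal vertex gets a strictly larger label than every internal vertex below it, and the root is labelled $n-1$; this is an $S_n$-equivariant bijection between maximal chains of $\Pi_n$ and such ``increasing complete binary trees'', with the group permuting leaf labels. Two maximal chains therefore lie in the same $S_{n-1}$-orbit precisely when their merge trees become isomorphic once the $n-1$ leaves other than $n$ are made indistinguishable, so the $S_{n-1}$-orbits correspond bijectively to increasing complete binary trees with $n-1$ unlabelled leaves and one further marked leaf. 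Write $a_n$ for the number of these marked trees. By Stanley's theorem recalled above, the corresponding count with $k$ unlabelled leaves and no mark is the number of $S_k$-orbits on the maximal chains of $\Pi_k$, namely $E_{k-1}$. The goal reduces to showing $a_n=E_n$.

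I would prove this by an explicit bijection between marked increasing complete binary trees with $n$ leaves and ordinary increasing complete binary trees with $n+1$ leaves, the latter numbering $E_n$ by the previous paragraph. Forward: in a marked tree, delete the marked leaf and install in its place a new internal vertex carrying two new leaves; give this vertex the label $1$ and raise every pre-existing internal label by $1$. The increasing property persists, since the new vertex has the smallest label while the vertex above it now has label at least $2$, and the rest of the labelling is merely translated; the output has $n+1$ leaves and $n$ internal vertices labelled $1,\dots,n$. Backward: in an increasing complete binary tree with $n+1$ leaves the vertex labelled $1$ has the minimum label, hence has no internal child and is therefore a cherry; contract it together with its two leaves to a single leaf, mark that leaf, and lower every remaining internal label by $1$. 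These operations are plainly mutually inverse, so $a_n=E_n$, completing the proof.

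Nearly everything here is routine, and the main obstacle is only bookkeeping: verifying that the merge-tree encoding is an $S_n$-equivariant bijection and that erasing the leaf labels other than $n$ really does parametrise the $S_{n-1}$-orbits, together with the one structural fact that drives the bijection --- that the minimum-labelled internal vertex of an increasing complete binary tree must be a cherry. Should the equivariance checks be unappealing, the identity $a_n=E_n$ can instead be obtained analytically: detaching the root gives $a_n=\sum_{k=1}^{n-1}\binom{n-2}{k-1}a_k E_{n-k-1}$ for $n\ge2$ (with $a_1=1$), where $E_{k-1}$ counts the unmarked factor by Stanley's theorem, and passing to exponential generating functions, with the elementary identity $(\tan x+\sec x)'=\sec x\,(\tan x+\sec x)$, yields $\sum_{n\ge1}a_n\,x^{n-1}/(n-1)! = (\tan x+\sec x)' = \sum_{n\ge1}E_n\,x^{n-1}/(n-1)!$. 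The bijective route is cleaner.
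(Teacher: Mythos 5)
Your argument is correct, and it takes a genuinely different route from the paper's treatment. The paper offers no proof of this theorem: it is quoted from \cite{Su1} (Proposition 3.4(1) and p.~269), where it comes out of the symmetric-function machinery surrounding Theorem 2.2 --- plethystic recurrences for $\alpha_S(n)$ and inner products such as $\langle \alpha_n, h_{n-1}h_1\rangle$, the same point of view that produces the simsun decomposition of Theorem 3.1 and the recursion $\alpha_{\{1\}\cup S}(n)=h_2\frac{\partial}{\partial p_1}\alpha_{S-1}(n-1)$ quoted in Section 5. You instead convert the multiplicity into an orbit count (the multiplicity of the trivial $H$-representation in a permutation module is the number of $H$-orbits), encode maximal chains of $\Pi_n$ equivariantly as increasing merge trees, note that $(S_{n-1}\times S_1)$-orbits are precisely increasing trees with one marked leaf and the remaining leaves indistinguishable, and then give the cherry-insertion/contraction bijection with unmarked increasing trees on $n+1$ leaves, whose number is $E_n$ by Stanley's Theorem 2.3. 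All steps check out: the merge-tree correspondence is the standard $S_n$-equivariant bijection, the minimum-labelled internal vertex is indeed a cherry, and the two maps are mutually inverse; the analytic fallback is also sound, since your recurrence gives $A'(x)=A(x)(\tan x+\sec x)$ with $A(0)=1$, an equation also satisfied by $(\tan x+\sec x)'$, although that verification needs one more line than the single identity you cite parenthetically. What your route buys: an elementary, self-contained proof (modulo Theorem 2.3) and a transparent combinatorial reason why two consecutive Euler numbers appear --- the number of $(S_{n-1}\times S_1)$-orbits on maximal chains of $\Pi_n$ equals the number of $S_{n+1}$-orbits on maximal chains of $\Pi_{n+1}$ --- which speaks directly to Question 2.6; it is also close in spirit to the direct orbit counting in the proof of Theorem 5.3. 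What the cited symmetric-function route buys: it yields the refined multiplicities $b_S'(n)$ for each rank set $S$, relations such as Proposition 2.8, and the restriction data exploited throughout Sections 2 and 3, none of which the bijection by itself provides.
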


Tables of values of $b_S(n)$ and $b_S'(n)$ for $4\leq n\leq 9$ are given in (\cite{Su1}, pp. 286-288).  (The entries for $n=7$ and the 
subset $S=\{2,4,5\}$ appear 
to have been omitted: The missing entries are $b_7(\{2,4,5\})=5$ and $b_7'(\{2,4,5\})=23.$)  

\begin{qn} As far as we know, it is an open problem to describe combinatorially the refinements of Theorems 2.4 and 2.5. 
Is there a way to use more sophisticated topological techniques, by embedding quotient complexes, or using partitionings as in \cite{HH},  that would shed some light on them?  It seems curious that there should 
be two such combinatorial refinements of the Euler numbers.
\end{qn}

An elegant  unified generalisation of Stanley's Theorem 2.3, for each finite root system $R$ and corresponding Coxeter group $W$,  appears in a recent paper \cite{JV} of Josuat-Verg\`es,
who computes  the number $K(W)$ of $W$-orbits on the maximal chains of the intersection lattice $I$ of $R$.  In the particular case $R=A_{n-1},$  one has $I=\Pi_n$ and  $W=S_n,$ and thus by Theorem 2.3, Josuat-Verg\`es' number is 
\begin{equation} K(S_n)=K(\Pi_n)=E_{n-1}=\#\{ S_n\rm{-orbits\ on\ maximal\ chains\ of\ } \Pi_n\}.\end{equation}  
However,  decades earlier, 
Springer \cite{Sp}, in the same general setting,  had also defined and computed an integer $T(W)$ as follows: if $S$ is a set of simple roots for $R$, and $J\subset S,$ let $\sigma(J,S)$ denote the number of elements $w\in W$ such that $w\alpha>0$ for $\alpha\in J$ and $w\alpha<0$ for $\alpha\in S\backslash J.$  Then $T(W)$ is the maximum value of $\sigma(J,S)$ as $J$ ranges over all subsets of $S.$ 
  For $R=A_{n-1},$  Springer showed that $T(W)=E_n,$ and thus it transpires that his number \lq\lq matches" Theorem 2.5:
\begin{equation} T(S_n)=T(\Pi_n)= E_n=\#\{ (S_{n-1}\times S_1)-\rm{orbits\ on\ maximal\ chains\ of\ \Pi_n}\} .\end{equation}  

Thus, the phenomenon of having \textit{two} successive Euler numbers associated to $\Pi_n$  manifests itself in this context; it is the only context known to us other than \cite{Su1}.  
 It seems natural to ask:

\begin{qn} Does  this 
generalise to the rank $n$ root systems $B_n$ and $D_n$? What is the result of   considering orbits of  parabolic subgroups (e.g., the Weyl group of $B_{n-1}$) on the maximal chains of the  intersection lattices ?
\end{qn}

A useful recurrence relating the two families of numbers $b_S(n)$ and $b'_S(n)$ is as follows:

\begin{prop}(\cite{Su1}, Proposition 4.9, p. 284) Let $S$ be a subset of $[1, n-2]$ and suppose $1\notin S.$ Let $S-1$ denote the subset of 
$\{1,\ldots,n-3\}$ obtained by subtracting 1 from each element of $S.$ Then 
$$b_{S\cup\{1\}}(n)+b_S(n)= b_{S-1}'(n-1).$$
\end{prop}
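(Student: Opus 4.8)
The plan is to read the identity off from the three-term plethystic recurrence of Theorem~2.2 by extracting the multiplicity of the trivial representation. Since $1\notin S$, the set $T:=S\cup\{1\}$ has least element $t_1=1$, and then $T\setminus\{t_1\}=S$ while $T-t_1=\{\,s-1:s\in S\,\}=S-1$ in the notation of the proposition. Applying Theorem~2.2 to $T$ (assuming first $S\neq\emptyset$, so that $|T|\geq 2$) gives
\[
\beta_{S\cup\{1\}}(n)+\beta_{S}(n)=\beta_{S-1}(n-1)\big[\textstyle\sum_{i\geq 1}h_i\big]\big\vert_{\deg n}.
\]
When $S=\emptyset$ the set $T$ is the singleton $\{1\}$, to which Theorem~2.2 does not literally apply, but the displayed identity persists with the base convention $\beta_{\emptyset}(k)=h_k$; equivalently $b_{\{1\}}(n)=0$, since the atoms of $\Pi_n$ carry the $S_n$-action on $2$-element subsets, whose only trivial constituent is $h_n$. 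Now pair both sides with $h_n$ in the Hall inner product. As $\langle\gamma,h_n\rangle$ is the multiplicity of the trivial $S_n$-representation in a degree-$n$ symmetric function $\gamma$, the left-hand side becomes $b_{S\cup\{1\}}(n)+b_S(n)$, and we are reduced to the following lemma: for every symmetric function $g$ homogeneous of degree $m\geq 1$,
\[
\big\langle\, g\big[\textstyle\sum_{i\geq 1}h_i\big]\big\vert_{\deg(m+1)}\,,\,h_{m+1}\big\rangle=\langle\, g\,,\,h_{m-1}h_1\,\rangle .
\]
Granting the lemma and applying it with $g=\beta_{S-1}(n-1)$ and $m=n-1$, the right-hand side of the recurrence pairs with $h_n$ to $\langle\beta_{S-1}(n-1),h_{n-2}h_1\rangle$, which is exactly the multiplicity of the trivial representation of $S_{n-2}\times S_1$ in $\beta_{S-1}(n-1)$, namely $b'_{S-1}(n-1)$. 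This is the assertion of the proposition.

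For the lemma, the key is the elementary observation that for any symmetric function $f$ and any degree $d$ the multiplicity $\langle f\vert_{\deg d},h_d\rangle$ of the trivial representation equals $[u^d]\,f[u]$, the coefficient of $u^d$ in the plethystic substitution of $f$ into a single variable $u$ (that is, $p_k\mapsto u^k$): indeed $s_\lambda[u]=u^{|\lambda|}$ when $\lambda$ is a single row and $0$ otherwise, so $f[u]=\sum_d\langle f\vert_{\deg d},h_d\rangle\,u^d$. By associativity of plethysm the left-hand side of the lemma is $[u^{m+1}]\,g\big[(\sum_{i\geq1}h_i)[u]\big]=[u^{m+1}]\,g\!\left[\frac{u}{1-u}\right]$, since $(\sum_{i\geq1}h_i)[u]=\sum_{i\geq1}u^i=\frac{u}{1-u}$. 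Expanding $g=\sum_{\nu\vdash m}z_\nu^{-1}\chi_g(\nu)\,p_\nu$ in the power-sum basis ($\chi_g$ the associated class function) and using $p_\nu[\frac{u}{1-u}]=\prod_j\frac{u^{\nu_j}}{1-u^{\nu_j}}=u^m\prod_j(1-u^{\nu_j})^{-1}$, the coefficient of $u^{m+1}$ is $\sum_{\nu\vdash m}z_\nu^{-1}\chi_g(\nu)\,m_1(\nu)$, where $m_1(\nu)$ denotes the number of parts of $\nu$ equal to $1$. On the other hand $\langle g,h_{m-1}h_1\rangle=\langle h_1^{\perp}g,h_{m-1}\rangle$ since $h_1^{\perp}$ is adjoint to multiplication by $h_1$; and because $h_1^{\perp}=\partial/\partial p_1$ sends $p_\nu$ to $m_1(\nu)\,p_{\nu\setminus\{1\}}$ while $\langle p_\mu,h_{m-1}\rangle=1$ for every $\mu\vdash m-1$, this inner product equals the same sum $\sum_{\nu\vdash m}z_\nu^{-1}\chi_g(\nu)\,m_1(\nu)$. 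The two evaluations agree, which proves the lemma.

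The conceptual content of the lemma is that $g[\sum_{i\geq1}h_i]\vert_{\deg n}$ is the Frobenius characteristic of the $S_n$-module obtained by composing $g$ with the species of nonempty sets, and is supported on the set partitions of $[n]$ into exactly $m=\deg g$ blocks; when $n=m+1$ each such partition has one block of size $2$ and $m-1$ singletons, they form a single $S_{m+1}$-orbit, and the block-permutation group of such a partition is a copy of $S_{m-1}\times S_1$ in $S_m$, so the trivial-representation multiplicity is $\langle g\!\downarrow_{S_{m-1}\times S_1},\mathbf{1}\rangle=\langle g,h_{m-1}h_1\rangle$. One could equally run the entire argument through the permutation modules, starting from the two-term identity $\alpha_{S\cup\{1\}}(n)=\alpha_{S-1}(n-1)[\sum_{i\geq1}h_i]\vert_{\deg n}$ of Theorem~2.2 and the M\"obius inversion expressing $b_S(n)$ through the orbit numbers $a_T(n)=\langle\alpha_T(n),h_n\rangle$; that route reduces the lemma to the transparent fact that $S_n$ acts transitively on the set partitions of $[n]$ into $n-1$ blocks. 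In any case the one step I expect to require genuine care is the lemma — making precise how plethysm by $\sum_{i\geq1}h_i$ interacts with extracting trivial-representation multiplicities in degree $n$ — together with the bookkeeping needed to handle the degenerate case $S=\emptyset$; the rest is an immediate consequence of Theorem~2.2.
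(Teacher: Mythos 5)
Your proposal is correct and follows essentially the route the paper indicates: apply the three-term recurrence of Theorem~2.2 to $S\cup\{1\}$ (whose smallest element is $1$) and extract trivial-representation multiplicities by pairing with $h_n$, your plethystic lemma being the same computation that appears in the paper's proof of Proposition~5.2 in the equivalent form $h_2\,\partial/\partial p_1$ acting on degree $n-1$ and pairing against $h_{n-2}$ (there stated for the $\alpha$'s). Your verification of the lemma via power sums and your handling of the degenerate case $S=\emptyset$ (where $b_{\{1\}}(n)=0$) are both sound.
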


Proposition 2.8 is derived 
from the  three-term recurrence of Theorem 2.2, which completely determines all the rank-selected homology representations.  This recurrence is  intrinsic to the recursive nature of the partition lattice; it arises from the fact that upper intervals are isomorphic to smaller partition lattices.   See \cite{Su1} for details.

\begin{qn}
  Is there  a natural topological explanation, again in terms of quotient complexes, for the recurrence of Proposition 2.8?
\end{qn}

Recall that  $[a,b]$ denotes the subset of consecutive ranks $\{a, a+1,\ldots, b\}.$
Partitionings of the quotient complex of $\Delta(\Pi_n)$ by $S_n$ and $S_{n-1}\times S_1$ were used successfully by Hanlon and Hersh to prove two conjectures of (\cite{Su1}, p. 289): 

\begin{thm} \cite{HH} (Theorems 2.1 and 2.2)  $b_S(n)\neq 0$ if $1\notin S$ or if $S=[1,r]\cup T$ where $T$ is a subset such that $\min T\geq r+2$ and $|T|\geq r.$
\end{thm}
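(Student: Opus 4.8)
The plan is to reformulate the statement homologically and then prove non-vanishing by induction, using the three-term plethystic recurrence of Theorem 2.2 and Proposition 2.8.

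Since the representations here are taken over $\mathbb{C}$, the multiplicity $b_S(n)=\langle\beta_S(n),h_n\rangle$ is the dimension of the space of $S_n$-invariants in the top homology of $\Delta(\Pi_n(S))$, and by transfer this equals $\dim\tilde{H}_{|S|-1}(\Delta(\Pi_n(S))/S_n;\mathbb{C})$. As $\Pi_n$ is Cohen--Macaulay and rank-selection preserves the Cohen--Macaulay property (Stanley, Baclawski), the homology of $\Delta(\Pi_n(S))$, and hence of its quotient by $S_n$, is concentrated in the top degree; therefore $b_S(n)=(-1)^{|S|-1}\tilde{\chi}\big(\Delta(\Pi_n(S))/S_n\big)$, and by the inclusion--exclusion formula $\beta_S(n)=\sum_{T\subseteq S}(-1)^{|S\setminus T|}\alpha_T(n)$ this equals the alternating orbit-count $\sum_{T\subseteq S}(-1)^{|S\setminus T|}a_T(n)$, where $a_T(n)$ is the number of $S_n$-orbits of maximal chains of $\Pi_n(T)$. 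So it suffices to show this count is nonzero under each hypothesis, and since all the Betti numbers involved are nonnegative, ``nonzero'' is the same as ``positive up to the global sign $(-1)^{|S|-1}$''.

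For the case $1\notin S$ I would induct on $|S|$. When $|S|=1$, say $S=\{k\}$ with $2\le k\le n-2$, the complex $\Delta(\Pi_n(\{k\}))$ is just a set of points (one for each rank-$k$ partition), so $b_{\{k\}}(n)$ equals the number of integer partitions of $n$ into $n-k$ parts minus $1$, which is at least $1$ precisely because $k\ge 2$ (for $k=1$ there is only the type $(2,1^{n-2})$, which is why $1\notin S$ is needed). For $|S|\ge 2$, remove $s_1=\min S\ge 2$ using Theorem 2.2 to obtain $b_S(n)+b_{S\setminus\{s_1\}}(n)=\langle\beta_{S-s_1}(n-s_1)\big[\sum_{i\ge1}h_i\big]\vert_{\deg n},\,h_n\rangle$; iterating this relation telescopes $b_S(n)$ into an alternating sum of plethystic ``refinement'' terms $\langle\beta_{S_j-s_j}(n-s_j)[\sum_{i\ge1}h_i]\vert_{\deg n},h_n\rangle$, and the strategy is to show that the leading ($j=1$) term strictly dominates the rest. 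Equivalently, since $1\notin S$, Proposition 2.8 gives $b_S(n)=b'_{S-1}(n-1)-b_{S\cup\{1\}}(n)$, reducing the problem to comparing the $S_{n-1}\times S_1$-invariants attached to $\Pi_{n-1}(S-1)$ with the $S_n$-invariants attached to $\Pi_n(S\cup\{1\})$.

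For the case $S=[1,r]\cup T$ with $\min T\ge r+2$ and $|T|\ge r$, I would induct on $r$, the base $r=0$ being the previous case. For $r\ge1$, removing $s_1=1$ via Theorem 2.2 gives $b_S(n)+b_{[2,r]\cup T}(n)=\langle\beta_{[1,r-1]\cup(T-1)}(n-1)\big[\sum_{i\ge1}h_i\big]\vert_{\deg n},\,h_n\rangle$: the subset $[1,r-1]\cup(T-1)$ has the same shape with parameter $r-1$ and satisfies $|T-1|=|T|\ge r>r-1$, so the inductive hypothesis applies to it, while $b_{[2,r]\cup T}(n)$ is handled by the first case because $1\notin[2,r]\cup T$. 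The role of the hypothesis $|T|\ge r$ is that it supplies exactly the ``surplus'' needed so that, as one unwinds the recursion, the contribution of the high ranks $T$ is never entirely cancelled by Hanlon's vanishing $b_{[1,r]}(n)=0$; consequently the induction must be arranged to track this surplus quantitatively, not merely its sign. This domination-with-a-strict-margin is the main obstacle: every recurrence above carries a subtraction, and the crux of the argument is a lower bound for $\langle f\big[\sum_{i\ge1}h_i\big]\vert_{\deg n},h_n\rangle$ in terms of $f$, valid when $f$ is a genuine character, strong enough to beat the subtracted multiplicity. A cleaner and more robust route --- the one actually carried out by Hanlon and Hersh --- sidesteps the cancellation altogether: one constructs an explicit partitioning of the quotient complexes $\Delta(\Pi_n)/S_n$ and $\Delta(\Pi_n)/(S_{n-1}\times S_1)$ following Hersh's partitioning of the quotient \cite{He}, deduces from it a partitioning of $\Delta(\Pi_n(S))/S_n$, and exhibits a single homology facet --- a maximal-chain orbit each of whose codimension-one faces lies in an earlier block of the partitioning --- in each of the two cases; the combinatorial conditions ``$1\notin S$'' and ``$\min T\ge r+2,\ |T|\ge r$'' are precisely what guarantee that such a facet is available. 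See \cite{HH}.
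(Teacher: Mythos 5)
Your reduction of the problem to positivity of $b_S(n)$, and your base case ($S=\{k\}$ with $2\le k\le n-2$, where the order complex is an antichain of points and $b_{\{k\}}(n)$ is the number of integer partitions of $n$ into $n-k$ parts minus one, hence at least $1$), are correct; but beyond that the argument does not close. Both recurrences you invoke --- Theorem 2.2 with $s_1=\min S$ removed, and Proposition 2.8 in the form $b_S(n)=b'_{S-1}(n-1)-b_{S\cup\{1\}}(n)$ --- express $b_S(n)$ as a \emph{difference} of nonnegative quantities, so knowing that the inductively controlled term is positive proves nothing; what is needed is a strict inequality, e.g.\ $b'_{[1,r-1]\cup(T-1)}(n-1)>b_{[2,r]\cup T}(n)$ in your second case, and you explicitly leave this ``domination with a strict margin'' unproved. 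That is not a minor technicality: it is exactly where the hypotheses $1\notin S$, respectively $\min T\ge r+2$ and $|T|\ge r$, must enter quantitatively (recall $b_{[1,r]}(n)=0$, so without such hypotheses the inequality is simply false), and the survey itself notes that the recurrence-based methods of \cite{Su1} produced only vanishing theorems and conjectures here, which were settled by Hanlon and Hersh with ``completely different techniques.'' So there is no evidence the telescoping/domination scheme can be carried out as you describe, and nothing in your write-up carries it out.

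Your fallback paragraph is not a proof either: it names the ingredients of the actual argument of \cite{HH} (Hersh's partitioning of the quotient complex from \cite{He}, its adaptation to $\Delta(\Pi_n(S))/S_n$ and $\Delta(\Pi_n)/(S_{n-1}\times S_1)$, and the exhibition of a facet all of whose codimension-one faces lie in earlier blocks of the partitioning), but it constructs none of them; the claim that the conditions on $S$ are ``precisely what guarantee that such a facet is available'' is the entire content of Theorems 2.1 and 2.2 of \cite{HH} and is merely asserted. For the comparison you were asked about: the paper under review gives no proof of this statement at all --- it is quoted from \cite{HH}, with the remark that partitionings of quotient complexes were the tool --- so your proposal in the end reduces to the same citation, preceded by an inductive scheme whose crucial inequality is missing. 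As a self-contained proof it has a genuine gap.
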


When $S=[1,r],$ it was shown in (\cite{Su1}, Proposition 4.10 (2)) that $b'_S(n)=1,$ and it was conjectured that this uniquely characterises the first $r$ consecutive ranks  (\cite{Su1}, p. 289), and that $b'_S(n)$ is always nonzero.  This conjecture was also proved by Hanlon and Hersh. 

\begin{thm} \cite{HH} $b_S'(n)>1$ unless  $S=[1,r]$ for some $r$ 
with $2\leq r\leq n-1.$  
\end{thm}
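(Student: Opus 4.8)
The plan is to convert the statement into topology and then reduce it to counting homology facets of a quotient complex, following \cite{He} and \cite{HH}. First, observe that $S_n$ acts on $\Delta(\Pi_n)$ without inverting any simplex: an element carrying a chain to itself must fix each of its terms, since the terms are distinct and linearly ordered. The same holds for the subgroup $S_{n-1}\times S_1$, so the quotient $\Delta(\Pi_n(S))/(S_{n-1}\times S_1)$ is again a simplicial complex, with $k$-simplices the orbits of $k$-chains, and its rational simplicial chain complex is canonically the $(S_{n-1}\times S_1)$-invariant subcomplex of the chain complex of $\Delta(\Pi_n(S))$ over $\mathbb C$. Since passage to invariants is exact over $\mathbb C$, the reduced homology of the quotient is the $(S_{n-1}\times S_1)$-invariant part of $\tilde H_*(\Delta(\Pi_n(S));\mathbb C)$; and because rank-selection preserves Cohen--Macaulayness, the latter is concentrated in degree $|S|-1$, where it equals $\beta_S(n)$. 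Hence
$$b_S'(n)=\dim_{\mathbb C}\tilde H_{|S|-1}\!\left(\Delta(\Pi_n(S))/(S_{n-1}\times S_1);\mathbb C\right),$$
and in particular the quotient complex is pure of dimension $|S|-1$ (as $\Pi_n$ is graded) with rational homology concentrated in the top degree.

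Second, I would set up the combinatorial model of this quotient as in Hersh \cite{He}: a simplex is an orbit, under the stabiliser of the element $n$, of a chain $\pi_{j_1}<\cdots<\pi_{j_k}$ of set partitions of $\{1,\dots,n\}$ of prescribed ranks $j_1<\cdots<j_k$ in $S$, and such an orbit is encoded by the sequence of block-size multisets together with, at each level, a marking of the block containing $n$ and the incidence data recording how marked and unmarked blocks amalgamate as one ascends the chain. Third, and this is the crux, I would construct an explicit partitioning of this pointed quotient complex---a decomposition of its face poset into Boolean intervals $[\,\mathrm{res}(\tau),\tau\,]$ indexed by its facets---by a lexicographic sweep through the orbit-chains, refined by the position of the marked block, directly paralleling the partitionings of $\Delta(\Pi_n)/S_n$ and $\Delta(\Pi_n)/(S_{n-1}\times S_1)$ built in \cite{HH}. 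For a pure complex whose rational homology sits in the top degree, a partitioning forces the top reduced Betti number to equal the number of \emph{homology facets}---those facets $\tau$ with $\mathrm{res}(\tau)=\tau$---so $b_S'(n)$ becomes exactly the number of homology facets of this quotient.

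It then remains to prove the dichotomy: the homology facets number exactly one when $S=[1,r]$ is an initial interval of ranks, and at least two otherwise. For $S=[1,r]$ the equality $b_S'(n)=1$ is (\cite{Su1}, Proposition 4.10(2)), and should be visible directly on the partitioning, so the real content is the strict inequality. For this, note that $S$ fails to be an initial interval precisely when there is a rank $k\ge 2$ with $k\in S$ and $k-1\notin S$; fixing the least such $k$, I would start from the single "canonical" homology facet---the lexicographically extreme orbit-chain, which exists for every $S$ and already reproves non-vanishing---and modify the orbit-chain locally near rank $k$---reversing the order in which two amalgamations occurring at non-consecutive ranks take place, or switching which block absorbs a singleton at rank $k$---so as to produce a second orbit-chain whose restriction set is again the whole facet.

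The main obstacle is the third step together with this last analysis. The pointed quotient complex is in general neither the order complex of a poset nor shellable by any obvious shelling---partitionability is not automatic even for Cohen--Macaulay complexes---so producing a partitioning and then computing the restriction sets $\mathrm{res}(\tau)$ accurately enough to recognise homology facets demands precisely the bookkeeping---tracking how a jump in the block-size data at one rank constrains the data at the adjacent ranks---that the quotient-complex machinery of \cite{He} is built to supply. I expect that once this bookkeeping is in hand, both halves of the dichotomy---uniqueness of the homology facet for initial intervals, and the explicit construction of a second one otherwise---follow fairly quickly; the filtered-complex spectral sequences used in \cite{HH} give an alternative to the "invariants are exact over $\mathbb C$" step but not, I think, an easier route to the combinatorial count itself.
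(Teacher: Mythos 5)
Your overall framing---identify $b'_S(n)$ with the top Betti number of the quotient of the (rank-selected) order complex by $S_{n-1}\times S_1$, and count homology facets of a partitioning---is exactly the route of Hanlon and Hersh \cite{HH}, which is the proof this survey cites rather than reproves. The reduction in your first step is sound over $\mathbb{C}$ (multiplicity of the trivial representation equals the dimension of the invariant subspace, which computes the homology of the quotient), with one correction: the quotient of an order complex by a group of poset automorphisms is in general only a Boolean cell complex, not a simplicial complex, since distinct orbit faces can have the same vertex-orbit set; this is precisely why Hersh's quotient-complex machinery \cite{He} is needed, and the identity equating the top Betti number with the number of facets $\tau$ satisfying $\mathrm{res}(\tau)=\tau$ must be invoked in that setting.

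The genuine gap is that your argument stops where the content of the theorem begins. You never construct the partitioning: partitionability is not automatic (as you yourself note), and neither the decomposition into intervals $[\mathrm{res}(\tau),\tau]$ nor the restriction map is actually defined. Consequently the two claims carrying all the weight---that the lexicographically extreme orbit-chain is a homology facet, and that a local modification near the least $k\in S$ with $k-1\notin S$ yields a second facet with full restriction---are unverifiable as stated, since whether $\mathrm{res}(\tau)=\tau$ for the modified chain depends entirely on the restriction map you have not specified; this ``intricate partitioning'' and its bookkeeping of marked blocks is the Hanlon--Hersh proof. A structural difference worth noting: \cite{HH} partitions the full quotient complex $\Delta(\Pi_n)/(S_{n-1}\times S_1)$ once, using balancedness, and reads $b'_S(n)$ off as a flag $h$-vector entry (the number of facets whose restriction has rank set exactly $S$), rather than building a separate partitioning of each rank-selected quotient as you propose; your per-$S$ plan is workable in principle but multiplies the work while still requiring the same explicit combinatorics you defer. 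Finally, the $S=[1,r]$ case needs no new argument, since $b'_{[1,r]}(n)=1$ is (\cite{Su1}, Proposition 4.10(2)); all of the missing content is the exhibition of two facets with full restriction when $S$ is not an initial segment.
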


We summarise the other known results about $b_n(S)$ and $b_n'(S)$ below.  Some partial results concerning the case of an arbitrary consecutive set of ranks are recorded in (\cite{Su1}, Theorem 4.8), but even this case seems to be difficult in general.

\begin{thm} $b_n(S)=0$ whenever any of the following conditions are met:

\begin{enumerate}
\item[(1)] (\cite{Ha1};\cite{St2}, Proposition 7.8) $S=[1,i],$ $1\leq i\leq n-2.$ 
\item[(2)] (\cite{Su1}, Theorem 4.2 and Remark 4.10.1) $S\supseteq [1, \lfloor\frac{n+1}{2}\rfloor]$
\item[(3)] (\cite{Su1}, Theorem 4.3 and Remark 4.10.1) $S=[1,r]\cup\{a\}$ for $a\notin [{r+2\choose 2}, n-r-1].$ 
\item[(4)] (\cite{HH}, Theorem 2.4);  \cite{Su1}, Theorem 4.7(2) and Remark 4.10.1) $S=[1,r]\backslash\{k\}$ for $k>r/2.$

\end{enumerate}
\end{thm}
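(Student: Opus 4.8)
The plan is to obtain all four vanishing statements from the three-term recurrence of Theorem~2.2 --- which, as noted, determines every rank-selected homology representation --- supplemented by the quotient-complex partitionings of \cite{HH}, \cite{He} where the recursion alone becomes unwieldy. The starting point is that the multiplicity of the trivial representation in an $S_n$-module with Frobenius characteristic $F$ is $\langle F,h_n\rangle$, which for a symmetric function of degree $n$ is also the image of $F$ under the specialization $p_k\mapsto 1$; similarly $b'_S(n)=\langle\beta_S(n),h_{n-1}h_1\rangle$. Since every set $S$ in the statement contains the first rank, applying Theorem~2.2 with $s_1=1$ and then the functional $\langle\,\cdot\,,h_n\rangle$ gives the first-rank recurrence
$$b_S(n)+b_{S\setminus\{1\}}(n)=\left\langle\,\beta_{S-1}(n-1)\big[\sum_{i\ge 1}h_i\big]\vert_{\deg n}\,,\,h_n\right\rangle=b'_{S-1}(n-1),$$
the last equality being Proposition~2.8. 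As an alternative, over $\mathbb{Q}$ the transfer isomorphism $\tilde H_{*}(\Delta(\Pi_n(S)))^{S_n}\cong\tilde H_{*}(\Delta(\Pi_n(S))/S_n)$ (the action on an order complex is automatically regular) identifies $b_S(n)$ with the top Betti number of the quotient complex, which is the object \cite{HH}, \cite{He} analyse.

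\textbf{Part (1) and the base case.} For the full rank set $S=[1,n-2]$, Theorem~2.1 gives $b_{[1,n-2]}(n)=\langle(\mathrm{sgn})\,\psi_n,h_n\rangle=\langle\psi_n,e_n\rangle$, the multiplicity of the sign character in the representation induced from a faithful linear character $\chi$ of a cyclic subgroup $C_n=\langle c\rangle$ generated by an $n$-cycle; by Frobenius reciprocity this equals $\frac1n\sum_{j=0}^{n-1}\big(\omega^{-1}(-1)^{n-1}\big)^{j}$ for a primitive $n$th root of unity $\omega$, and it vanishes for $n\ge 3$ because $(-1)^{n-1}$ has order at most $2$ and so cannot be a primitive $n$th root of unity. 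For a general initial segment $S=[1,i]$, I would feed $b'_{[1,i-1]}(n-1)=1$ (\cite{Su1}, Proposition~4.10(2); cf. Theorem~2.11) into the first-rank recurrence, reducing the claim to $b_{[2,i]}(n)=1$, and then invoke Theorem~2.2 once more (now with $s_1=2$) together with the relevant $S_n$-orbit counts; the classical proofs (\cite{Ha1}; \cite{St2}, Proposition~7.8) instead compute the pertinent M\"obius function directly.

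\textbf{Parts (2)--(4), and the main obstacle.} Each of these sets is $[1,r]$ together with a small perturbation of the remaining ranks, and the plan in each case is to iterate the first-rank recurrence, follow the image of the perturbation in the smaller lattices, and show it is eventually forced into a situation already known to give vanishing: either the perturbed rank leaves the range of nontrivial ranks of the smaller lattice (so a cone appears and the homology is itself zero), or one is reduced to an initial segment, hence to part (1), or to the shape of part (4). The numerical thresholds are precisely the break-even points of this reduction: $\lfloor\frac{n+1}{2}\rfloor$ in (2) is the shortest initial segment that, after the appropriate iteration, produces enough cancellation to annihilate the trivial multiplicity; $\binom{r+2}{2}=1+2+\cdots+(r+1)$ in (3) is the smallest rank at which a single extra coarsening beyond $[1,r]$ can carry $S_n$-orbit data not already accounted for by part (1) or part (4) (while the range $a>n-r-1$ is handled by a complementary argument near $\hat1$); and $k>r/2$ in (4) is exactly the range of single deletions from $[1,r]$ for which the part-(1) argument survives the reduction. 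The genuine difficulty --- and the reason none of these is a one-line consequence of Theorem~2.2 --- is that the sets produced by iterating the recurrence do not in general retain the clean shape of the hypothesis; already the threshold in (2) is not preserved when a single rank is removed. One therefore has to either run a finer induction over a family of rank sets closed under the relevant operations, carrying a vanishing-versus-strict-positivity dichotomy for the multiplicities through Proposition~2.8 and the sign pattern of the homology (the strategy of \cite{Su1}), or pass to the quotient complex $\Delta(\Pi_n(S))/S_n$ and construct, for each family of $S$, an explicit acyclic matching or partitioning forcing the top homology to vanish (the strategy of \cite{HH}, \cite{He}). In both routes no single step is computationally hard; the real work, and the content of the thresholds in the statement, lies in arranging the induction so that the hypotheses propagate.
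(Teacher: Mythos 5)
This theorem is a survey statement: the paper offers no proof of its own, but collects results whose proofs live in the cited references (Hanlon, Stanley, \cite{Su1}, \cite{HH}), and those proofs are substantial (M\"obius function computations, detailed plethystic analysis of the recurrence in \cite{Su1}, and partitionings/spectral sequences for quotient complexes in \cite{HH}). Measured against that, your proposal correctly assembles the right toolkit --- the inner product $\langle\,\cdot\,,h_n\rangle$, the first-rank specialisation of Theorem 2.2 giving Proposition 2.8, the identification of $b_S(n)$ with the top Betti number of $\Delta(\Pi_n(S))/S_n$ over $\mathbb{Q}$ --- and your character computation for the full rank set is correct: $\langle({\rm sgn})\psi_n,h_n\rangle=0$ for $n\geq 3$ does prove the case $S=[1,n-2]$ of (1). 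But that is the only case actually proved. For general $S=[1,i]$ your reduction is circular: given Proposition 2.8 and the cited fact $b'_{[1,i-1]}(n-1)=1$, the claim $b_{[2,i]}(n)=1$ is exactly equivalent to $b_{[1,i]}(n)=0$, so you have restated the theorem rather than reduced it; the promised second application of Theorem 2.2 with $s_1=2$ is never carried out, and there is no indication that it closes.

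For parts (2)--(4) there is no proof at all. The assertions that $\lfloor\frac{n+1}{2}\rfloor$ is ``the shortest initial segment that produces enough cancellation,'' that ${r+2\choose 2}$ is ``the smallest rank at which a single extra coarsening can carry new orbit data,'' and that $k>r/2$ is ``exactly the range for which the part-(1) argument survives'' are unsupported heuristics, not arguments; nothing in the proposal derives these thresholds. You correctly diagnose the obstacle --- iterating the first-rank recurrence does not preserve the shape of the hypothesis, so one needs either a finer induction over a closed family of rank sets (the route of \cite{Su1}) or an explicit partitioning of the quotient complex (the route of \cite{HH}) --- but identifying the obstacle is where the proposal stops, and overcoming it is precisely the content of the cited theorems. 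As it stands, the proposal is a plan plus one correctly executed special case; parts (2), (3), (4) and the general case of (1) remain unproven, so the gap is genuine.
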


\section{Permutation modules in homology}  

Stanley's paper focuses as much on the maximal chains in the partition lattice as on the homology.  In fact, the homology representation on the maximal chains has a very interesting structure, which we now describe.

As before, let $h_n$ be the homogeneous symmetric function of degree $n$.  Thus $h_n$ is the Frobenius characteristic of the trivial representation of $S_n.$ 
See \cite{M} and \cite{St4} for definitions.  Define positive integers $a_i(n)$ by the recurrence 
\begin{equation}
a_i (n+1)=i a_i(n)+ (n-2i+2) a_{i-1}(n)
\end{equation}
with initial conditions $a_0(1)=1=a_1(2), a_0(n)=0, n>1,$ and $a_i(n)=0$ if $2i>n.$   The $a_i(n)$ count the \textit{simsun} permutations in $S_{n-2}$ with $(i-1)$ descents; they were defined combinatorially  in \cite{Su1}. The recurrence (4) arose naturally from the recursive plethystic generating function for the action of $S_n$ on the maximal chains of $\Pi_n,$  which also led to the following more elegant  description of the $S_n$-action on the maximal chains:

\begin{thm} (\cite{Su1}, Theorem 3.2) The $S_n$-action on the  maximal chains of $\Pi_n$ decomposes into orbits as follows:
$$\alpha_n = \sum_{i=1}^{\lfloor\frac{n}{2}\rfloor} a_i(n) W_i,$$ where $W_i$ is the transitive permutation representation of $S_n$ acting on the cosets of the Young subgroup $S_2^i\times S_1^{n-2i}.$ 
In symmetric function terms, the Frobenius characteristic of  $\alpha_n$ is $$ \sum_{i=1}^{\lfloor\frac{n}{2}\rfloor} a_i(n) h_2^i h_1^{n-2i}.$$

\end{thm}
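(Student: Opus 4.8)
The plan is to prove the symmetric-function identity
$$\alpha_n=\sum_{i=1}^{\lfloor n/2\rfloor}a_i(n)\,h_2^i h_1^{n-2i}$$
by induction on $n$ and then to read off the orbit decomposition as a formal consequence. The engine is the plethystic three-term recurrence of Theorem 2.2: taking $S=[1,n-2]$ to be the full set of nontrivial ranks (so that $s_1=1$ and $S-s_1=[1,n-3]$ is the full rank set of $\Pi_{n-1}$), the last formula in Theorem 2.2 becomes
$$\alpha_n=\alpha_{n-1}\Bigl[\,\sum_{j\geq 1}h_j\,\Bigr]\Big|_{\deg n},\qquad n\geq 4,$$
while the base cases $\alpha_2=h_2$ and $\alpha_3=h_2h_1$ are immediate from listing maximal chains and agree with $a_1(2)=a_1(3)=1$. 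Write $g=\sum_{j\geq 1}h_j$ for brevity.

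For the inductive step I use that $f\mapsto f[g]$ is a ring homomorphism, so that from $\alpha_{n-1}=\sum_{k}a_k(n-1)\,h_2^k h_1^{n-1-2k}$ one obtains
$$\alpha_n=\alpha_{n-1}[g]\big|_{\deg n}=\sum_{k}a_k(n-1)\,\bigl(h_2[g]\bigr)^{k}\bigl(h_1[g]\bigr)^{n-1-2k}\Big|_{\deg n}.$$
Here $h_1[g]=g$ has lowest-degree term $h_1$ and next term $h_2$, while $h_2[g]$ has lowest-degree term $h_2$ and next term $h_1h_2$. The last assertion is the only computation with any content: from $h_2=\tfrac12(p_1^2+p_2)$ one has $h_2[g]=\tfrac12\bigl(g^2+p_2[g]\bigr)$, and $p_2[g]$ has homogeneous components only in even degrees, so $h_2[g]|_{\deg 2}=\tfrac12\bigl(h_1^2+p_2\bigr)=h_2$ (using $p_2=2h_2-h_1^2$) and $h_2[g]|_{\deg 3}=\tfrac12\,(g^2)|_{\deg 3}=h_1h_2$.

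Because $\bigl(h_2[g]\bigr)^{k}g^{n-1-2k}\big|_{\deg n}$ sits exactly one degree above the bottom degree $n-1$, it is obtained from the bottom monomial $h_2^k h_1^{n-1-2k}$ by promoting a single factor:
$$\bigl(h_2[g]\bigr)^{k}g^{n-1-2k}\Big|_{\deg n}=k\,h_2^k h_1^{n-2k}+(n-1-2k)\,h_2^{k+1}h_1^{n-2(k+1)},$$
the first term from replacing $h_2$ by $h_1h_2$ in one of the $k$ factors $h_2[g]$, the second from replacing $h_1$ by $h_2$ in one of the $n-1-2k$ factors $g$. Collecting the coefficient of $h_2^i h_1^{n-2i}$ in $\alpha_n$ gives $i\,a_i(n-1)+(n+1-2i)\,a_{i-1}(n-1)$, which is precisely $a_i(n)$ by the defining recurrence (4) evaluated at $n-1$; this closes the induction. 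Since $h_2^i h_1^{n-2i}=h_{(2^i 1^{n-2i})}$ is the Frobenius characteristic of $W_i=\mathrm{Ind}_{S_2^i\times S_1^{n-2i}}^{S_n}\mathbf{1}$, the $h_\lambda$ are linearly independent, and the Frobenius characteristic determines an $S_n$-module up to isomorphism, the displayed identity forces $\alpha_n\cong\bigoplus_i a_i(n)\,W_i$ as permutation modules; in particular the $S_n$-orbits on the maximal chains of $\Pi_n$ are exactly those with stabilizer conjugate to some $S_2^i\times S_1^{n-2i}$, the orbit of type $i$ occurring with multiplicity $a_i(n)$.

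I expect the only real friction to be minor bookkeeping: verifying the base cases, and noting that the hypothesis $r\geq 2$ in Theorem 2.2 causes no trouble here (for $n=4$ the term $\alpha_{[1,1]}(3)$ on the right is simply the permutation module with characteristic $h_2h_1$, namely $\alpha_3$), after which everything reduces to formal symmetric-function manipulation, the single non-formal input being the evaluation $h_2[g]|_{\deg 3}=h_1h_2$. A more conceptual alternative would be to analyze directly the stabilizer in $S_n$ of a maximal chain $\mathfrak c$: peeling off its unique atom $\pi_1$ (which merges a $2$-element block) identifies the upper interval $[\pi_1,\hat 1]$ with $\Pi_{n-1}$, and one checks inductively that every such stabilizer is an elementary abelian $2$-group generated by transpositions with pairwise disjoint supports, hence conjugate to some $S_2^i\times S_1^{n-2i}$; counting the orbits of each type, however, still leads back to the recurrence (4), so this route relocates rather than removes the combinatorial input.
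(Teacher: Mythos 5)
Your induction is correct and is essentially the route behind the cited result: the identity $\alpha_n=\alpha_{n-1}\bigl[\sum_{j\geq 1}h_j\bigr]\big\vert_{\deg n}$ from Theorem 2.2 (equivalently, the derivative form $\alpha_n=h_2\frac{\partial}{\partial p_1}\alpha_{n-1}$ of equation (9)), together with the observation that extracting the degree-$n$ part promotes exactly one factor, reproduces the recurrence (4) for the coefficients of $h_2^i h_1^{n-2i}$; your evaluation $h_2[g]\vert_{\deg 2}=h_2$, $h_2[g]\vert_{\deg 3}=h_1h_2$ and the bookkeeping $i\,a_i(n-1)+(n+1-2i)\,a_{i-1}(n-1)=a_i(n)$ are all right, as are the base cases.

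One caveat: the step ``the displayed identity forces $\alpha_n\cong\bigoplus_i a_i(n)W_i$ as permutation modules, in particular the orbit stabilizers are conjugate to $S_2^i\times S_1^{n-2i}$'' is an overreach. Equality of Frobenius characteristics gives an isomorphism of $S_n$-modules, but in general non-conjugate subgroups can induce the same permutation character, so the character identity alone does not determine the $G$-set structure, and the theorem as stated is an orbit-by-orbit claim. The fix is exactly the stabilizer analysis you relegate to the last paragraph: for a maximal chain, an easy induction up the chain shows that any stabilizing permutation fixes every non-singleton block of every partition in the chain setwise (at each step the newly created non-singleton block cannot be exchanged with any other, since those are already fixed), whence the stabilizer is generated by the transpositions of the doubletons created by merging two singletons, i.e., is precisely a Young subgroup $S_2^i\times S_1^{n-2i}$. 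With that observation the orbit decomposition is immediate, and your symmetric-function computation then identifies the orbit multiplicities as the $a_i(n)$; the two halves of your write-up together constitute a complete proof, but neither half alone does.
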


By extracting the multiplicity of the trivial representation, one obtains a  refinement of the Euler number $E_{n-1}$ via the simsun permutations.  The simsun permutations appear extensively in the literature.  See \cite{Su1} for the original definition, and also \cite{St5} and \cite{St6}.  (Note that the recurrence given in (\cite{St5}, Theorem 3.3) for 
the number $f_k(n)$ of simsun permutations in $S_n$ with $k$ descents is obtained from equation (4) above by means of the substitution $a_i(n)=f_{i-1}(n-2).$ )

Now let $m=2n$ be even, and consider the subposet $\Pi_{2n}^e$ of $\Pi_{2n}$ consisting of partitions with an even number of blocks.  This is the rank-selected poset corresponding to selecting alternate ranks in $\Pi_{2n},$ beginning with rank 2.  Let $R_{2n}$ 
 denote the Frobenius characteristic (see \cite{M}, \cite{St4}]).  
Define integers $\{b_i(n),\ 2\leq i\leq n\},$  
 by means of the recurrence 
 $b_2(n)=1,\ n\geq 2,$ ($b_i(n)=0$ unless $2\leq i\leq n$), 
$$b_i(n)=\sum_{k\geq 0}{2n-2i+k \choose k} 
\sum_{r\geq 1} (-1)^{r-1} {i-k\choose i-2r} b_{i-k}(n-r).$$

The main result of \cite{Su2} states that:

\begin{thm}(\cite{Su2}, Theorem 2.5) $$R_{2n}=\sum_{i=2}^n b_i(n) h_2^i h_1^{2n-2i},$$  
\noindent and thus the character values are nonzero only on the involutions of $S_{2n}.$ 
\end{thm}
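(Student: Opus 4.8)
The plan is to compute $R_{2n}$ from the three-term plethystic recurrence of Theorem~2.2 and then to match the answer against the recurrence defining the $b_i(n)$; throughout, write $g:=\sum_{i\ge1}h_i$. Note $R_{2n}=\beta_S(2n)$ with $S=\{2,4,\dots,2n-2\}$ the set of even nontrivial ranks of $\Pi_{2n}$; its smallest element is $s_1=2$, and $S-2=\{2,4,\dots,2n-4\}$ is exactly the even-rank set of $\Pi_{2n-2}$, so $\beta_{S-2}(2n-2)=R_{2n-2}$ and Theorem~2.2 gives $R_{2n}+\beta_{\{4,6,\dots,2n-2\}}(2n)=R_{2n-2}[g]\big|_{\deg 2n}$. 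The ``gap'' term has the same shape, since subtracting the (even) smallest rank from any set of consecutive even ranks again produces the full even-rank set of a smaller partition lattice; iterating Theorem~2.2 down to the single-rank poset $\Pi_{2n}(\{2n-2\})$ --- whose order complex is an antichain of $2$-block partitions, with $\widetilde H_0$ the permutation module on those partitions minus a trivial summand --- and telescoping the resulting identities yields the self-contained recurrence
$$R_{2n}=\sum_{k=1}^{n-1}(-1)^{k-1}\big(R_{2(n-k)}[g]\big)\big|_{\deg 2n}+(-1)^{n-1}h_{2n},\qquad R_2=h_2.$$
For $n=2$ this reads $h_2[g]\big|_{\deg 4}-h_4=(h_2^2+h_4)-h_4=h_2^2$, matching $R_4=b_2(2)\,h_2^2$ with $b_2(2)=1$; for $n=3$ it gives $R_6=h_2^2h_1^2+2h_2^3$, in agreement with $b_2(3)=1$, $b_3(3)=2$, and with $\dim R_6=360$.

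Next I would set $\widetilde R_{2n}:=\sum_{i=2}^{n}b_i(n)\,h_2^ih_1^{2n-2i}$, with the $b_i(n)$ defined by the recurrence in the statement, and prove $R_{2n}=\widetilde R_{2n}$ by induction on $n$, the base $n=2$ being the check above. Since plethysm is a ring homomorphism in its outer argument,
$$\big(h_2^ah_1^b\big)[g]=\big(h_2[g]\big)^a\,g^b,\qquad h_1[g]=g,\qquad h_2[g]=\tfrac12\big(g^2+g[p_2]\big).$$
Substituting the inductive formula for $R_{2(n-k)}$ into the telescoped recurrence and extracting the coefficient of $h_2^ih_1^{2n-2i}$ in the degree-$2n$ part is then a bookkeeping problem: the multinomial expansion of $g^b$ together with the truncation produces the coefficient $\binom{2n-2i+k}{k}$, the binomial expansion of $h_2[g]^a$ through the $g[p_2]$ summand produces $\binom{i-k}{i-2r}$, and the telescoping index $k$ plays the role of $r$, carrying the sign $(-1)^{r-1}$ and the shift $n-r$ inside $b_{i-k}(n-r)$; arranging this so that the coefficient on the right is exactly the defining sum for $b_i(n)$ completes the induction.

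The hard part is that $h_2[g]$ is \emph{not} a sum of monomials in $h_2$ and $h_1$: already $h_2[g]\big|_{\deg 4}=h_2^2+h_4$, and in higher degrees one sees $h_3^2$, $h_5h_1$, $h_6$, and so on, so each term $\big(h_2^ah_1^b\big)[g]$ of the telescoped recurrence contributes many monomials $h_\mu$ with some part larger than $2$. The crux is to show that in the alternating sum all such ``bad'' monomials cancel, leaving only the $h_2^ih_1^{2n-2i}$ terms; equivalently, identities expressing that $h_{2n}$ together with an explicit signed sum of products of $h_j$'s having some part $>2$ vanishes must hold --- the simplest being $h_6+h_2h_4-h_1h_5-\tfrac12h_3^2-\tfrac12h_3[p_2]=0$ in degree $6$. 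I expect this is cleanest in generating-function form: noting $g=\big(\prod_j(1-x_j)^{-1}\big)-1$ and $g[p_2]=\big(\prod_j(1-x_j^2)^{-1}\big)-1$, assemble $F(t)=\sum_{n\ge1}R_{2n}t^n$; the telescoped recurrence becomes a fixed-point equation for $F$ in which the alternating sign resums to a geometric series, and verifying that $\sum_n\widetilde R_{2n}t^n$ satisfies it reduces the cancellation of the bad monomials to a single plethystic identity, which can be checked on the power-sum basis.

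Finally, the concluding statement is immediate once the formula holds: since $h_1=p_1$ and $h_2=\tfrac12(p_1^2+p_2)$, each $h_2^ih_1^{2n-2i}$, and hence $R_{2n}$, lies in the subalgebra of symmetric functions generated by $p_1$ and $p_2$; therefore the power-sum expansion of $R_{2n}$ is supported on partitions $\lambda\vdash 2n$ all of whose parts are at most $2$, i.e.\ on cycle types of involutions, so the corresponding virtual character of $S_{2n}$ vanishes on every conjugacy class other than those of involutions.
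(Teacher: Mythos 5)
Your reduction is set up correctly: $R_{2n}=\beta_S(2n)$ for the even rank set $S=\{2,4,\dots,2n-2\}$; iterating Theorem 2.2 (with the one-rank case $\Pi_{2n}(\{2n-2\})$ computed directly as the permutation module on two-block partitions minus a trivial summand) does give the telescoped identity $R_{2n}=\sum_{k=1}^{n-1}(-1)^{k-1}R_{2(n-k)}[g]\vert_{\deg 2n}+(-1)^{n-1}h_{2n}$; your $n=2,3$ checks are right; and the closing paragraph (the claimed expansion lies in $\mathbb{Q}[p_1,p_2]$, hence the character vanishes off involutions) is fine. The gap is that the argument stops exactly where the theorem begins. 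What must be shown is (i) that in the alternating sum every $h_\mu$ with a part $\geq 3$ cancels, so that $R_{2n}$ lies in the span of $\{h_2^ih_1^{2n-2i}\}$, and (ii) that the surviving coefficients satisfy precisely the stated recurrence for $b_i(n)$. You defer both to ``bookkeeping'' and to ``a single plethystic identity, which can be checked on the power-sum basis,'' but no such identity is written down, and the indicated mechanism is not convincing in detail: expanding $(h_2[g])^{a}=2^{-a}\bigl(g^2+g[p_2]\bigr)^{a}$ introduces factors $2^{-a}$ and summands $h_j[p_2]$ (already $h_1[p_2]=p_2=2h_2-h_1^2$ is not an $h$-monomial), so the asserted provenance of $\binom{2n-2i+k}{k}$ and $\binom{i-k}{i-2r}$ is speculation; moreover the induction cannot establish (i), since the recurrence does not preserve the span termwise --- the cancellation across the alternating sum is an infinite family of nontrivial symmetric-function identities, of which you verify only the degree-$6$ instance.

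For comparison: this survey does not prove the statement, it quotes it from \cite{Su2}, so there is no in-paper argument to match; but a workable way to close your gap is to observe that the $n+1$ functions $h_2^ih_1^{2n-2i}$, $0\le i\le n$, span exactly the class functions of $S_{2n}$ supported on involutions, so (i) is equivalent to the vanishing of the character of $R_{2n}$ on non-involutions, which can be attacked directly (by Lefschetz/fixed-point or M\"obius computations on $\Pi_{2n}^e$, in the spirit of \cite{SuJer}) rather than by monomial cancellation. Once (i) is available for all smaller $n$ as well, your telescoped recurrence does determine the coefficients inductively, and matching that extraction against the stated formula for $b_i(n)$ becomes a concrete finite computation --- but that computation, or a substitute for it, still has to be carried out; as it stands the proposal is an outline with the central step missing, not a proof.
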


  It was conjectured in \cite{Su2} that the integers $b_i(n)$ are nonnegative; this result was proved by Benjamin Joseph (a student of Stanley) in his thesis.  By analysing the recurrence for the $b_i(n)$ within the framework of a sign-reversing involution, Joseph devises an intricate and ingenious  set of objects enumerated by the $b_i(n).$ 
The objects are cleverly constructed  to be the fixed points of the sign-reversing  involution.

\begin{thm} (\cite{J}, Chapter 4, Section 3, Theorem 8) $b_i(n) $ is a positive integer for all $n\geq i\geq 2.$ 

\end{thm}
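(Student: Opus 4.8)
The plan is to prove, by induction on $n$ (with the explicit base data $b_2(n)=1$ and $b_i(n)=0$ for $i<2$), that $b_i(n)$ is the cardinality of a concretely defined finite family $\mathcal{B}_i(n)$ of decorated combinatorial objects, so that strict positivity is reduced to showing $\mathcal{B}_i(n)\neq\emptyset$. The whole difficulty is the alternating sign $(-1)^{r-1}$ in the recurrence: one cannot read nonnegativity off the formula, and in fact it fails for symmetric-function reasons (the $h_2^i h_1^{2n-2i}$ are not an order-preserving basis), so $R_{2n}$ being a genuine module does not suffice. Instead the right-hand side must be reorganised as the \emph{signed} cardinality of an auxiliary set $\mathcal{S}_i(n)$ and then collapsed by a sign-reversing involution whose fixed points form exactly $\mathcal{B}_i(n)$.

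First I would isolate the $r=1$ summand, $\sum_{k\ge 0}\binom{2n-2i+k}{k}\binom{i-k}{i-2}\,b_{i-k}(n-1)$, as the \emph{positive skeleton}. This says an object of type $(i,n)$ should be a member $\beta\in\mathcal{B}_{i-k}(n-1)$ together with (i) a choice counted by $\binom{2n-2i+k}{k}$ — the bookkeeping governing the $k$ newly created $h_2$-blocks and the accompanying $h_1$-slots as the degree passes from $2n-2$ to $2n$, conveniently recorded as a lattice path or a weakly increasing word — and (ii) an $(i-2)$-element subset of an $(i-k)$-element set. Defining $\mathcal{B}_i(n)$ by iterating this construction down to the base family, where $b_2(m)=1$ is realised by a single canonical object, simultaneously produces, for every $n\ge i\ge 2$, a distinguished element of $\mathcal{B}_i(n)$; this is the element whose survival under the involution will give $b_i(n)\ge 1$.

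Next, to account for the correction terms $r\ge 2$, I would build the full signed set $\mathcal{S}_i(n)=\bigsqcup_{k\ge 0,\,r\ge 1}\mathcal{S}_i(n;k,r)$, where $\mathcal{S}_i(n;k,r)$ consists of a $\beta\in\mathcal{B}_{i-k}(n-r)$ decorated by the data counted by $\binom{2n-2i+k}{k}$ and by $\binom{i-k}{i-2r}$, and assigned the sign $(-1)^{r-1}$, so that $b_i(n)=\sum_{k,r}(-1)^{r-1}\lvert\mathcal{S}_i(n;k,r)\rvert$ by the recurrence. The core construction is a sign-reversing involution $\phi$ on $\mathcal{S}_i(n)$ that toggles $r\leftrightarrow r+1$ by transferring a calibrated block of structure among the $\binom{2n-2i+k}{k}$-part, the $\binom{i-k}{i-2r}$-part, and the recursive object $\beta$ — invoking the inductively built involutions at parameters $(i-k,n-r)$ to resolve the ambiguous cases. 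One then verifies that $\phi$ is an involution, that it reverses sign off its fixed set, and — the crucial point — that $\mathrm{Fix}(\phi)$ is precisely the positive skeleton, i.e. a copy of $\mathcal{B}_i(n)$; combined with the displayed signed count this yields $b_i(n)=\lvert\mathrm{Fix}(\phi)\rvert=\lvert\mathcal{B}_i(n)\rvert\ge 0$, and $\ge 1$ by the distinguished element of the previous paragraph.

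The main obstacle is the design of $\phi$ and the proof of the fixed-point characterisation. A naive pairing of the $r$-layer with the $(r+1)$-layer breaks down because the selection sets $\binom{i-k}{i-2r}$ and $\binom{i-k}{i-2r-2}$ have different sizes, so the involution must simultaneously move elements in and out of the $\binom{2n-2i+k}{k}$-slot and into the recursive structure of $\beta$, and arranging all of these transfers to be mutually reversible while guaranteeing that nothing leaks into or out of the $r=1$ layer except the intended skeleton is exactly the \textit{intricate and ingenious} engineering attributed to \cite{J}. By contrast, the remaining ingredients — the base case $b_2(n)=1$, the nonemptiness of $\mathcal{B}_i(n)$ via the iterated $r=1$ construction, and the routine check that the recurrence matches the signed cardinality of $\mathcal{S}_i(n)$ — are straightforward bookkeeping once the objects and $\phi$ are in hand.
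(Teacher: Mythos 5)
Your outline reproduces exactly the strategy the paper attributes to Joseph's thesis (the paper itself gives no proof of this theorem, only the citation): read the recurrence as a signed enumeration and collapse it by a sign-reversing involution whose fixed points become a new family of objects counted by $b_i(n)$. But your proposal stops precisely where that proof begins. The involution $\phi$ is never constructed; you yourself concede that its design, together with the identification of $\mathrm{Fix}(\phi)$, is ``the intricate and ingenious engineering attributed to \cite{J}.'' Since positivity, the combinatorial interpretation, and the survival of your distinguished element are all conditional on properties of this unspecified map, nothing has actually been proved: what you have written is a restatement of the known plan, not an argument.

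Moreover, the one concrete structural claim you do commit to is false. You define $\mathcal{B}_i(n)$ by hereditarily iterating the $r=1$ layer, so that $|\mathcal{B}_i(n)|=\sum_{k}\binom{2n-2i+k}{k}\binom{i-k}{i-2}|\mathcal{B}_{i-k}(n-1)|$, and you assert that $\mathrm{Fix}(\phi)$ is \emph{precisely} this positive skeleton; that would force the $r\geq 2$ contributions to cancel among themselves. They do not. Take $i=n=4$: the recurrence gives $b_4(4)=3\,b_3(3)+\bigl(b_2(3)-b_2(2)\bigr)=3\cdot 2+(1-1)=6$, because the $(k,r)=(2,2)$ term $-\binom{2}{2}\binom{2}{0}b_2(2)=-1$ cancels the $(k,r)=(2,1)$ term $+\binom{2}{2}\binom{2}{2}b_2(3)=+1$; meanwhile your skeleton count is $3\,|\mathcal{B}_3(3)|+|\mathcal{B}_2(3)|=6+1=7$. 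So the negative layers eat into the $r=1$ layer, $\mathrm{Fix}(\phi)$ must be a proper and as yet unidentified subset of your skeleton, and your nonemptiness argument collapses as well, since the ``distinguished'' skeleton element could a priori be exactly the one that gets paired away. Repairing this --- designing objects and an involution for which the fixed-point set can be described and shown nonempty for all $n\geq i\geq 2$ --- is the actual content of Joseph's theorem, and it is entirely absent from the proposal.
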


Hence, putting together these two results, since $h_2^i h_1^{2n-2i}$ is the Frobenius characteristic of the transitive permutation module induced from the Young subgroup $S_2^i\times S_1^{2n-2i}$, another conjecture  (\cite{Su2}, Conjecture 2.7) is established:

\begin{cor}  The top homology of $\Pi_{2n}^e$ is  a permutation module for $S_{2n}$ whose character values are supported on the set of involutions.
\end{cor}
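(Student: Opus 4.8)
The plan is to obtain the Corollary as an essentially formal consequence of Theorem 3.2 and Theorem 3.3, once one has recorded that the relevant homology is a genuine module living in a single degree. First I would observe that $\Pi_{2n}^e$ is exactly the rank-selected subposet $\Pi_{2n}(S)$ of $\Pi_{2n}$ obtained by selecting the alternate ranks $S=\{2,4,6,\ldots\}$. Since $\Pi_{2n}$ is Cohen--Macaulay and rank-selection preserves the Cohen--Macaulay property (\cite{St1}, \cite{Ba1}), the order complex of $\Pi_{2n}^e$ has reduced homology concentrated in its top dimension. Hence there is a genuine (not merely virtual) $S_{2n}$-module, namely the top homology of $\Pi_{2n}^e$, and by definition its Frobenius characteristic is $R_{2n}$.

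Next I would invoke Theorem 3.2 to write $R_{2n}=\sum_{i=2}^{n} b_i(n)\, h_2^{i} h_1^{2n-2i}$, and note that for each $i$ the symmetric function $h_2^{i} h_1^{2n-2i}$ is the Frobenius characteristic of the transitive permutation module of $S_{2n}$ induced from the trivial representation of the Young subgroup $S_2^{i}\times S_1^{2n-2i}$ (equivalently, the permutation action of $S_{2n}$ on the set of partial matchings of $\{1,\ldots,2n\}$ with exactly $i$ edges). By Theorem 3.3 every coefficient $b_i(n)$ is a nonnegative integer; therefore $R_{2n}$ is a nonnegative-integer combination of characteristics of permutation modules, and the top homology of $\Pi_{2n}^e$ is isomorphic as an $S_{2n}$-module to the direct sum of $b_i(n)$ copies of these induced modules, $2\leq i\leq n$. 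A direct sum of permutation modules is the permutation module on the disjoint union of the underlying $S_{2n}$-sets, so the top homology is a permutation module, as claimed.

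For the support statement I would pass to the power-sum basis. Since $h_1=p_1$ and $h_2=\tfrac12(p_1^2+p_2)$, each term $h_2^{i} h_1^{2n-2i}$, and hence $R_{2n}$, is a polynomial in $p_1$ and $p_2$ alone. Because the coefficient of $p_\lambda$ in the Frobenius characteristic of a class function $\chi$ of $S_{2n}$ equals $z_\lambda^{-1}\chi(\lambda)$, it follows that the character of the top homology vanishes on every conjugacy class whose cycle type is not of the form $2^{a}1^{b}$; that is, the character is supported on the involutions of $S_{2n}$. (This last point is already contained in Theorem 3.2; here I would simply transfer it from the symmetric function to the module.)

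I do not expect any real obstacle in this argument: all of the substance resides in the two cited inputs, the plethystic recursion analysis of \cite{Su2} yielding Theorem 3.2 and Joseph's sign-reversing-involution construction in \cite{J} yielding the positivity in Theorem 3.3. The only point that needs a little care is the distinction between an identity of symmetric functions and a statement about an actual module, which is why the Cohen--Macaulay observation in the first step is essential: it guarantees that "top homology" names a genuine representation, so that the word "permutation module" can be asserted on the nose rather than merely at the level of Frobenius characteristics.
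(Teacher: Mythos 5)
Your argument is correct and is essentially the paper's own: the corollary is deduced exactly as you do, by combining the decomposition $R_{2n}=\sum_{i=2}^n b_i(n)\,h_2^i h_1^{2n-2i}$ of Theorem 3.2 with Joseph's positivity of the $b_i(n)$ (Theorem 3.3) and the fact that $h_2^i h_1^{2n-2i}$ is the Frobenius characteristic of the transitive permutation module induced from $S_2^i\times S_1^{2n-2i}$, the involution-support statement being already contained in Theorem 3.2. Your added remark that Cohen--Macaulayness of the rank-selected poset makes the top homology a genuine module is a sensible explicit justification of a point the paper leaves implicit.
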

Note that by Theorem 3.1, the identical statement is true (mutatis mutandis) for the action of $S_n$ on the maximal chains  of  $\Pi_n.$

\begin{qn}
 Corollary 3.4 is still a somewhat mysterious fact; is there   a more natural topological explanation? 
\end{qn}

Define integers $E_k(n)$ by $$E_k(n)=\sum_{i=2}^n b_i(n) { n-i\choose k-i},\ \  2\leq k\leq n.$$ 
It follows from Joseph's work that the $E_k(n)$ are also nonnegative integers (see \cite{Su2}, Conjecture 3.1).  Their representation-theoretic significance lies in the fact that 
the Frobenius characteristic of the homology of $\Pi_{2n}^e$ can be rewritten as (see \cite{Su2}, Corollary 2.8) 
$$R_{2n}=\sum_{i=2}^n E_i(n)  h_2^i e_2^{n-i}.$$
In this formulation, $E_n(n)$ is the multiplicity of the trivial representation in $R_{2n}.$  But $R_{2n}$ is a submodule of $\alpha_{2n},$ and  the number of $S_{2n}$-orbits in the latter module is $E_{2n-1}$ by Stanley's result.  Thus $E_n(n)$ must count a subset of the alternating permutations enumerated by $E_{2n-1}.$ 

\begin{qn}
Is there a combinatorial description of the integers $E_n(n)$ as a subset of alternating permutations in $S_{2n-1}$?  Is there one that would lead to a similar description for the integers $b_i(n)$? 
\end{qn}

Finally, let $\Pi_{2n}^e(\bar{k})$ denote the subposet of $\Pi_{2n}^e$ obtained by selecting the top $k$ nontrivial ranks.   
 We have one more 
conjecture which generalises Theorem 3.3:

\begin{conj} (\cite{Su2}, Conjecture 2.10) For $1\leq k\leq n-1,$ $S_{2n}$ acts on the homology of $\Pi_{2n}^e(\bar{k})$ as a permutation module 
which can be written as a sum of induced modules, in which the stabilisers are all Young subgroups of $S_{2n}.$  In terms of symmetric functions, the Frobenius characteristic of the homology is 
$h$-positive.

\end{conj}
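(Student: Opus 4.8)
\medskip
\noindent\emph{A strategy for the conjecture.}
The two assertions are equivalent. By the theorem of Stanley \cite{St1} and Baclawski \cite{Ba1} that rank-selection preserves Cohen--Macaulayness, the poset $\Pi_{2n}^e(\bar k)=\Pi_{2n}(S_k)$, where $S_k=\{2n-2k,\,2n-2k+2,\,\ldots,\,2n-2\}$, has reduced homology concentrated in its top degree, so this homology is a genuine $S_{2n}$-module; and a genuine module has $h$-positive Frobenius characteristic $\sum_\lambda c_\lambda h_\lambda$ if and only if it is isomorphic to $\bigoplus_\lambda c_\lambda\,\mathrm{Ind}_{S_\lambda}^{S_{2n}}\mathbf 1$, a sum of transitive permutation modules with Young subgroup stabilisers. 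So it suffices to prove, writing $G_k$ for the Frobenius characteristic of $\tilde H_{k-1}(\Pi_{2n}^e(\bar k))$, that $G_k$ is $h$-positive for $1\le k\le n-1$.

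\medskip
The plan is to run the three-term recurrence of Theorem~2.2 from the \emph{bottom} rank $s_1=2n-2k$ of $S_k$. Since $S_k-s_1=\{2,4,\ldots,2k-2\}$ is exactly the set of alternate ranks of $\Pi_{2k}$ starting at rank $2$, the poset $\Pi_{2k}(S_k-s_1)$ is the full poset $\Pi_{2k}^e$, and $\Pi_{2n}(S_k\setminus\{s_1\})=\Pi_{2n}^e(\overline{k-1})$; hence Theorem~2.2 yields, for $2\le k\le n-1$,
\begin{equation*}
G_k+G_{k-1}=R_{2k}\Big[\sum_{i\ge 1}h_i\Big]\big|_{\deg 2n}=:A_k,
\end{equation*}
with $A_1:=h_2\big[\sum_{i\ge1}h_i\big]\big|_{\deg 2n}$ (the permutation character of $S_{2n}$ on the two-block partitions of $[2n]$) and the convention $G_0=h_{2n}$. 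Now each $A_j$ is already $h$-positive: by Theorems~3.2 and~3.3, $R_{2j}=\sum_{i=2}^{j}b_i(j)\,h_2^{i}h_1^{2j-2i}$ is $h$-positive, so $A_j=\sum_i b_i(j)\,\big(h_2[\sum_\ell h_\ell]\big)^{i}\big(\sum_\ell h_\ell\big)^{2j-2i}\big|_{\deg 2n}$ is a nonnegative combination of products of $h$-positive symmetric functions, once one checks that $h_2\big[\sum_{\ell\ge1}h_\ell\big]$ is $h$-positive. The latter is elementary: its component of even degree $2m$ is $\sum_{c=1}^{m-1}h_ch_{2m-c}+h_2[h_m]$, and expanding $h_2[h_m]=\sum_{j\ge 0}s_{(2m-2j,\,2j)}=\sum_{j\ge 0}\big(h_{2m-2j}h_{2j}-h_{2m-2j+1}h_{2j-1}\big)$ by the two-row Jacobi--Trudi identity, the negative terms $-h_{2m-2j+1}h_{2j-1}$ are cancelled exactly by the odd-index cross terms $h_ch_{2m-c}$ with $c$ odd, leaving a manifestly $h$-positive sum of products of complete homogeneous functions of even degree; the odd-degree components are sums of $h_ch_{m-c}$ and cause no difficulty. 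Thus $A_k$ represents the homology of the $k$-rank-selected poset together with that of the $(k-1)$-rank-selected one, and this combination is $h$-positive even though neither summand need be --- already a small instance of the phenomenon to be proved. Unfolding the recurrence, $G_k=\sum_{j=1}^{k}(-1)^{k-j}A_j+(-1)^k h_{2n}$.

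\medskip
The essential difficulty is that this is an \emph{alternating} sum of $h$-positive pieces, so its $h$-positivity is not formal. Equivalently, one wants to prove by induction on $k$ the pair of statements $0\le_h G_{k-1}\le_h A_k$ (dominance in the $h$-basis), which forces $G_k=A_k-G_{k-1}\ge_h 0$; the base case $G_1=A_1-h_{2n}$ is the coatom computation, which one checks directly yields $G_1=2\sum_{2\le a<n,\ a\ \mathrm{even}}h_ah_{2n-a}+[\,n\ \mathrm{even}\,]\,h_n^2$. I would attack the dominance $G_{k-1}\le_h A_k$ along one of two lines. \emph{(A) Combinatorial:} rewrite $A_k$ and $G_{k-1}$ entirely in the $h$-basis via the Jacobi--Trudi identity above, so that $G_k$ becomes a signed sum of $h$-monomials indexed by ``decorated chains'' in $\Pi_{2n}^e(\bar k)$, and construct a sign-reversing involution on the decorated chains whose fixed points are indexed by cosets of Young subgroups. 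This would extend Joseph's argument \cite{J}, which settled the case $k=n-1$, but is genuinely harder, because for $k<n-1$ the homology is no longer supported on involutions, so the fixed-point set must encode Young subgroups $S_\lambda$ with parts exceeding $2$ --- already for $k=1$ one needs the subgroups $S_a\times S_{2n-a}$. \emph{(B) Topological:} build an $S_{2n}$-equivariant discrete Morse matching, or a partitioning of the order complex in the spirit of \cite{He} and \cite{HH}, on $\Delta(\Pi_{2n}^e(\bar k))$ whose critical cells, grouped into $S_{2n}$-orbits, furnish an explicit homology basis exhibiting the module as a sum of Young permutation modules --- an equivariant, all-isotypic refinement of the Hanlon--Hersh partitionings, which extracted only the trivial-isotypic component. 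In either route the crux is to control the interplay between the ``doubling'' produced by the alternate-rank selection (the plethysm with $h_2$) and the additional refinement imposed by keeping only the top $k$ ranks; a combinatorial model that does this should, in the spirit of Question~3.6, also describe the integers $E_n(n)$ as a set of alternating permutations.
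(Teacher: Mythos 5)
You are being asked about an open conjecture: the paper states it (quoting \cite{Su2}, Conjecture 2.10) and offers no proof --- only the extreme case $k=n-1$ is known, via Theorem 3.3 combined with Joseph's positivity theorem (Theorem 3.4), and the paper's own Question 3.8 merely asks whether a Hanlon--Hersh style partitioning could be adapted to prove the general case. So there is no proof in the paper to compare yours against, and your submission, as you yourself signal, is a strategy rather than a proof.

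The parts you do carry out are correct. The identification $\Pi_{2n}^e(\bar k)=\Pi_{2n}(S_k)$ with $S_k=\{2n-2k,2n-2k+2,\ldots,2n-2\}$, the Cohen--Macaulay concentration of homology, the equivalence (over $\mathbb{C}$) of $h$-positivity with being a direct sum of Young permutation modules, the application of Theorem 2.2 from the bottom rank giving $G_k+G_{k-1}=R_{2k}\bigl[\sum_{i\ge1}h_i\bigr]\big|_{\deg 2n}=A_k$ for $2\le k\le n-1$, the $h$-positivity of each $A_k$ (via Joseph's $b_i(k)\ge 0$ and the $h$-positivity of $h_2\bigl[\sum_{\ell\ge1}h_\ell\bigr]$), and the base value $G_1=2\sum_{a\ \mathrm{even},\,2\le a<n}h_ah_{2n-a}$ plus $h_n^2$ when $n$ is even, all check out. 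But the decisive step --- that $A_k-G_{k-1}$ is $h$-positive for every $k$ --- is exactly the content of the conjecture, and your proposed ``induction on $k$'' gives no leverage: the inductive hypothesis (that $G_{k-1}$ is $h$-positive) has no bearing on the required $h$-dominance of $G_{k-1}$ by $A_k$, so the induction is a restatement, not a reduction. Routes (A) and (B) are reasonable programmes --- (B) is essentially what Question 3.8 of the paper already proposes, and (A) would require extending Joseph's sign-reversing involution beyond the top case, where, as you note, the character support leaves the involutions and genuinely new combinatorics is needed --- but neither is executed, and no new mechanism is supplied at the point where the problem has resisted attack. In short: correct framing and correct preliminary reductions, but the conjecture remains exactly as open as before.
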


In \cite{HH},  Hanlon and Hersh describe a partitioning for the quotient complex of 
$\Delta(\Pi_n)/S_\lambda$ for an arbitrary Young subgroup $S_\lambda.$ 

\begin{qn} Can such a partitioning be adapted to prove the above conjecture, using the  sequence of poset inclusions  
$$\Pi_{2n}^e(\bar{1})\subset \Pi_{2n}^e(\bar{2}) \ldots \subset  \Pi_{2n}^e(\overline{n-1})=\Pi_{2n}^e?$$ 
\end{qn}

Recall from Theorem 3.1 that the Frobenius characteristic of $\alpha_{2n}$ is also a nonnegative integer  combination of the homogeneous symmetric functions $h_2^i h_1^{2n-2i}.$    By Stanley's observation (2.1), $R_{2n}$ is (the Frobenius characteristic of) a submodule of $\alpha_{2n}.$   The data supports  a stronger statement:

\begin{conj} $\alpha_{2n}-R_{2n}$ is a nonnegative integer combination of
  homogeneous symmetric functions $h_\lambda$ indexed by integer partitions $\lambda$ of $n$ with parts equal to 1 or 2.  This is equivalent to the enumerative statement that 
\begin{equation} b_i(n)\leq a_i(2n) {\ \rm for\ all\ } 2\leq i\leq n.
\end{equation}

\end{conj}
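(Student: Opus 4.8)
The plan is to prove the inequality $b_i(n)\le a_i(2n)$ directly from the two recurrences, bypassing the representation-theoretic content entirely. Recall that $R_{2n}$ is a submodule of $\alpha_{2n}$ by Stanley's observation (2.1), and both Frobenius characteristics are $h$-positive in the basis $\{h_2^i h_1^{2n-2i}\}$ by Theorems 3.1 and 3.2. Since the $h_2^i h_1^{2n-2i}$ are linearly independent symmetric functions, the coefficients $b_i(n)$ of $R_{2n}$ and $a_i(2n)$ of $\alpha_{2n}$ are \emph{uniquely determined}, and the containment $R_{2n}\subseteq\alpha_{2n}$ does \emph{not} by itself force a coefficientwise inequality (the difference could in principle be $h$-positive only in a different basis). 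So the task is genuinely combinatorial: show $a_i(2n)-b_i(n)\ge 0$ using the explicit recurrence (4) for the $a_i$ and the alternating-sum recurrence defining the $b_i(n)$.

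First I would record generating-function forms of both families. The recurrence (4) for $a_i(n)$ is classical; iterating the substitution $\alpha_n=\alpha_{S-s_1}(n-s_1)\bigl[\sum_{j\ge1}h_j\bigr]\big|_{\deg n}$ from Theorem 2.2 (for the full rank set) gives a closed plethystic generating function for $\alpha_{2n}$, and extracting the coefficient of $h_2^i h_1^{2n-2i}$ expresses $a_i(2n)$ as a sum over compositions with a transparent sign-free combinatorial model (simsun permutations in $S_{2n-2}$ with $i-1$ descents). For the $b_i(n)$, the defining recurrence
$$b_i(n)=\sum_{k\ge0}\binom{2n-2i+k}{k}\sum_{r\ge1}(-1)^{r-1}\binom{i-k}{i-2r}b_{i-k}(n-r)$$
is manifestly not sign-free, and this is where Joseph's thesis enters: his sign-reversing involution produces an explicit positive model for $b_i(n)$ as fixed points. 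The second step is therefore to take Joseph's combinatorial objects (Theorem 3.3) as the description of $b_i(n)$, so that both sides of (5) are now cardinalities of explicit sets.

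The third and main step is to construct an injection from Joseph's set of objects counted by $b_i(n)$ into the set of simsun permutations in $S_{2n-2}$ with $i-1$ descents counted by $a_i(2n)$. I expect the right strategy is an inductive injection compatible with the two recurrences: one shows that the recurrence for $a_i(2n)$ ``dominates'' the fixed-point recurrence for $b_i(n)$ term by term. Concretely, after using Joseph's involution to replace the alternating inner sum $\sum_r(-1)^{r-1}\binom{i-k}{i-2r}b_{i-k}(n-r)$ by a single nonnegative quantity, say $c_{i-k}(n)$, one is left with $b_i(n)=\sum_k\binom{2n-2i+k}{k}c_{i-k}(n)$, and one must check that the analogous expansion of $a_i(2n)$ obtained by iterating (4) twice (to pass from $2n-1$ and $2n-2$ back down by steps of $2$) has each corresponding summand at least as large; the binomial weights $\binom{2n-2i+k}{k}$ should match up against the coefficients $(n-2i+2)$-type factors appearing when (4) is unwound over an even step, and the induction hypothesis $b_{i-k}(n-r)\le a_{i-k}(2n-2r)$ handles the deeper terms.

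The hard part will be exactly this last matching: controlling the cancellation in the $b_i(n)$ recurrence well enough to compare it, summand by summand, with the sign-free $a_i$ recurrence. Joseph's involution is ``intricate and ingenious,'' so the nonnegative quantity $c_{i-k}(n)$ it produces may not have a clean enough closed form to compare directly with the unwound (4); one may instead need to embed Joseph's fixed-point objects into a larger sign-free family whose cardinality is easier to bound above by $a_i(2n)$, at the cost of a less tight but still sufficient injection. A fallback, if a uniform injection proves elusive, is to prove the weaker statement that $\alpha_{2n}-R_{2n}$ is $h$-positive in the \emph{full} $h$-basis (not just in parts $\le 2$) — which follows more easily from the submodule relation together with $h$-positivity of $R_{2n}$ — and then separately argue that no $h_\lambda$ with a part $\ge 3$ can occur, using that the character of $\alpha_{2n}$ (like that of $R_{2n}$, by Theorem 3.2) is supported on involutions, forcing all $h_\lambda$ appearing in the difference to have parts $\le 2$. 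That route reduces Conjecture 3.10 to the single inequality (5) again, but isolates the genuinely enumerative obstacle cleanly.
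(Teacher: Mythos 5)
There is a genuine gap here, and in fact the statement you were asked to prove is stated in the paper as an open conjecture (Conjecture 3.10): the paper offers no proof, only numerical evidence for $n\le 7$, and it explicitly poses as Question 3.12 exactly the step your plan hinges on, namely an injection from Joseph's objects counted by $b_i(n)$ into the simsun permutations (or some other family) counted by $a_i(2n)$. Your proposal correctly identifies the easy part --- the equivalence of the two formulations follows from Theorems 3.1 and 3.2 together with the linear independence of the functions $h_2^i h_1^{2n-2i}$, and you are right that the submodule containment $R_{2n}\subseteq\alpha_{2n}$ alone does not give a coefficientwise inequality --- but the main step, the term-by-term domination of the $b$-recurrence by the unwound recurrence (4), is only sketched as a hope. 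You do not exhibit the injection, you do not control the cancellation in the alternating recurrence for $b_i(n)$ (you concede that Joseph's fixed-point set may have no usable closed form), and no induction is actually set up and verified. As it stands this is a research plan pointing at the same obstacle the paper identifies, not a proof.

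The fallback route also contains an error. You claim that $h$-positivity of $\alpha_{2n}-R_{2n}$ ``follows more easily from the submodule relation together with $h$-positivity of $R_{2n}$.'' It does not: the submodule relation only gives that the difference is a true module, i.e.\ Schur-positive, and a Schur-positive difference of two $h$-positive symmetric functions need not be $h$-positive. A small example makes this concrete: with $f=h_2h_1^2+h_2^2$ and $g=2h_2^2$, both $h$-positive, the difference $f-g=h_2e_2$ is Schur-positive but not $h$-positive. So the fallback does not establish $h$-positivity in the full $h$-basis, and hence the subsequent involution-support argument (which is fine as far as it goes, since a permutation module $h_\lambda$ with a part $\ge 3$ has nonzero character on some non-involution) has nothing to apply to. The conjecture, and in particular the inequality $b_i(n)\le a_i(2n)$, remains open after your argument.
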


The data for $n\leq 7$ overwhelmingly  verifies the above inequality.  In view of this, one may ask:


 \begin{qn}
Is there a way to explain Theorem 3.2 by isolating specific  chains in $\Pi_{2n}$?  
\end{qn}

\begin{qn}
Is there an injection from Joseph's set of objects counted by the $b_i(n)$ to the simsun permutations in $S_{2n-2}$ with $(i-1)$ descents, or a different set of objects counted by $a_i(2n)$?  
\end{qn}

\begin{qn}Does the topology of the quotient complex $\Delta(\Pi^e_{2n})/S_{2n}$ offer any insight in this special case?  
\end{qn}

For many other  enumerative and completely elementary conjectures (still open, as far as we know) regarding refinements of the simsun numbers $a_i(2n)$ and the Genocchi numbers, see \cite{Su2}.  Tables of values for the $b_i(n)$ appear in \cite{Su3}.  For values of the $a_i(m),\ m\leq 14,$ see (\cite{OEIS}, sequence No. A113897).

\section{Partitions with forbidden blocks: pure and non-shellable}

A central theme of Stanley's paper \cite{St2} is to take a representation-theoretic result and extract interesting enumerative identities from it.  In \cite{Su4} and \cite{Su5},  representation-theoretic results were used to predict the topology of the order complex. 

Our study of the posets described in this section begins with a 
Lefschetz module calculation (see \cite{St2} for definitions), and the discovery that the resulting Frobenius characteristic has interesting properties.  The results of the preceding sections were obtained by using two tools introduced in \cite{Su1} which have proved to be very powerful. The first exploits the acyclicity of Whitney homology (\cite{Su1}, Lemma 1.1).  
Coupled with the innate plethystic nature of the partition lattice, this technique allows one to write down, with relative ease,  generating functions for the Lefschetz module of any subposet of partitions with arbitrary block sizes (\cite{SuJer}).  The considerable machinery of symmetric functions can then be used to analyse the representations, and thereby predict topological properties.  (As a simple example, if the Lefschetz module is {\it not} plus or minus a true module, one concludes immediately that there is homology in more than one degree.) These ideas were applied  successfully in 
\cite{SWa} and \cite{SWe} to the $k$-equal lattice and the $k$-equal  subspace arrangement \cite{BWe}.  The  $k$-equal lattice  served as the motivating example for the theory of nonpure shellability developed by Bj\"orner and Wachs \cite{BW}; their shelling was a crucial ingredient in isolating the homology representations by degree.   The Whitney homology   technique was subsequently generalised  by Wachs to obtain far-reaching and beautiful  results  (see \cite{Wa} for many applications), and combined with shellings and other constructive methods, to refine the resulting plethystic identities for homology by degree.
   
 A second technique in \cite{Su1} proved particularly useful in computing homology  in the case when an antichain is deleted from a poset (rank-selection falls into this category 
by repeated deletion of ranks).  It may be viewed as  a group-equivariant  homology version of (\cite{Ba2}, Lemma 4.6).
The general group-equivariant result appears in (\cite{Su1}, Theorem 1.10 and Remark 1.10.1), while the result for antichains appears explicitly  in 
(\cite{Su4}, Theorem 4.2).   In \cite{Su4} the antichain result was also derived in a more general context using the long exact homology sequence of a pair.  The key step is to determine the 
relative homology of the pair, and can also be obtained from the Homotopy Complementation Formula   
of Bj\"orner and Walker \cite{BWal}. The homological antichain result is crucial to  the  Lefschetz module computations described here, as well as the three-term recurrence (Theorem 2.2) of the preceding section, and once again can be  effectively combined with 
the recursive and plethystic nature of the partition lattice. 

  Recall that the modular elements of the partition lattice (\cite{St3}, Example 3.13.4) are precisely the partitions with a unique non-singleton block.  For $k\geq 2$ let 
$Q_n^k$ be the subposet of $\Pi_n$ consisting of all partitions except 
those with $(n-k)$ blocks of size 1 and one block of size $k,$  and let $P_n^k=\cap_{i=2}^k Q_n^i.$  Thus $Q_n^k$ is obtained by deleting from $\Pi_n$ all the modular elements in which the singleton block has size $k,$ while $P_n^k$ is obtained by removing all modular elements where the singleton block has size $i,$ for $2\leq i\leq k.$   These posets were studied in \cite{Su4}. For $k\geq 3,$ $Q_n^k$ is pure of full rank $n-1,$ but $P_n^k$ is pure of rank $n-2,$ since all the atoms have been deleted.

  Let $\pi_n$ denote the Frobenius characteristic of the $S_n$-module afforded by the homology of $\Pi_n,$ and define the symmetric function $\pi_{n,k}$ by 
$$\pi_{n,k}=\pi_k h_1^{n-k} -\pi_n.$$ 
Using the preceding methods, it was shown  (\cite{Su4}, Theorem 4.2 and Theorem 4.4) that the Lefschetz module  of the order complex of $Q_n^k$ has Frobenius characteristic 
$(-1)^{n-4} \pi_{n,k}.$ 
It is easy to see that $\pi_{n,k}$ is in fact a true representation of $S_n.$  Even more interestingly, the restriction of $\pi_{n,n-1}$ to $S_{n-1}$ is $\pi_{n-1}.$ Hence (with unabashedly wishful thinking)\footnote{ cf. Richard Stanley's talk at his 70th birthday conference.} the representation theory might be said to predict that $Q_n^k$ has homology only in degree $(n-4),$ and that $Q_n^{n-1}$ and $\Pi_{n-1}$ have the same homology, or even the same homotopy type. Miraculously, this is indeed the case.
 
\begin{thm}  (See (\cite{Su4}, Theorem 2.1, Theorem 3.5, Theorems 4.5 - 4.6.)
\begin{enumerate} 
\item   The inclusion map between the $(n-4)$-dimensional order complex of  $P_n^k$ and the $(n-3)$-dimensional order complex of $Q_n^k$ is an $S_n$-equivariant homotopy equivalence. 
\item  There is a  map from  the poset of nonmodular partitions $P_n^{n-1}$ to $\Pi_{n}$  whose image is $\Pi_{n-1},$ and which induces an $(S_{n-1}\times S_1)$-equivariant homotopy  equivalence.
\item   $P_n^k$ is Cohen-Macaulay of rank $(n-2)$, $Q_n^k$ (of rank $n-1$) is not.  The pure poset $Q_n^k$ has unique nonvanishing homology in degree $(n-4),$ one less than the top degree.
\item The action of the symmetric group $S_n$ on the unique nonvanishing homology of each of the posets $P_n^k$ and $Q_n^k$ has Frobenius characteristic 
$$\pi_k h_1^{n-k} -\pi_n.$$

\end{enumerate}
\end{thm}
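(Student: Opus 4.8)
The plan is to reduce the theorem to its two genuinely topological parts, (1) and (2); once these are proved, parts (3) and (4) follow by combining them with the Lefschetz module computation already quoted from \cite{Su4}, which gives the Frobenius characteristic of the Lefschetz module of $Q_n^k$ as $(-1)^{n-4}(\pi_k h_1^{n-k}-\pi_n)$. The whole argument runs by induction on $n$, using two recurring tools: the group-equivariant homology long exact sequence of a pair, together with the Homotopy Complementation Formula of Bj\"orner and Walker \cite{BWal}, to read off the effect of deleting an antichain of modular partitions; and the recursive structure of the partition lattice, which lets the inductive hypothesis re-enter through the proper intervals.

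For (1), I would build $\Delta(Q_n^k)$ up from $\Delta(P_n^k)$ by reinserting the extra elements of $Q_n^k$ --- the modular partitions whose unique non-singleton block $B$ has size $i$ for some $i$ with $2\le i\le k-1$. For each fixed $i$ these elements form an antichain $M_i$ concentrated at a single rank, and I would reinsert $M_{k-1},M_{k-2},\dots,M_2$ one antichain at a time and prove that each reinsertion is an $S_n$-equivariant homotopy equivalence. For $m\in M_i$ the open interval below $m$ is the proper part of $\Pi_i$, and the open interval above $m$, inside the poset reached at that stage, is the proper part of a smaller $Q$-type poset on the $n-i+1$ blocks of $m$; the inductive hypothesis describes both, and feeding them into the complementation formula should give the equivalence. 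I expect the real work to be here: the relative homology of the pair does not vanish term by term, because the join of the two (non-contractible) open intervals carries genuine homology, so the equivalence can only come from a global cancellation. Making that cancellation visible --- through the precise form of the complementation formula, or through an explicit $S_n$-equivariant acyclic matching on the faces of $\Delta(Q_n^k)$ that meet $Q_n^k\setminus P_n^k$, or through a suitable closure operator --- is the crux, and one must also verify the small cases by hand, where the complexes fail to be simply connected. For (2), I would instead produce a single $(S_{n-1}\times S_1)$-equivariant poset map from the nonmodular partition poset $P_n^{n-1}$ onto a copy of $\Pi_{n-1}$, most naturally a closure operator built by singling out the ground-set element $n$, and apply Quillen's fibre lemma after checking that every fibre is contractible; singling out $n$ is exactly what reduces the symmetry to $S_{n-1}\times S_1$.

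Granting (1) and (2), the rest is essentially bookkeeping. Apply the equivariant antichain-deletion exact sequence (\cite{Su1, Su4}) to the single antichain $M_k$ removed from $\Pi_n$: since $\Pi_n$ is Cohen--Macaulay with top homology $\pi_n$ in degree $n-3$, and since the relevant relative homology is concentrated in degree $n-3$ with value $\pi_k h_1^{n-k}$ --- here the upper-interval factor is identified, using the fact recorded above that the restriction to $S_{n-k}$ of the top homology of $\Pi_{n-k+1}$ is the regular representation, as the regular representation of $S_{n-k}$, of Frobenius characteristic $h_1^{n-k}$ --- one gets $\tilde{H}_j(Q_n^k)=0$ for $j<n-4$ and an exact sequence
$$0\longrightarrow \tilde{H}_{n-3}(Q_n^k)\longrightarrow \pi_n \longrightarrow \pi_k h_1^{n-k}\longrightarrow \tilde{H}_{n-4}(Q_n^k)\longrightarrow 0 .$$
But (1) forces $\tilde{H}_{n-3}(Q_n^k)=0$, because $\Delta(Q_n^k)$ is homotopy equivalent to the $(n-4)$-dimensional complex $\Delta(P_n^k)$. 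Hence $Q_n^k$ has its unique nonvanishing homology in degree $n-4$ --- one below the top degree $n-3$ of the pure, rank-$(n-1)$ poset $Q_n^k$, so $Q_n^k$ is not Cohen--Macaulay --- and that homology has Frobenius characteristic $\pi_k h_1^{n-k}-\pi_n$. By the equivariant equivalence of (1), $\Delta(P_n^k)$ has the same homology, now sitting in its own top degree $n-4$; this, together with the purity of $P_n^k$ in rank $n-2$ and the inductively established Cohen--Macaulayness of its proper intervals (which are again of this same general form), shows $P_n^k$ is Cohen--Macaulay. This proves (3) and (4).

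Finally, a couple of checks worth recording along the way: for $k=n-1$ the characteristic $\pi_{n-1}h_1-\pi_n$ is that of the Whitehouse module, matching the known homology of the nonmodular partition poset, and the already-noted identity that the restriction of $\pi_{n,n-1}$ to $S_{n-1}$ equals $\pi_{n-1}$ is exactly the shadow of the $(S_{n-1}\times S_1)$-equivariance delivered by (2); also, when $n-k+1<k$ the interval above an element of $M_k$ is a genuine $\Pi$ rather than a $Q$, so those base cases of the induction (and the equivariance of every map used) must be tracked carefully. Modulo that care, essentially all the effort is in the homotopy equivalences (1) and (2).
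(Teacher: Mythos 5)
Your reduction of (3) and (4) to the two homotopy equivalences is sound and matches the machinery the survey attributes to \cite{Su4}: deleting the antichain $M_k$ of modular elements with block size $k$ from $\Pi_n$, the relative homology of the pair $(\Pi_n,Q_n^k)$ is concentrated in degree $n-3$ and, by Stanley's fact that $\pi_{n-k+1}$ restricts to the regular representation of $S_{n-k}$, carries $\pi_k h_1^{n-k}$; the long exact sequence plus the $(n-4)$-dimensionality of $\Delta(P_n^k)$ then forces homology to sit in degree $n-4$ with characteristic $\pi_k h_1^{n-k}-\pi_n$. This is exactly the antichain-deletion argument of (\cite{Su1}, Theorem 1.10) and (\cite{Su4}, Theorem 4.2), and the survey itself gives no further proof of Theorem 4.1 -- it is quoted from \cite{Su4}. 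Consequently the entire substance of the theorem lies in parts (1) and (2), and these are precisely what your proposal does not prove. (Two smaller points: Cohen--Macaulayness of $P_n^k$ requires all intervals, and its lower intervals are \emph{products} of partition lattices with certain modular elements deleted, not posets literally ``of the same general form,'' so your induction needs a larger class; and non-Cohen--Macaulayness of $Q_n^k$ also needs the observation that $\pi_k h_1^{n-k}-\pi_n\neq 0$.)

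For (1) your plan -- reinsert the antichains $M_{k-1},\dots,M_2$ one at a time and show each reinsertion is an equivalence -- is internally inconsistent as you describe it: you assert that the per-step relative homology (the suspension of $\Delta(P_i^{i-1})$ joined with the upper link) is genuinely nonzero, but if that were so the step could not be a homotopy equivalence, and no ``global cancellation'' can repair a step-by-step strategy whose individual steps fail; what is needed instead is an actual contractibility statement, which you never construct. (Your identification of the upper link is also off: at the stage of reinserting $M_i$ it is $\Pi_{n-i+1}$ with only those modular elements removed whose block \emph{contains} the marked point and has the single size $k-i+1$ -- a pointed variant, not a smaller $Q_m^j$.) The missing idea is a concrete fibre/closure argument done in one step: apply the (equivariant) Quillen fibre lemma to the inclusion $P_n^k\subset Q_n^k$, and for $y$ modular with block $B$ contract the fibre $\{x\in P_n^k: x>y\}$ by the descending idempotent map $x\mapsto x\wedge(B\mid [n]\setminus B)$, whose image has the maximum element $B\mid[n]\setminus B\in P_n^k$ (one checks $x\wedge(B\mid[n]\setminus B)\neq y$ for nonmodular $x$). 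Something of this concreteness -- together with the equivariance verification and the degenerate small cases -- is what \cite{Su4} supplies; the same criticism applies to (2), where ``a closure operator built by singling out $n$'' is named but never defined and its fibres never checked, even though that map (restricting a nonmodular partition to $[n-1]$) and the contractibility of its fibres are the whole content of that part.
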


The representation described by $\pi_{n,k}$ is what was called the generalised Whitehouse module in \cite{Su4}; in the special case $k=n-1,$ it is the Whitehouse module (also named thus in \cite{Su4})  or tree representation of  \cite{RW},  whose restriction to $S_{n-1}$ is  $\pi_{n-1},$ the representation of $S_{n-1}$ on the top homology of the partition lattice $\Pi_{n-1}.$ The generalised Whitehouse module shows up again in \cite{Su5}.  

It was also shown that 

\begin{thm} (\cite{Su4}, Theorem 2.12) For  $k\geq 2,$ the order complex of $Q_n^k$ has the homotopy type of a wedge of spheres of dimension $(n-4).$ 
\end{thm}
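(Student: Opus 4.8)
The plan is to reduce to the Cohen--Macaulay poset $P_n^k$ and then bootstrap from homology to homotopy. By Theorem~4.1(1) the order complex $\Delta(Q_n^k)$ is homotopy equivalent to $\Delta(P_n^k)$, which is pure of dimension $n-4$; hence $\tilde H_{n-4}(Q_n^k)\cong\tilde H_{n-4}(P_n^k)$ is free abelian, being a subgroup of the free abelian group of top-dimensional simplicial chains, while by Theorem~4.1(3) every other reduced homology group of $Q_n^k$ vanishes. Thus $\Delta(Q_n^k)$ has free reduced homology concentrated in the single degree $d=n-4$, and it remains to upgrade this to a wedge of $d$-spheres. For $n=4$ the complex $\Delta(P_n^k)$ is $0$-dimensional and nonempty, hence a wedge of $0$-spheres; for $n=5$ it is a connected graph (connectedness from Cohen--Macaulayness in rank $\ge 1$), hence homotopy equivalent to a wedge of circles. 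So assume $n\ge 6$, where $d=n-4\ge 2$.

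For $n\ge 6$ the real content is to prove that $\Delta(Q_n^k)$ is simply connected; granting that, the Hurewicz theorem gives $\pi_{n-4}(\Delta(Q_n^k))\cong\tilde H_{n-4}(Q_n^k)\cong\mathbb{Z}^{m}$, a map $\bigvee^{m}S^{n-4}\to\Delta(Q_n^k)$ realizing a basis of $\tilde H_{n-4}$ is then a homology isomorphism (both sides having reduced homology only in degree $n-4$), and Whitehead's theorem (a homology isomorphism between simply connected CW complexes is a homotopy equivalence) finishes the argument. To obtain simple connectivity I would build $\Delta(Q_n^k)$ from $\Delta(\Pi_n)$ by deleting, one vertex at a time, the antichain $A$ of modular elements whose unique non-singleton block has size $k$ (this $A$ is an antichain, since two set partitions with distinct non-singleton blocks of equal size are incomparable). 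Deleting a vertex $a$ presents the current complex as the union of the smaller complex with the contractible closed star $\overline{\mathrm{st}}(a)$, meeting along $\mathrm{lk}(a)$; by van Kampen such a deletion alters the fundamental group only by killing the image of $\pi_1(\mathrm{lk}(a))$, so a simply connected link leaves $\pi_1$ unchanged. Since the not-yet-deleted elements of $A$ are incomparable to $a$, the link of $a$ is unaffected by the earlier deletions and equals the join $\Delta(\Pi_{k})\ast\Delta(\Pi_{\,n-k+1})$ of order complexes of smaller partition lattices; because each $\Delta(\Pi_{m})$ is a wedge of spheres of dimension $m-3$ (see, e.g., \cite{Wa}), this join is a wedge of spheres of dimension $(k-3)+(n-k-2)+1=n-4$ (with the usual conventions when a factor degenerates at $k=2$ or $k=n-1$), hence nonempty and, since $n-4\ge 2$, simply connected. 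Therefore every deletion preserves $\pi_1$, and as $\Delta(\Pi_n)$ is itself simply connected for $n\ge 5$ we conclude that $\Delta(Q_n^k)$ is simply connected.

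The main obstacle is precisely this transfer of simple connectivity along the chain of vertex deletions: it rests on identifying $\mathrm{lk}_{\Delta(\Pi_n)}(a)$ with the join $\Delta(\Pi_{k})\ast\Delta(\Pi_{\,n-k+1})$ and on the uniform sphere dimension $n-4$ of that link, and it is exactly what breaks at $n=5$, where those links are wedges of circles---which is why small $n$ must be treated by hand. Everything else is formal: Theorem~4.1 supplies the homology concentration and the freeness of the top homology, and the Hurewicz/Whitehead passage is routine once $\pi_1$ is trivial. One could instead try to read the wedge decomposition directly off the Bj\"orner--Walker Homotopy Complementation Formula \cite{BWal} applied to the antichain $A$, or to exhibit a shelling of $P_n^k$, but the route sketched here needs nothing beyond Theorem~4.1 and the standard topology of the partition lattices $\Pi_m$.
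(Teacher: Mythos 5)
Your proposal is correct and follows essentially the same indirect route as the paper's proof: homology concentration in degree $n-4$ (from the Cohen--Macaulayness of $P_n^k$ and the equivalence with $Q_n^k$) plus simple connectivity, upgraded to a wedge of $(n-4)$-spheres by the standard Hurewicz/Whitehead argument. The only difference is that where the paper establishes simple connectivity by invoking Bouc's deletion lemma (in corrected form), you re-derive exactly that step by deleting the antichain of modular elements one vertex at a time and applying van Kampen, using the same key fact that each link is $\Delta(\Pi_k)\ast\Delta(\Pi_{n-k+1})$, a simply connected wedge of $(n-4)$-spheres for $n\geq 6$.
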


The proof  proceeds somewhat indirectly, by establishing that the order complex is simply-connected using a technical lemma of  Bouc (\cite{Bo} (9, Section 2.2.2, Lemme 6).  Bouc's lemma is misstated in his original paper.  The proof in \cite{Su4} therefore requires a slight modification which appears in (\cite{Su5}, p. 276).

\begin{qn}
 Is there a more direct way to prove Theorem 4.2, i.e., to establish the homotopy type of the posets $Q_n^k$ or $P_n^k$?
\end{qn}

The  restriction of $\pi_{n,k}$ to $S_{n-1}\times S_1$ has Frobenius characteristic equal to   $(n-k)\pi_kh_1^{n-k-1}.$ This leads to 

\begin{qn} (More wishful thinking?) There is an $S_{k}\times (S_{n-k-1}\times S_1)$-module isomorphism   between the homology of the poset $Q_n^k$ 
(or $P_n^k$), and $(n-k)$ copies of the homology of $\Pi_k\times \Pi_{n-k}.$  (Note that  dimensions agree for the order complexes if we consider $P_n^k:$  $(k-3)+(n-k-3)+2=n-4$ .) Can this be explained topologically via a simplicial (poset) map, as it was in Theorem 4.1 for the case $k=n-1$?
\end{qn}

It follows from Theorem 4.1 that the relative homology group of the pair $(\Pi_n, P_n^k)$ vanishes in degrees different from $n-3,$ and, in degree $(n-3),$ equals the direct sum of $\tilde{H}_{n-3}(\Pi_n)$ and $\tilde{H}_{n-4}(P_n^k).$ Hence the relative homology group 
$\tilde{H}_{n-3}(\Pi_n, P_n^k)$ affords the representation 
$\pi_k h_1^{n-k}.$  Note that the relative chain complex of the pair consists of chains passing through a modular element 
with at least $(n-k)$ singletons.

\begin{qn} Does the quotient 
complex $\Delta(\Pi_n)/\Delta(P_n^k)$ have a nice topological description that is consistent with the representation it affords? 
(When $P_n^k$ is replaced by $Q_n^k,$ the answer is provided by 
the Homotopy Complementation Formula of \cite{BWal}.) 
\end{qn}

The posets $Q_n^k$ appear to be related to two other classes of subposets, 
which we now describe.
First let $\Pi_{n, \leq k}$ denote the subposet of $\Pi_n$ consisting of 
partitions with block size at most $k.$  Note that one has natural inclusions 
\begin{equation}
\Pi_{n,\leq 2}\subset \ldots \subset \Pi_{n,\leq k} \subset \Pi_{n,\leq k+1}\ldots \subset \Pi_{n,\leq n-2},
\end{equation}
and that $\Pi_{n,\leq n-2}$ is precisely the poset $Q_n^{n-1}.$ By 
(1) and (2) of Theorem 4.1, this last order complex has the same $S_{n-1}$-homotopy type as that of $\Pi_{n-1},$ and its $S_n$ homology representation is given by the Whitehouse module.

 For $k=2,$ this is the matching complex studied by many authors in different guises (see \cite{Wa} for an extensive list).  Homology and homotopy type were determined by Bouc \cite{Bo}, who also showed that there is torsion.   For arbitrary $k$ these posets were first considered in \cite{BL}, where their M\"obius function was computed; they   
 also arise in certain examples of relative arrangements,
a concept introduced in \cite{Wel}.  
 Welker conjectured that the integral homology of these 
posets is free.  This  was established for certain values of $n:$

\begin{thm} (\cite{Su5}, Theorem 2.8, Theorem 2.10, Corollary 2.15) Let  $k\geq 3.$
\begin{enumerate}
\item For fixed $n,$ when $n<2k+2,$ all the posets $\Pi_{n, \leq k}$  have the same $S_n$-homotopy type as the posets $Q_n^{n-1};$  hence 
the $S_n$-representation on the homology is the Whitehouse module 
(Frobenius characteristic $\pi_{n-1} h_1-\pi_n$).
\item When $n=2k+2,$ the $(2k-2)$-dimensional order complex of $\Pi_{2k+2, \leq k}$ is homotopy equivalent to a wedge of 
$(2k-3)$-dimensional spheres.
\item When $n=3k+2,$ the $(3k-3)$-dimensional order complex of $\Pi_{3k+2, \leq k}$ is homotopy equivalent to a wedge of 
$(3k-4)$-dimensional spheres.
\end{enumerate}
\end{thm}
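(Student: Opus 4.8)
\medskip
\noindent\textbf{A proof proposal.}

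\smallskip
\noindent\emph{Part (1).}
The plan is to climb the chain of $S_n$-equivariant inclusions
$$\Delta(\Pi_{n,\leq k})\subseteq\Delta(\Pi_{n,\leq k+1})\subseteq\cdots\subseteq\Delta(\Pi_{n,\leq n-2})=\Delta(Q_n^{n-1})$$
and show that, when $n<2k+2$, every inclusion is an $S_n$-homotopy equivalence; the endpoint is then handled by parts (1)--(2) of Theorem 4.1, which give $\Delta(Q_n^{n-1})\simeq\Delta(\Pi_{n-1})$ ($S_{n-1}$-equivariantly) and $S_n$-homology with Frobenius characteristic $\pi_{n-1}h_1-\pi_n$. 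Fix $j$ with $k\leq j\leq n-3$. Since $n<2(j+1)$, no partition of $[n]$ has two blocks of size $\geq j+1$, so the set $M_{j+1}$ of elements of $\Pi_{n,\leq j+1}$ not lying in $\Pi_{n,\leq j}$ consists precisely of the partitions with a unique block $B$ of size $j+1$. If such a $\sigma$ has at least two blocks besides $B$, the subposet of $\Pi_{n,\leq j+1}$ strictly above $\sigma$ fixes $B$ and only coarsens $[n]\setminus B$, hence has a maximum $\tau_B:=\{B,\,[n]\setminus B\}$ and is contractible; so the link of $\sigma$ in $\Delta(\Pi_{n,\leq j+1})$ is a join with a contractible factor, hence contractible, and $\sigma$ (with its whole $S_n$-orbit) may be deleted without changing the homotopy type. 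Since these deletions leave the relevant up-sets undisturbed, one is reduced to $\Pi_{n,\leq j}\cup A$, with $A=\{\tau_B:\,|B|=j+1\}$ an $S_n$-stable antichain of maximal elements. For $\tau_B\in A$ the link in $\Delta(\Pi_{n,\leq j}\cup A)$ is the order complex of $\bigl(\Pi(B)\setminus\{\hat 1_B\}\bigr)\times\Pi([n]\setminus B)$ with its bottom element removed; projecting onto the first factor, every fibre over a principal down-set has a maximum, so the equivariant Quillen fibre lemma identifies this link, up to equivariant homotopy, with $\Delta\bigl(\Pi(B)\setminus\{\hat 1_B\}\bigr)$, which is a cone over the all-singletons partition of $B$, hence contractible. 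So each $\tau_B$ may also be deleted, giving $\Delta(\Pi_{n,\leq j})\simeq\Delta(\Pi_{n,\leq j+1})$ $S_n$-equivariantly. Iterating in $j$ proves (1).

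\smallskip
\noindent\emph{Parts (2) and (3).}
Here I would follow the paper's recurring theme: first let the representations predict the topology, then supply it. Using the acyclicity of Whitney homology (\cite{Su1}, Lemma 1.1) and the plethystic recursion for partition posets, one computes the Lefschetz module (equivariant Euler characteristic) of $\Delta(\Pi_{2k+2,\leq k})$ and of $\Delta(\Pi_{3k+2,\leq k})$ and checks it equals $(-1)^{d}$ times a genuine $S_n$-module, where $d$ is one below the (pure) dimension of the complex, i.e.\ $d=2k-3$, resp.\ $d=3k-4$. To turn this into a homotopy statement one needs (a) simple connectivity --- obtained exactly as for $Q_n^k$ in Theorem 4.2, via Bouc's lemma in the corrected form of \cite{Su5} --- and (b) that the integral homology is free and concentrated in degree $d$; Hurewicz and Whitehead then give a wedge of $d$-spheres. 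For (b) I would re-use the filtration of part (1). When $n=2k+2$, part (1) (applied with block-size bound $k+1$) shows $\Delta(\Pi_{2k+2,\leq k+1})$ has free homology concentrated in degree $2k-2$, and only the bottom step, from $\Pi_{n,\leq k+1}$ down to $\Pi_{n,\leq k}$, is new, since there $2(k+1)=n$ and new elements may have \emph{two} blocks of size $k+1$. One computes the relative homology of $\bigl(\Delta(\Pi_{n,\leq k+1}),\,\Delta(\Pi_{n,\leq k})\bigr)$: after removing the new elements with a unique big block (whose links are contractible, as in part (1)), it is a direct sum, over the balanced partitions $\{B,[n]\setminus B\}$, of shifted links --- each of which, by a Mayer--Vietoris computation that collapses to $\tilde H_{k-2}(\Pi_{k+1})\otimes\tilde H_{k-2}(\Pi_{k+1})$, is free and concentrated in degree $2k-3$ --- so the relative homology is free and concentrated in degree $2k-2$. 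The long exact sequence of the pair then kills $\tilde H_i(\Pi_{n,\leq k})$ for $i\leq 2k-4$ and leaves a short exact sequence in degrees $2k-3,\,2k-2$; a further vanishing argument for the top homology $\tilde H_{2k-2}(\Pi_{n,\leq k})$ --- e.g.\ that it injects, via the inclusion into the one-dimension-larger complex $\Delta(\Pi_{n,\leq k+1})$, by a map sending every top cycle to a boundary, or (in the spirit of Theorem 4.1) by exhibiting a Cohen--Macaulay subposet of dimension $2k-3$ of the same homotopy type --- then confines the homology to degree $2k-3$, after which the Lefschetz computation pins down its character. For $n=3k+2$ the scheme is the same, except that several consecutive filtration steps (with block-size bound from $k$ up to about $\lfloor 3k/2\rfloor$) lie outside part (1); the saving feature is that $3(j+1)>3k+2$ throughout, so no new element ever carries three blocks of size $j+1$, and the relative homology at each stage stays confined to two adjacent degrees, so the long-exact-sequence bookkeeping can be iterated down the filtration.

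\smallskip
\noindent\emph{The main obstacle} is step (b): keeping the homology introduced at every filtration step inside a band of two adjacent degrees, and killing the top homology of $\Pi_{n,\leq k}$. This succeeds for $n=2k+2$ and $n=3k+2$ exactly because the new elements then have at most two blocks near the forbidden size, so their star/link contributions land in two adjacent degrees and the long exact sequence cannot diffuse homology; already for $n=mk+2$ with $m\geq4$ three or more such blocks appear, the relative terms spread over many degrees, and the method gives no control --- which is precisely why the theorem stops at $n=3k+2$, and why Welker's integral-freeness conjecture remains open in general.
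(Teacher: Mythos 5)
You should first note that this survey does not itself prove the theorem: it is quoted from \cite{Su5} (Theorems 2.8, 2.10, Corollary 2.15), and the text around it tells you how those proofs go --- the order complexes are first shown to be simply connected (via Bouc's lemma, in the corrected form on p.~276 of \cite{Su5}), homology is then pinned to a single degree using the Lefschetz-module/Whitney-homology and antichain-deletion machinery, and for $n=2k+2$ the key input is an $S_{2k+1}$-isomorphism in homology with the order complex of $Q_{2k+1}^{k+1}$. Your part (1) is essentially sound and complete as a direct argument: for $k\le j\le n-3$ and $n<2(j+1)$ the new elements of $\Pi_{n,\le j+1}$ have a unique block $B$ of size $j+1$; those with at least two further blocks have up-set with maximum $\tau_B=\{B,[n]\setminus B\}$ (note $|[n]\setminus B|\le j$ exactly because $n<2j+2$), hence contractible links, and the remaining $\tau_B$ are maximal with down-set $\bigl((\Pi(B)\setminus\{\hat 1_B\})\times\Pi([n]\setminus B)\bigr)\setminus\{\hat 0\}$, which is conically contractible via $x\mapsto x\vee(\hat 0_B,\hat 1_{B^c})$ (a cleaner route than your fibre argument, though yours also works). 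Since orbits are antichains, the equivariant deletion lemma ((\cite{Su1}, Theorem~1.10), (\cite{Su4}, Theorem~4.2)) gives the $S_n$-homotopy equivalences, and Theorem~4.1 finishes part (1).

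For parts (2) and (3), however, there is a genuine gap, and you have in fact located it yourself. Your filtration correctly reduces the pair $(\Delta(\Pi_{2k+2,\le k+1}),\Delta(\Pi_{2k+2,\le k}))$ to the balanced maximal elements $\tau_B$ with $|B|=|B^c|=k+1$, whose links have free homology $\tilde H_{k-2}(\Pi_{k+1})\otimes\tilde H_{k-2}(\Pi_{k+1})$ in degree $2k-3$, so the long exact sequence kills $\tilde H_i$ for $i\le 2k-4$. But the two statements you still need --- that $\tilde H_{2k-2}(\Pi_{2k+2,\le k})=0$ and that $\tilde H_{2k-3}$ is torsion-free --- are exactly the content of the theorem and are not supplied. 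The suggestion that the top homology ``injects by a map sending every top cycle to a boundary'' is not an argument (you would have to prove both injectivity and triviality of that map, and the second is the whole difficulty), and ``exhibiting a Cohen--Macaulay subposet of dimension $2k-3$ of the same homotopy type'' is precisely what is to be constructed; moreover, even granting top-degree vanishing, $\tilde H_{2k-3}$ is only exhibited as a quotient of a free group, so freeness (needed for the wedge-of-spheres conclusion, even with simple connectivity) does not follow. A Lefschetz-module computation being $(-1)^d$ times a genuine module is, as the survey itself stresses, only a prediction, not a concentration statement. For $n=3k+2$ the sketch is weaker still: several filtration steps admit two blocks of the critical size sitting below non-maximal new elements, the degrees of the relative terms drift with $j$, and the ``two adjacent degrees'' claim is asserted rather than proved; the same top-degree and torsion issues recur at every stage. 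The cited proofs in \cite{Su5} close these gaps by a different route (restriction to $S_{2k+1}$ and comparison with $Q_{2k+1}^{k+1}$, plus simple connectivity), which your proposal does not reproduce.
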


The case $n=2k+2$ deserves special mention.  The homotopy equivalence was again proved indirectly.  Interestingly, 
there is an $S_{2k+1}$-isomorphism in homology with the order complex of $Q_{2k+1}^{k+1}.$ 

\begin{qn}
Is there  a natural $S_{2k+1}$-homotopy equivalence between (the order complexes of) 
$\Pi_{2k+2, \leq k}$ and $Q_{2k+1}^{k+1}$?  What, if anything,  does restricting block sizes have to do with removing modular elements?
\end{qn}

Now let $\Pi_{n, \neq k}$ denote 
the poset of partitions with no block of size $k.$   (Again the M\"obius function  for these posets was computed in \cite{BL}.) 
Observe that $\Pi_{n, \neq 2}$ is the 3-equal lattice of \cite{BWe}.   Bj\"orner and Welker determined the homotopy type of the  $k$-equal lattice to be a wedge of spheres of varying dimensions, while the  homology representations were determined in \cite{SWa}.  
Our original motivation for the study of both $\Pi_{n, \neq k}$ and $\Pi_{n, \leq k}$ arose from Lefschetz module calculations appearing in \cite{SuJer}. 

\begin{thm} (\cite{Su5}, Theorem 4.3, Theorem 4.8) Assume that $k\geq 3.$
 For $n\leq 2k$ the posets  $\Pi_{n, \neq k}$
 and the poset $Q_n^k$ have homotopy equivalent 
order complexes; when $n<2k$ they have  isomorphic $S_n$-homology modules, namely, the generalised Whitehouse module described in Theorem 4.1 (4) above.

\end{thm}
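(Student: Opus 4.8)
The plan is to show that the inclusion $\Pi_{n,\neq k}\hookrightarrow Q_n^k$ --- which is legitimate because forbidding all blocks of size $k$ removes, in particular, the $\binom nk$ modular partitions deleted in forming $Q_n^k$ --- induces a homotopy equivalence of order complexes when $n\leq 2k$, and then to transfer the already-known homology of $Q_n^k$. The first task is to understand the difference set $X:=Q_n^k\setminus\Pi_{n,\neq k}$: its elements are the partitions possessing a block of size $k$ that are not one of the deleted modular partitions. When $n<2k$ such a partition has a \emph{unique} block $B$ of size $k$, so that $X=\bigsqcup_{|B|=k}X_B$ with $X_B=\{\,\{B\}\sqcup\tau:\hat 0<\tau\in\Pi([n]\setminus B)\,\}$; each $X_B$ is isomorphic to $\Pi_{n-k}$ with its bottom removed, hence is contractible (it has a maximum), and inside the interval $[\hat 0,\ \{B\}\sqcup\{[n]\setminus B\}]\cong\Pi(B)\times\Pi([n]\setminus B)$ it is exactly the fibre $\{\hat 1_B\}\times(\Pi([n]\setminus B)\setminus\{\hat 0\})$ over the top of the first factor. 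The equality $n=2k$ additionally puts the partitions into two blocks of size $k$ into $X$, forcing a choice of which block to call $B$; this is why the equivariant homology statement is confined to $n<2k$. I would flag at the outset that there is no poset retraction, closure operator, or interior operator of $Q_n^k$ onto $\Pi_{n,\neq k}$ --- the meet in $\Pi_n$ of the minimal coarsenings of $\{B\}\sqcup\tau\in X$ leaving $X$ is $\{B\}\sqcup\tau$ itself --- so the homotopy equivalence must be obtained by genuinely topological means.

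For the homology I would use the long exact sequence of the pair $(\Delta(\hat Q_n^k),\Delta(\hat\Pi_{n,\neq k}))$, in the same spirit as the antichain computations of \cite{Su4}, assisted where convenient by the Homotopy Complementation Formula of Bj\"orner--Walker \cite{BWal}. The relative complex is supported on the chains of $\hat Q_n^k$ meeting $X$; using the disjoint splitting $X=\bigsqcup_B X_B$, the product structure of $\Pi(B)\times\Pi([n]\setminus B)$ around each piece, and the contractibility of $X_B$, the goal is to show $\tilde H_*(\Delta(\hat Q_n^k),\Delta(\hat\Pi_{n,\neq k}))=0$ when $n<2k$, so that $\tilde H_*(\Pi_{n,\neq k})\cong\tilde H_*(Q_n^k)$ as $S_n$-modules. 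As an independent check, the Lefschetz (virtual homology) module of $\Pi_{n,\neq k}$ can be written down from the acyclicity of its Whitney homology and the plethystic generating function for partitions avoiding a block size (the computation underlying \cite{SuJer}), and one verifies it equals $(-1)^{n-4}(\pi_kh_1^{n-k}-\pi_n)$. Since Theorem 4.1 gives the homology of $Q_n^k$ as free, concentrated in degree $n-4$, with Frobenius characteristic $\pi_kh_1^{n-k}-\pi_n$ (the generalised Whitehouse module), the same holds for $\Pi_{n,\neq k}$ when $n<2k$.

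To upgrade the homology isomorphism to a homotopy equivalence I would show that $\Delta(\hat\Pi_{n,\neq k})$ is simply connected, by the route used for $Q_n^k$ in the proof of Theorem 4.2: Bouc's lemma on generators of $\pi_1$, in the corrected form of \cite{Su5}. A simply connected complex whose homology is free and concentrated in a single degree $d$ is a wedge of $d$-spheres; hence $\Delta(\hat\Pi_{n,\neq k})$ is a wedge of $(n-4)$-spheres with the same homology as $\Delta(\hat Q_n^k)$, which Theorem 4.2 identifies as the very same wedge, so the two are homotopy equivalent. For $n=2k$ one applies the same simple-connectivity argument together with a (necessarily non-equivariant) accounting of the two-block-of-size-$k$ partitions in the relative-homology step. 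A possible alternative to this whole scheme is to argue the homotopy equivalence directly, e.g.\ by a gluing/Mayer--Vietoris argument showing the closed star of $X$ in $\Delta(\hat Q_n^k)$ deformation retracts onto its intersection with $\Delta(\hat\Pi_{n,\neq k})$, or by an explicit acyclic matching with critical cells exactly the chains of $\hat\Pi_{n,\neq k}$; these too rely on the product structure near the $X_B$.

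The main obstacle is the relative-homology (equivalently, deformation-retraction) computation. The difficulty is that $X$ is neither an antichain, nor an order filter, nor an order ideal of $Q_n^k$, and its up- and down-links in $Q_n^k$ stop being contractible once $n-k\geq 3$ --- they involve proper parts of products $\Pi_j\times\Pi_\ell$ with a vertex removed --- so the elementary ``delete a maximal element whose link is contractible'' reduction is unavailable, and one must instead exploit the finer product structure of the intervals $\Pi(B)\times\Pi([n]\setminus B)$, uniformly over all $n\leq 2k$. Isolating precisely where the hypothesis $n\leq 2k$ is used --- it is exactly what makes $B$ unique and splits $X$ into manageable pieces --- is the crux of the argument.
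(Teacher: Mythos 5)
Your proposal is an outline rather than a proof: the step that carries all the mathematical content is precisely the one you defer. Both halves of the statement --- the $S_n$-isomorphism of homology modules for $n<2k$ and the homotopy equivalence for $n\leq 2k$ --- are made to rest on the vanishing of the relative homology $\tilde{H}_*\bigl(\Delta(Q_n^k),\Delta(\Pi_{n,\neq k})\bigr)$ (equivalently, on a deformation retraction past the pieces $X_B$), and this is never established. You describe the decomposition $X=\bigsqcup_B X_B$ and the contractibility of each $X_B$, and then correctly observe that $X$ is neither an antichain nor an order ideal or filter and that its links are not contractible in general, so that the antichain-deletion result of \cite{Su4}, the Homotopy Complementation Formula, and elementary collapsing arguments all fail to apply directly. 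Naming that obstacle is not the same as overcoming it: the relative-homology computation, the proposed retraction, and the discrete-Morse matching are all placeholders, so the transfer of freeness and of concentration in degree $n-4$ from $Q_n^k$ to $\Pi_{n,\neq k}$ --- which your wedge-of-spheres upgrade also needs --- is not available. The Lefschetz-module ``independent check'' is likewise only asserted, not computed, and even granted it would determine the homology representation only after concentration in a single degree is known, which is again the missing step; simple connectivity of $\Delta(\Pi_{n,\neq k})$ is announced but not argued, and the $n=2k$ case is dismissed in one sentence.

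For comparison, the route actually followed in \cite{Su5}, as this survey describes it, is the indirect one you sketch as a fallback: compute the (virtual) Lefschetz module of $\Pi_{n,\neq k}$ from the acyclicity of Whitney homology and the plethystic structure of posets with forbidden block sizes, prove directly that the homology is concentrated in a single degree and that the order complex is simply connected (via Bouc's lemma in its corrected form), and only then identify the homology module with the generalised Whitehouse module and deduce the wedge-of-spheres homotopy type shared with $Q_n^k$. So your Plan B points in the right direction, but as written the concentration statement for $\Pi_{n,\neq k}$ is borrowed from $Q_n^k$ through the unproven relative-homology vanishing rather than proved for $\Pi_{n,\neq k}$ itself. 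To make this a proof you must genuinely establish (i) vanishing of $\tilde{H}_i(\Pi_{n,\neq k})$ for $i\neq n-4$ with free top homology (either intrinsically or by actually executing the relative-homology argument using the product structure of the intervals below $\{B\}\sqcup\{[n]\setminus B\}$), and (ii) simple connectivity; neither is done here, and the uniform treatment of the boundary case $n=2k$ also needs a real argument.
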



\begin{conj}
The $(2k-2)$-dimensional  order complex of $\Pi_{2k+1, \neq k}$ has the homotopy type of a wedge of spheres of dimension $2k-3.$ 
\end{conj}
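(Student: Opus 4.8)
We sketch a strategy modelled on the indirect arguments used for the companion posets $Q_n^k$ (Theorem 4.2) and $\Pi_{2k+2,\leq k}$ (Theorem 4.4(2)). Assume $k\geq 3$, so that $d:=2k-3\geq 3$ (the case $k=2$, where $\Pi_{5,\neq 2}$ is the $3$-equal lattice, is elementary since the order complex is then a connected graph, hence trivially a wedge of circles). The plan is to verify, for $k\geq 3$, the two hypotheses of the classical fact that a simply connected CW complex whose integral homology is free and concentrated in a single degree $d\geq 2$ is homotopy equivalent to a wedge of $d$-spheres.

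The first step is the ``prediction''. Using the acyclicity of Whitney homology together with the plethystic generating-function machinery of \cite{Su1} and \cite{SuJer} --- exactly as in the treatment of the $k$-equal and block-size-restricted posets --- one computes the Frobenius characteristic of the Lefschetz ($S_{2k+1}$-equivariant reduced Euler characteristic) module of $\Delta(\Pi_{2k+1,\neq k})$; since every lower interval $(\hat 0,x)$ of $\Pi_{2k+1,\neq k}$ is a product of proper parts of smaller posets of the same type, this yields a closed plethystic expression. One then checks that it equals $(-1)^{2k-3}$ times a genuine, Schur-positive $S_{2k+1}$-module. This is the representation-theoretic evidence that the homology of $\Pi_{2k+1,\neq k}$ is concentrated in degree $2k-3$ and torsion-free, and it simultaneously identifies the candidate $S_{2k+1}$-module that the homology should afford, in analogy with the generalised Whitehouse module $\pi_k h_1^{\,n-k}-\pi_n$ of Theorem 4.1(4).

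Next one must prove simple connectivity and homology concentration. For $k\geq 3$ all atoms (partitions of shape $(2,1^{2k-1})$) and all rank-two elements survive in $\Pi_{2k+1,\neq k}$, so I would show $\Delta(\Pi_{2k+1,\neq k})$ is $1$-connected by applying Bouc's technical lemma in the corrected form of \cite{Su5} (p.~276), discarding low-rank elements one at a time while checking that the relevant links remain connected. For the homology, the device that works for $Q_n^k$ --- an $S_n$-equivariant homotopy equivalence with the Cohen--Macaulay truncation $P_n^k$ of rank $n-2$ --- does not transfer directly, because the forbidden set (all partitions with a block of size $k$) is neither an order ideal nor an order filter, and because the open interval below a coatom of shape $(1,2k)$ is isomorphic to the proper part of $\Pi_{2k,\neq k}$, whose order complex is a non-contractible wedge of $(2k-4)$-spheres by Theorems 4.2 and 4.5, so coatoms cannot simply be collapsed away. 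Instead I would work through the inclusions $\Pi_{2k+1,\neq k}\subset Q_{2k+1}^k\subset\Pi_{2k+1}$ and analyse the long exact homology sequences of the two pairs using the equivariant relative-homology and antichain-deletion tools of (\cite{Su1}, Theorem 1.10) and (\cite{Su4}, Theorem 4.2) together with the Homotopy Complementation Formula of \cite{BWal}; the relative terms are controlled by products of smaller partition lattices and by the already-known homology of the posets $Q_m^k$ and $\Pi_{m,\neq k}$ with $m\leq 2k$, so with enough bookkeeping one may hope to conclude that everything collapses into degree $2k-3$ and that no torsion is created (the torsion appearing at $k=2$, in Bouc's matching complex, being absent for $k\geq 3$ just as it is for $Q_n^k$). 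Combining this with the first step and the classical fact above then yields the conjecture.

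The main obstacle is this last step: ruling out top-degree homology $\tilde H_{2k-2}$ and proving freeness, precisely because $\Pi_{2k+1,\neq k}$ is one step too large to be reached from a Cohen--Macaulay subposet the way $\Pi_{n,\neq k}$ is for $n\leq 2k$, and because the pair $(\Pi_{2k+1},\Pi_{2k+1,\neq k})$ has a relative complex whose chains thread through partitions with a block of size $k$ in an order-theoretically awkward manner. A clean combinatorial model for that relative complex, or a nonpure lexicographic shelling of $\Delta(\Pi_{2k+1,\neq k})$ in the sense of Bj\"orner--Wachs \cite{BW} in which every maximal decreasing face has dimension $2k-3$, would settle the conjecture immediately; producing either appears to be the crux.
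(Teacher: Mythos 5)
This statement is an open \emph{conjecture} in the paper, not a theorem: the paper supplies no proof, only the partial information that by \cite{Su5} the integral homology of $\Pi_{2k+1,\neq k}$ can be nonzero only in degrees $2k-3$ and $2k-4$, and that its M\"obius number is computed in (\cite{Su5}, Theorem 4.5). So there is no argument of the author's to compare yours against, and your submission, by its own admission, is a programme rather than a proof: the step you defer (``with enough bookkeeping one may hope to conclude'') is exactly the content of the conjecture, so nothing has been established.

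Beyond that, you mislocate the remaining obstruction. You name the crux as ``ruling out top-degree homology $\tilde{H}_{2k-2}$,'' but that part is already known: the result quoted from \cite{Su5} kills everything except degrees $2k-3$ and $2k-4$. What is genuinely open is showing $\tilde{H}_{2k-4}(\Pi_{2k+1,\neq k})=0$ (one degree \emph{below} the conjectured concentration) and that $\tilde{H}_{2k-3}$ is free; simple connectivity via the corrected Bouc lemma is indeed the standard first step in \cite{Su4} and \cite{Su5}, and a non-vanishing reduced Euler characteristic or a Schur-positive Lefschetz module is only consistent with, not probative of, concentration in a single degree. Your proposed bookkeeping through the inclusions $\Pi_{2k+1,\neq k}\subset Q_{2k+1}^{k}\subset \Pi_{2k+1}$ with the antichain-deletion and relative-homology tools of \cite{Su1}, \cite{Su4} and the Homotopy Complementation Formula of \cite{BWal} is a reasonable thing to try, but the long exact sequences it produces permit a nonzero class precisely in degree $2k-4$ via the connecting homomorphism, and your sketch says nothing about why that class must vanish or why no torsion arises; note also that the deleted set (partitions with some block of size $k$) is not an antichain in $\Pi_{2k+1}$, so the antichain lemma does not apply directly to the pair you want. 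Finally, the parenthetical $k=2$ case is outside the scope of the statement (the asserted dimension $2k-2$ of the order complex presupposes $k\geq 3$, as in the surrounding results). In short: the conjecture remains open, and the proposal identifies plausible tools but not the missing idea.
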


It is shown in \cite{Su5}  that the integral homology can only occur in degrees $2k-3$ and $2k-4.$ The M\"obius number of the poset is computed in (\cite{Su5}, Theorem 4.5).

The posets described in this section have the property that they are  pure with  non-vanishing homology only in one less than the top dimension, and also have the homotopy type of a wedge of spheres.  
These posets are clearly not shellable, so the methods of \cite{Bj1} and  \cite{BW} do not apply.   

\begin{qn} Is there a general edge labelling that would detect this phenomenon of collapse in homology and homotopy type from the top dimension to a lower one, and  corroborate the homotopy type of these posets?
\end{qn}

\section{Stability in Homology}

This section is prompted by another observation in Stanley's paper  \cite{St2}:
For a fixed subset $S$ of ranks, the numbers $b_S(n)$ stabilise at $n=2\max(S).$ That is,  $b_S(m)= b_S (2(\max S))$ for all $m\geq 2\max S.$  In particular, if $b_S(2\max(S))=0,$ then $b_S(n)=0$ for all 
$n\geq 2(\max S).$  

In view of equations (2) and (3) of Section 2, Stanley's statement is equivalent to the analogous statement for the multiplicities $a_S(n)$ of the trivial representation in the action of $S_n$ on the maximal chains of the rank-selected subposet $\Pi_n(S).$   Stability in the special case of the trivial representation is \lq\lq not difficult to see" (\cite{St2}); this will be evident in the proof of Theorem 5.3 below, where we establish a somewhat more  general result.
   Stanley's stability result for $b_S(n)$ was also proved using  Hersh's partitioning of $\Delta(\Pi_m)/S_m$   (\cite{HH}, Theorem 2.5). 

Coincidentally, there has been a recent surge of interest in such stability questions, in the work of Church and Farb as well as others.  See \cite{CF} and the references therein.

To avoid confusion and trivialities, we point out that 
$a_\emptyset(n)=1=b_\emptyset(n)$ and $a_{\{1\}}(n)=1$ for all $n\geq 2.$

\begin{prop}  (\cite{St2}, p. 152) Let $S$ be any (nonempty) subset of the nontrivial ranks $[1,n-2].$   
\begin{enumerate}
\item  (implicit in \cite{St2}) $a_S(n)= a_S (2\max S)=\overline{a}_S, \  n\geq 2(\max S),$   and hence
\item 
$b_S(n)= b_S (2(\max S)=\overline{b}_S$ for all $n\geq 2(\max S).$
\end{enumerate}

\end{prop}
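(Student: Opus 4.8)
The plan is to prove part (1) first, since part (2) then follows immediately from the inclusion-exclusion relation (2.3) $\beta_S(n)=\sum_{T\subseteq S}(-1)^{|S-T|}\alpha_T(n)$: if every $\alpha_T(n)$ with $T\subseteq S$ has stabilized to $\overline{a}_T$ once $n\geq 2\max T$, and $\max T\leq \max S$ for all such $T$, then the multiplicity $b_S(n)=\sum_{T\subseteq S}(-1)^{|S-T|}\overline{a}_T$ is independent of $n$ for $n\geq 2\max S$. So the whole content is the stability of the permutation-module multiplicities $a_S(n)$.

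For part (1), I would use the plethystic recurrence for $\alpha_S(n)$ from Theorem 2.2, namely $\alpha_S(n)=\alpha_{S-s_1}(n-s_1)\big[\sum_{i\geq1}h_i\big]\big\vert_{\deg n}$, together with the interpretation of $a_S(n)$ as the inner product of the Frobenius characteristic $\alpha_S(n)$ with $h_n$ (equivalently, the coefficient of the monomial symmetric function $m_{(1^n)}$, equivalently the number of $S_n$-orbits on the rank-selected maximal chains). The first step is to argue by induction on $|S|$. For the base case, $\alpha_{\{s\}}(n)=h_s\,h_{n-s}\cdot(\text{number of blocks merged})$-type expression; more precisely, for a single rank $s$ the maximal chains of $\Pi_n(\{s\})$ are pairs (partition of rank $s$, refinement data), and a direct count shows the number of $S_n$-orbits is the number of partitions of the integer $n$ into exactly $n-s$ parts, which is constant once $n-s\geq s$, i.e. $n\geq 2s$. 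For the inductive step, I would compute $\langle \alpha_S(n),h_n\rangle$ by expanding the plethysm $\alpha_{S-s_1}(n-s_1)\big[\sum_i h_i\big]$: since taking the inner product with $h_n$ picks out, for each monomial $h_2^{c_2}h_3^{c_3}\cdots$ appearing in $\alpha_{S-s_1}(m)$ (with $m=n-s_1$), the number of ways to write $n$ as an ordered-into-$m$-parts composition refining that monomial's block structure — a quantity that, crucially, only grows by appending new singleton blocks (parts equal to $1$) as $n$ increases, and these contribute in a way that stabilizes once $m=n-s_1\geq 2\max(S-s_1)=2(\max S - s_1)$, i.e. once $n\geq 2\max S$.

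The cleanest way to package the inductive step is probably to track the $h_1$-content: write $\alpha_{S-s_1}(m)=\sum_\mu c_\mu(m)\, h_\mu$ where $\mu$ ranges over partitions of $m$ and, by induction, the coefficient of $h_\mu$ with the $1$'s of $\mu$ stripped off stabilizes; then the plethysm distributes each part of $\mu$ into an arbitrary composition, and $\langle\cdot,h_n\rangle$ counts the monomials of degree $n$ — one checks that increasing $n$ by $1$ beyond the threshold merely adds one more distributed singleton in a bijective fashion, leaving the orbit count unchanged. Alternatively, and perhaps more transparently, one can give the direct combinatorial argument promised in the text for Theorem 5.3: an $S_n$-orbit of a rank-selected maximal chain in $\Pi_n(S)$ is determined by the sequence of "merge patterns" up to relabeling, and elements not involved in any merge below rank $\max S$ are interchangeable singletons whose number can be padded freely once $n\geq 2\max S$ (the factor $2$ comes from needing at least $\max S$ non-singleton "active" elements to realize the top selected rank while leaving room for the merges). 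The main obstacle will be making this padding bijection precise and checking the threshold is exactly $2\max S$ rather than something slightly larger — in particular, verifying that at $n=2\max S$ there are already "enough" elements that every orbit occurring for larger $n$ is represented, which requires a careful look at the extremal case where the selected ranks force the maximum number of merges.
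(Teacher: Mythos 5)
Your reduction of part (2) to part (1) via equation (3) is exactly the paper's point of view, and your second, ``more transparent'' sketch is in substance the paper's own proof: Proposition 5.1(1) is the $k=0$ case of the argument given for Theorem 5.3. There one conditions on the chain element at the top selected rank $s_r=\max S$: it has $n-s_r$ blocks, so its type is $(\mu_1+1,\ldots,\mu_\ell+1,1,\ldots,1)$ for some $\mu\vdash s_r$, with at least $n-2s_r$ singleton blocks; the part of the chain below it lies in $[\hat 0,\tau]\cong\prod_i\Pi_{\mu_i+1}$, so the number of orbits it contributes depends only on $\mu$ and not on $n$; and for $n\ge 2\max S$ every $\mu\vdash s_r$ actually occurs, the extremal case $\mu=(1^{s_r})$ being what forces the factor $2$. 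The ``padding bijection'' you defer is precisely this bookkeeping, and it is not a serious obstacle; making it precise is the paper's proof.

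Your primary route, induction on $|S|$ through $\alpha_S(n)=\alpha_{S-s_1}(n-s_1)\bigl[\sum_i h_i\bigr]\vert_{\deg n}$, has a genuine gap, in two respects. First, the threshold arithmetic is wrong: $n-s_1\ge 2\max(S-s_1)$ reads $n\ge 2\max S-s_1$, not $n\ge 2\max S$, and the stronger statement that this induction would deliver is false. For $S=\{1,3\}$ one finds $a_S(5)=3$ (the types $(4,1)$ and $(3,2)$ at rank $3$ admit $1+2$ placements of the doubleton below), while $a_S(6)=a_S(7)=\cdots=4$; stability really does begin only at $2\max S$. Second, and this explains the failure: the inner function $\alpha_{S-s_1}(n-s_1)$ itself varies with $n$, so stability of its trivial multiplicity alone cannot control $\langle\alpha_S(n),h_n\rangle$. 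Peeling off the bottom rank converts trivial-isotypic data for $S$ into parabolic orbit counts for the smaller rank set (compare the identity $a_{\{1\}\cup S}(n)=a'_{S-1}(n-1)$ derived in the proof of Proposition 5.2), i.e.\ the quantities of Theorem 5.3 with $k>0$, whose stabilization threshold is $2\max(S-s_1)+k$; that shift is exactly what restores $2\max S$. So the strengthened inductive hypothesis you gesture at (stability of the whole $h$-expansion with the $h_1$'s stripped, equivalently of all $(S_{n-k}\times S_\nu)$-orbit counts) is unavoidable, and at that point you are re-proving Theorem 5.3. It is cleaner to argue directly at the top rank, as in your second sketch and in the paper.
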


It turns out that this stability is transferred to the multiplicities $b'_S(n)$ as well.  Although this is also a special case of Theorem 5.3 below, it is worth mentioning separately because in this case  the formulas derived  in \cite{Su1} allow us to write down relatively simple and interesting relationships between the stable values.   The stability also transfers to the numbers  $a'_S(n)$ which denote the number of $( S_{n-1}\times S_1) $-orbits in the $S_n$-action on the rank-selected subposet corresponding to the set $S.$   For a fixed set $S,$ we write $\overline{a}_S$ for the limiting value of $a_S(n)$ as $n$ approaches $\infty,$ and similarly $\overline{b}_S=\lim_{n\rightarrow\infty} b_S(n),$ 
 $\overline{b'}_S=\lim_{n\rightarrow\infty} b'_S(n) $  and 
 $\overline{a'}_S=\lim_{n\rightarrow\infty} a'_S(n).  $  We record  the actual expressions relating the limiting values in:

\begin{prop}  Fix $n$ and a nonempty subset $S\subseteq [1,n-2]$ such that $n\geq 2 \max(S)+1.$ Then 
\begin{enumerate} 
\item $a'_S(n)=a'_S(2\max S +1)=\overline{a'}_S$ for all $n\geq 2(\max S)+1,$ and 
 $\overline{a'}_S =\overline{a}_{(S+1)\cup\{1\}}.$
\item $b'_T(n)=b'_T(2\max T +1)=\overline{b'}_T$ for all $n\geq 2(\max T)+1, $  and $\overline{b'}_T=\overline{b}_{\{1\}\cup (T+1)}+\overline{b}_{(T+1)}.$
\item The multiplicity of the irreducible indexed by $(n-1,1)$ (the reflection representation of $S_n$) also stabilises in both $\alpha_S(n)$ and $\beta_S(n)$ when  $n\geq 2\max(S)+1.$  The stable values are 
  $\overline{a}_{\{1\}\cup (S+1)}-\overline{a}_S$ 
and $\overline{b}_{\{1\}\cup (S+1)}+\overline{b}_{(S+1)}-
\overline{b}_{S}$ respectively.
\end{enumerate}
\end{prop}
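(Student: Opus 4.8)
The plan is to derive all three parts from three tools already available: Stanley's subset expansion $\alpha_S(n)=\sum_{T\subseteq S}\beta_T(n)$ of (2.2); the recurrence $b_{S\cup\{1\}}(n)+b_S(n)=b'_{S-1}(n-1)$ of Proposition 2.8; and the stability of $a_S(n)$ and $b_S(n)$ from Proposition 5.1. The single piece of representation theory I would isolate first is the elementary remark that, if an $S_n$-module has Frobenius characteristic $f$, then the multiplicity of the trivial representation of $S_{n-1}\times S_1$ in it equals the Hall inner product $\langle f,h_{n-1}h_1\rangle$ (Frobenius reciprocity, since $h_{n-1}h_1$ is the characteristic of the corresponding induced module), while by the Pieri rule $h_{n-1}h_1=s_{(n)}+s_{(n-1,1)}$. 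Applied to $\alpha_S(n)$ and to $\beta_S(n)$ this gives
$$a'_S(n)=a_S(n)+m_{(n-1,1)}\bigl(\alpha_S(n)\bigr),\qquad b'_S(n)=b_S(n)+m_{(n-1,1)}\bigl(\beta_S(n)\bigr),$$
where $m_{(n-1,1)}$ denotes the multiplicity of the reflection representation $s_{(n-1,1)}$. This alone reduces part (3) to parts (1) and (2) together with Proposition 5.1.

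I would establish part (2) first, as it is the linchpin. Apply Proposition 2.8 with its set ``$S$'' replaced by $T+1$: this is legitimate because $1\notin T+1$ and $T+1\subseteq[1,(m+1)-2]$ whenever $m\geq\max T+2$ (here $m=n-1$), a range which contains $m\geq 2\max T+1$ since $2\max T+1\geq\max T+2$ for nonempty $T$. As $(T+1)-1=T$, the recurrence reads
$$b'_T(m)=b_{\{1\}\cup(T+1)}(m+1)+b_{T+1}(m+1).$$
Both index sets $\{1\}\cup(T+1)$ and $T+1$ have maximum $\max T+1$, so by Proposition 5.1(2) the two terms on the right reach their stable values precisely when $m+1\geq 2(\max T+1)$, i.e.\ $m\geq 2\max T+1$. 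Hence $b'_T(m)$ is constant for $m\geq 2\max T+1$, with stable value $\overline{b'}_T=\overline{b}_{\{1\}\cup(T+1)}+\overline{b}_{(T+1)}$, which is part (2); the small cases ($T=\{1\}$, and, unwinding further, $T=\emptyset$ with $a_\emptyset=b_\emptyset=1$, $a_{\{1\}}=1$, $b_{\{1\}}=0$) should be checked directly against this formula.

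For part (1) I would take the Hall pairing of (2.2) with $h_{n-1}h_1$, obtaining $a'_S(n)=\sum_{T\subseteq S}b'_T(n)$; every summand has stabilized once $n\geq 2\max S+1$ (as $\max T\leq\max S$ for $T\subseteq S$), so $a'_S(n)$ stabilizes there, with $\overline{a'}_S=\sum_{T\subseteq S}\overline{b'}_T=\sum_{T\subseteq S}\bigl(\overline{b}_{\{1\}\cup(T+1)}+\overline{b}_{(T+1)}\bigr)$. The one bookkeeping identity is that the subsets of $\{1\}\cup(S+1)$ are exactly the sets $T+1$ and the sets $\{1\}\cup(T+1)$ with $T\subseteq S$; hence applying (2.2) to the index set $\{1\}\cup(S+1)$ turns the right-hand side into $\sum_{U\subseteq\{1\}\cup(S+1)}\overline{b}_U=\overline{a}_{\{1\}\cup(S+1)}$, giving part (1). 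Finally part (3) drops out of the first paragraph: $m_{(n-1,1)}(\alpha_S(n))=a'_S(n)-a_S(n)$ and $m_{(n-1,1)}(\beta_S(n))=b'_S(n)-b_S(n)$ are both stable for $n\geq 2\max S+1$ by Proposition 5.1 and parts (1) and (2), with stable values $\overline{a}_{\{1\}\cup(S+1)}-\overline{a}_S$ and $\overline{b}_{\{1\}\cup(S+1)}+\overline{b}_{(S+1)}-\overline{b}_S$.

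I expect the only real care needed is bookkeeping rather than genuine difficulty: one must track the index-set operations ($S\mapsto S+1$, $S\mapsto\{1\}\cup(S+1)$, $S\mapsto S-1$) consistently, invoke Proposition 2.8 only within its range of validity ($1\notin T+1$, and $T+1$ among the nontrivial ranks of $\Pi_{m+1}$), and verify that the primed stabilization thresholds land exactly at $n=2\max S+1$---one step later than the unprimed thresholds of Proposition 5.1---rather than being off by one.
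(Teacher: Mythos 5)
Your argument is correct, and parts (2) and (3) coincide with the paper's own proof: the paper likewise proves (2) by applying Proposition 2.8 with the index set shifted (writing $T$ for $S-1$, $m$ for $n-1$) and invoking the stability threshold $m+1\geq 2\max(T+1)$, and it obtains (3) exactly as you do, from $a'_S(n)-a_S(n)$ and $b'_S(n)-b_S(n)$ being the multiplicities of the reflection representation. Where you genuinely diverge is part (1). The paper proves it via the plethystic identity $\alpha_{\{1\}\cup S}(n)=h_2\,\frac{\partial}{\partial p_1}\,\alpha_{S-1}(n-1)$ from \cite{Su1}, which after pairing with the trivial character yields the \emph{exact} identity $a_{\{1\}\cup(S+1)}(n+1)=a'_S(n)$ for every $n$, so that stability and the stable value both follow at once from Proposition 5.1(1); you instead pair Stanley's decomposition (2.2) with $h_{n-1}h_1$ to get $a'_S(n)=\sum_{T\subseteq S}b'_T(n)$, stabilize termwise using your part (2), and then reassemble $\sum_{T\subseteq S}\bigl(\overline{b}_{\{1\}\cup(T+1)}+\overline{b}_{(T+1)}\bigr)=\sum_{U\subseteq\{1\}\cup(S+1)}\overline{b}_U=\overline{a}_{\{1\}\cup(S+1)}$. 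Your route is more elementary (it needs only (2.2), Proposition 2.8 and Proposition 5.1, not the plethystic derivative formula), but it only produces the relation between \emph{stable} values and makes (1) logically dependent on (2), whereas the paper's identity holds for all $n$ and keeps (1) self-contained --- indeed the paper remarks that its part (1) already implies the stability in (2), with Proposition 2.8 used only to compute the stable value. Your bookkeeping (the range of validity of Proposition 2.8 after the shift, the bijection between subsets of $\{1\}\cup(S+1)$ and the pairs $T+1$, $\{1\}\cup(T+1)$, and the degenerate cases $T=\emptyset,\{1\}$) is handled correctly.
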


\begin{proof} 
Part (1) can be proved using the following special case of the plethystic recurrence (\cite{Su1}, Proposition 3.1; see also Theorem 2.2 of this paper) which was crucial to the construction of the simsun permutations of Rodica Simion and this author (see Theorem 3.1).
Let 
$S=\{s_1<\ldots<s_k\}$ be a nonempty subset not containing 1, and let $S-1$ denote the subset obtained by subtracting 1 from each element of $S.$ Then, in terms of symmetric functions, 
\begin{equation}
\alpha_{\{1\}\cup S}(n)=h_2\frac{\partial}{\partial p_1} \alpha_{S-1}(n-1). 
\end{equation}

The multiplicity of the trivial representation on the left-hand side is $a_{\{1\}\cup S}(n).$   On the right-hand side,
 it is $$\langle \frac{\partial}{\partial p_1} \alpha_{S-1}(n-1), h_{n-2}\rangle,$$
which equals $ a'_{S-1}(n-1).$ Thus we have 
$$ a_{\{1\}\cup S}(n)= a'_{S-1}(n-1)$$
 
Now replace $S-1$ by $S,$  and $n-1$ by $n.$ 
Then 
\begin{equation} a_{\{1\}\cup (S+1)}(n+1)= a'_{S}(n)
\end{equation} 
The left-hand side equals its limiting value $\overline{a}_{\{1\}\cup (S+1)}$ when $n+1\geq 2\max(S+1),$ i.e., when $n\geq 2\max S +1$ as stated in Part (1).

Note that while this also establishes the stability of $b'_S(n)$ for $n$ sufficiently large, in order to compute the actual stable value, it is more efficient to  invoke  the recurrence of Proposition 2.8. 
Writing $T$ for the set $S-1$ and $m$ for $n-1$ in that recurrence, we 
have 
\begin{equation}
b'_T(m)=b_{(T+1)\cup\{1\})}(m+1)+b_{(T+1)}(m+1)
\end{equation}
which in turn equals 
                 $\overline{b}_{(T+1)\cup\{1\}}+\overline{b}_{(T+1)},$ provided $m+1\geq 2\max(T+1)=2(\max T) + 2, $ i.e., provided 
$m\geq 2(\max T)+1$ as desired. (Here $T+1$ is the subset obtained from $T$ by adding 1 to each element of $T$.)

For Part (3) it suffices to note that the multiplicity of the reflection representation is the difference $a'_S(n)- a_S(n)$ (respectively 
$b'_S(n)-b_S(n)$). 
\end{proof}

By analysing the $S_n$-orbits of maximal chains in $\Pi_n(S),$ we are able to derive a slightly stronger stability result.  The reader is referred to \cite{M} for the following definitions.  Recall  that $s_\lambda$ denotes the Schur function indexed by the integer partition $\lambda;$ it is the Frobenius characteristic of the irreducible indexed by $\lambda.$  Also recall the operation $D_\mu$ of {\it skewing} a (possibly virtual) representation $f$ of $S_n$ by the irreducible indexed by a partition $\mu$ of $m:$  its image is the representation $D_\mu f $  of $S_{n-m}$ defined by 
$\langle D_\mu f, s_\nu\rangle=\langle f, s_\mu s_\nu\rangle$  for all partitions $\nu$ of $n-m.$ Here $\langle,\rangle$ denotes the inner product of $S_n$-characters (or Frobenius characteristics) which makes the Schur function an orthonormal basis in the ring of symmetric functions. 

\begin{thm} Fix  a nonempty subset $S$ of the nontrivial ranks $[1,n-2]$ and an integer  $k\geq 0$.  Consider the Schur functions appearing in the linear span of the set $U_k=\{h_{n-\ell} s_\mu: 0\leq \ell\leq k, \mu\vdash \ell\}.$ Then for $n\geq 2\max(S) +k,$ the 
multiplicities of the irreducible indexed by $\lambda$ in  $\alpha_S(n)$ and 
$\beta_S(n)$ stabilise for every partition 
$\lambda$ of $n$ such that $s_\lambda$ is in $U_k.$  Equivalently, 
for 
$n\geq 2\max(S) +k,$
the operation of skewing the representations  $\alpha_S(n)$ and 
$\beta_S(n)$ by the irreducible indexed by $(n-\ell)$ results in stable modules for all $0\leq \ell \leq k.$

\end{thm}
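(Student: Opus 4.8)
The plan is to reduce the statement about skewing by $(n-\ell)$ to the three-term plethystic recurrence of Theorem~2.2 and then to the already-established base case $k=1$ (which is essentially Proposition~5.2(3) together with Proposition~5.1). First I would record the elementary fact that, for $f$ a symmetric function of degree $n$, skewing by the one-row partition $(n-\ell)$ is the adjoint of multiplication by $h_{n-\ell}$, so $\langle D_{(n-\ell)} f, s_\mu\rangle = \langle f, h_{n-\ell}s_\mu\rangle$; hence the assertion that $D_{(n-\ell)}\alpha_S(n)$ and $D_{(n-\ell)}\beta_S(n)$ are stable for all $0\le\ell\le k$ is exactly the assertion that the multiplicity of $s_\lambda$ stabilises for every $\lambda\vdash n$ with $s_\lambda\in U_k$. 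Since $\beta_S(n)$ is an alternating sum of the $\alpha_T(n)$ with $T\subseteq S$ (equation~(3)) and $\max T\le\max S$, it suffices to prove the claim for the permutation modules $\alpha_S(n)$; the claim for $\beta_S(n)$ follows by taking the same alternating combination of stable values.

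Next I would apply the skewing operator to the plethystic identity for $\alpha_S(n)$. Using the recurrence $\alpha_S(n)=\alpha_{S-s_1}(n-s_1)\big[\sum_{i\ge1}h_i\big]\big|_{\deg n}$ from Theorem~2.2, and the comultiplication/Leibniz behaviour of $D_{(n-\ell)}$ (equivalently of $\tfrac{\partial}{\partial p_1}$-type operators) with respect to plethysm by $\sum_i h_i$, one obtains that $D_{(n-\ell)}\alpha_S(n)$ is expressed in terms of $\alpha_{S-s_1}(n-s_1)$ skewed by one-row partitions of the smaller size, with coefficients that are themselves symmetric functions depending only on $\ell$ and not on $n$ once $n$ is large. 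The special case $\ell=0$ is stability of the Betti/orbit count (Proposition~5.1(1)), which I would take as known; the special case $\ell=1$ reduces, via equation~(5) in the proof of Proposition~5.2, to the identity $a_{\{1\}\cup(S+1)}(n+1)=a'_S(n)$ and its analogue with an extra Schur factor, again stable. For general $\ell$ I would induct on $\max S$ (the recurrence strictly decreases it, from $\max S$ to $\max S - s_1 \le \max S - 1$) with $k$ carried along, checking that the threshold $n\ge 2\max(S)+k$ is respected: when we pass to $\Pi_{n-s_1}(S-s_1)$ we need $n-s_1\ge 2\max(S-s_1)+k'$ for the relevant $k'\le k$, and since $\max(S-s_1)=\max S - s_1$ this is $n\ge 2\max S - s_1 + k'$, which is implied by $n\ge 2\max S + k$ as soon as $s_1\ge 1$ and $k'\le k$.

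The main obstacle I expect is bookkeeping the coefficient symmetric functions that appear when $D_{(n-\ell)}$ is pushed through the plethysm $(-)\big[\sum_{i\ge1}h_i\big]$: one must show these coefficients depend on $\ell$ (and on which Schur components of $\alpha_{S-s_1}$ one is skewing) but are eventually constant in $n$, so that the recursion really does propagate stability with the stated uniform threshold. This is where the hypothesis $s_\lambda\in U_k$ — i.e. that $\lambda$ differs from the one-row shape $(n)$ only in a bounded-size ``tail'' — is used: it guarantees that all intermediate skew shapes stay within $U_{k}$ of the appropriate size, so only finitely many symmetric-function coefficients, independent of $n$, ever occur. Once that uniformity is in hand, the induction closes and the equivalence of the two formulations (multiplicity stabilisation versus skew-module stabilisation) is immediate from the adjointness remark above.
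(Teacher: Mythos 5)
Your adjointness remark and the reduction of $\beta_S(n)$ to the $\alpha_T(n)$, $T\subseteq S$, via equation (3) are fine and consistent with the paper. The genuine gap is the central step: the claim that pushing $D_{(n-\ell)}$ through the plethysm in $\alpha_S(n)=\alpha_{S-s_1}(n-s_1)\bigl[\sum_{i\geq1}h_i\bigr]\big|_{\deg n}$ expresses it in terms of $\alpha_{S-s_1}(n-s_1)$ skewed by \emph{one-row} shapes, with coefficients independent of $n$. This is not bookkeeping; it fails as stated. Write $f=\alpha_{S-s_1}(n-s_1)$, $H=\sum_{i\geq1}h_i$, and compute $D_{(n-\ell)}$ by extracting the coefficient of $x^{n-\ell}$ in $f[H](x+Y)$ for a single variable $x$. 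Since $H[x+Y]=H(Y)+\tfrac{x}{1-x}\bigl(1+H(Y)\bigr)$, the coproduct expansion gives
\begin{equation*}
f\bigl[H[x+Y]\bigr]\;=\;\sum_{\lambda}\bigl(D_{\lambda}f\bigr)[H(Y)]\;\, s_\lambda\Bigl[\tfrac{x}{1-x}\bigl(1+H(Y)\bigr)\Bigr],
\end{equation*}
and the terms surviving in the coefficient of $x^{n-\ell}$ in total degree $n$ are exactly those with $n-s_1-\ell\leq|\lambda|\leq n-\ell$, where $\lambda$ runs over \emph{arbitrary} shapes: already for $\ell=0$ every irreducible constituent $\lambda$ of $f$ contributes, weighted by $\langle f,s_\lambda\rangle$ times the coefficient of $x^{n}$ in the principal specialization $s_\lambda(x,x^2,\dots)$. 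So the recursion forces you to control $\langle\alpha_{S-s_1}(n-s_1),s_\lambda s_\nu\rangle$ for arbitrary shapes $\lambda$ of nearly full degree, while your inductive hypothesis covers only the one-row skews (the $U_k$-constituents); moreover the accompanying coefficients are not constant in $n$ and the set of contributing $\lambda$ grows with $n$. The induction therefore does not close, and the base case ($S$ a singleton, general $\ell$) is not handled by your $\ell\leq1$ reductions either.

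For contrast, the paper's proof bypasses the recurrence entirely and argues directly on orbits: for $\nu\vdash k$ and $n\geq 2\max S+k$, every partition at rank $\max S$ has at least $n-2\max S\geq k$ singleton blocks and its type is an arbitrary partition $\mu\vdash\max S$ padded with singletons, so the $(S_{n-k}\times S_\nu)$-orbits of maximal chains of $\Pi_n(S)$ are indexed by $\mu$ together with a chain below rank $\max S$ --- data independent of $n$. Hence $\langle\alpha_S(n),h_{n-k}h_\nu\rangle$ is constant, and expanding $s_\nu$ integrally in the $h_\lambda$ with $\lambda\vdash k$ gives the statement for $\alpha_S(n)$; the statement for $\beta_S(n)$ then follows from equation (3) as you say. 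If you insist on a recursive, symmetric-function proof along your lines, you would need to strengthen the induction to stability of skews by arbitrary shapes with bounded complement (genuine representation stability), which is a substantially stronger statement --- the subject of the Hersh--Reiner work cited in Section 5 --- and not obtainable by the coefficient bookkeeping you describe.
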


\begin{proof} Let $\nu$ be any integer partition of $k,$ and denote by $S_\nu$ the Young subgroup determined by $\nu.$ The argument we give here  will show  that the number of $(S_{n-k}\times S_k)$-orbits, and more generally that the number of $S_{n-k}\times S_\nu$-orbits, in $\alpha_S(n)$ stabilises; in the case $k=0$  it must reduce to Stanley's original argument (unstated in \cite{St2}).  Let $S=\{s_1<s_2<\ldots<s_r\},$ and assume $n\geq 2s_r +k.$ The partitions at rank $s_r$ in $\Pi_n$ have $n-s_r$ blocks, and their {\it type}, (defined as the integer partition of $n$ whose parts encode the block sizes) must be an integer partition of $n$ of length $n-s_r,$ of the form $$\mu_1+1,\mu_2+1,\ldots, \mu_\ell+1, 1,1,\ldots, 1$$  where $\mu=(\mu_1\geq\mu_2\geq\ldots \geq \mu_\ell)$ is an integer partition of $s_r.$  The fact that $n\geq 2 s_r+k$ implies that
$s_r\leq n-s_r-k\leq n-s_r,$ ensuring that the set partitions $\tau\in\Pi_n$ at rank $s_r$ are in one-to-one correspondence with the integer partitions $\mu$ of $s_r.$  Furthermore, it is clear that there must be at least $n-s_r-|\mu|=n-2 s_r$ parts of size 1, i.e., each set partition 
has at least $n-2s_r$ singleton blocks.  But $n-2s_r\geq k,$ so this means every set partition at rank $s_r$ has at least $k$ singleton blocks.    Each such partition determines a distinct $(S_{n-k}\times S_\nu)$-orbit of the  chains supported by the rank set $S.$  Since the remaining elements of the chain lie below the 
elements at rank $s_r,$ they depend only on the choice of the integer partition $\mu$ of $s_r=\max S,$ and not on $n.$  Hence the number of $(S_{n-k}\times S_\nu)$-orbits in $\alpha_S(n)$ is constant for such values of $n$. Since the homogeneous symmetric function $h_\nu$ is the Frobenius characteristic of the trivial representation of $S_\nu$, we conclude that 
$\langle\alpha_S(n), h_{n-k} h_\nu\rangle$ is constant for $n\geq 2\max(S) +k. $  

Now use the fact \cite{M} that the functions $h_\nu$ form a 
basis for the symmetric functions of degree $k,$ and in fact each Schur function $s_\nu$ is an integer combination of the $\{h_\lambda\},$ as $\lambda$ runs over all integer partitions of $k.$
We conclude that the inner product $\langle\alpha_S(n), h_{n-k} s_\nu\rangle$ is stable for all partitions $\nu$ of $ k$ and  $n\geq 2\max(S)+k.$
The result follows with the final observation that if $n\geq 2\max(S)+k,$ then  necessarily $n\geq 2\max(S)+\ell$ for all $\ell$ between 0 and $k.$ 
\end{proof}

\begin{remark}
  Restated in the language of \cite{CF}, this theorem shows that, for fixed $k$ and nonempty $S,$  the sequences of (skew) representations $\{D_{(n-k)} \alpha_S(n)\}_n$ and 
$\{D_{(n-k)} \beta_S(n)\}_n$ are {\it representation stable}, whereas 
the sequences $\{\alpha_S(n)\}_n$ and $\{\beta_S(n)\}_n$ are $\lambda$-{\it representation stable} for  some choices (see Corollaries 5.6 and 5.8 below) of $\lambda,$ e.g., $ (n-k,k)$ and $ (n-k, 1^k).$
\end{remark}

\begin{qn} Can this stability of the action on the chains (simplices) be explained via the topology of the quotient complex  $\Delta(\Pi_n(S))/(S_{n-k}\times S_\nu)$ for an integer partition $\nu$ of $k$?  
Note that this would involve looking only  at  flag $f$-vectors, rather than  $h$-vectors as in \cite{HH}.
\end{qn}


Theorem 5.3 allows us to establish stability for more irreducibles; we single out   the cases of two-part partitions and hooks:

\begin{cor}  Let $V(k)$ denote the $S_n$-irreducible indexed by the integer partition $(n-k,k),$  where $n-k\geq k\geq 0.$ Then the multiplicity of $V(k)$ in $\alpha_S(n)$ and hence in $\beta_S(n)$ stabilises for $n\geq 2\max(S)+k.$
\end{cor}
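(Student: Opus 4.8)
The plan is to deduce Corollary 5.6 directly from Theorem 5.3 by observing that the Schur function $s_{(n-k,k)}$ lies in the linear span $U_k$ of the set $\{h_{n-\ell}s_\mu : 0\le \ell\le k,\ \mu\vdash\ell\}$. First I would recall the dual Jacobi--Trudi (or rather the ordinary Jacobi--Trudi) identity for a two-row shape: $s_{(n-k,k)} = h_{n-k}h_k - h_{n-k+1}h_{k-1}$, a $2\times 2$ determinant of complete homogeneous symmetric functions. Since $h_{n-k}h_k$ and $h_{n-k+1}h_{k-1}$ are each of the form $h_{n-\ell}h_\nu$ with $\ell\le k$ and $\nu$ a single part of size $\ell\le k$, and since $h_\nu = s_{(\nu)}$ when $\nu$ has one part, both terms are visibly in $U_k$ (with $\mu = (\ell)$). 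Hence $s_{(n-k,k)}\in U_k$, and Theorem 5.3 applies verbatim: the multiplicity of the irreducible indexed by $(n-k,k)$ in $\alpha_S(n)$, and therefore in $\beta_S(n)$, is stable for $n\ge 2\max(S)+k$.

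Concretely, the key steps in order are: (i) state the Jacobi--Trudi expansion $s_{(n-k,k)} = h_{n-k}h_k - h_{n-k+1}h_{k-1}$ (interpreting $h_{k-1}$ with the convention $h_0=1$ when $k=0$, in which case the statement is the trivial-representation stability of Proposition 5.1); (ii) note that each summand is an integer multiple of some $h_{n-\ell}s_{(\ell)}$ with $0\le \ell\le k$, so $s_{(n-k,k)}$ lies in the span $U_k$; (iii) invoke the conclusion of Theorem 5.3, which asserts stability of $\langle\alpha_S(n),s_\lambda\rangle$ for every $\lambda\vdash n$ with $s_\lambda\in U_k$, once $n\ge 2\max(S)+k$; (iv) transfer the conclusion from $\alpha_S(n)$ to $\beta_S(n)$ via the inclusion--exclusion formula (2.3), $\beta_S(n)=\sum_{T\subseteq S}(-1)^{|S-T|}\alpha_T(n)$, noting that $2\max(T)\le 2\max(S)$ for every $T\subseteq S$ so all the $\alpha_T(n)$ multiplicities are simultaneously stable for $n\ge 2\max(S)+k$.

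There is essentially no obstacle here; this corollary is a routine specialization. The only point requiring any care is the boundary case $k=0$ (where $s_{(n)}=h_n$ and the statement reduces to Proposition 5.1(1)) and making sure the index $n-k\ge k$ hypothesis keeps $(n-k,k)$ a genuine partition, which it does by assumption. I would also remark, for the reader, that the ``hence in $\beta_S(n)$'' clause is exactly the observation that the window $n\ge 2\max(S)+k$ is uniform over all $T\subseteq S$, so no separate argument for $\beta_S$ is needed beyond (2.3). The analogous statement for hooks (Corollary 5.8) will follow identically, using instead the Jacobi--Trudi expansion of $s_{(n-k,1^k)}$ in terms of complete homogeneous symmetric functions, which again expresses it inside $U_k$.
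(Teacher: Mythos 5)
Your proposal is correct and follows essentially the same route as the paper: the paper's proof applies Theorem 5.3 with $\nu=(k)$ to get stability of $\langle\alpha_S(n),h_{n-\ell}h_\ell\rangle$ for $0\leq\ell\leq k$ and then invokes the Jacobi--Trudi expression $h_{n-k}h_k-h_{n-k+1}h_{k-1}$ for the Frobenius characteristic of $V(k)$, exactly as you do. Your extra step (iv) transferring to $\beta_S(n)$ via inclusion--exclusion is harmless but not needed, since the statement of Theorem 5.3 already covers $\beta_S(n)$.
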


\begin{proof} Applying Theorem 5.3 with $\nu=(k)$ yields the stability of the inner products $\langle\alpha_S(n), h_{n-\ell} h_\ell\rangle$ 
for $0\leq \ell \leq k.$ Since the Frobenius characteristic of   $V(k)$ is
$h_{n-k}h_k - h_{n-k+1} h_{k-1},$  the result follows.  
\end{proof}

\begin{remark}
The stable values of the preceding multiplicities can be determined as before.  For instance, we have that the stable multiplicity of $(n-2,2)$ in $\alpha_S(n)$ is (for $n\geq 2\max(S) +2$) $a_{(S+2)\cup 2} -2 a'_S.$
\end{remark}

\begin{cor} Let $V(1^k)$ denote the irreducible indexed by the hook shape $(n-k, 1^k)$ (with $k$ parts equal to 1).  Then the multiplicity of $V(1^k)$   in $\alpha_S(n)$ and hence in $\beta_S(n)$ is stable for $n\geq 2\max(S)+k.$
\end{cor}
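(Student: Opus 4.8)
The plan is to mimic the proof of Corollary 5.6, reducing everything to Theorem 5.3: it suffices to exhibit the Schur function $s_{(n-k,1^k)}$ as an integer linear combination of the elements $h_{n-\ell}s_\mu$ with $0\le \ell\le k$ and $\mu\vdash \ell$, i.e.\ to show that $s_{(n-k,1^k)}$ lies in the span of $U_k$. Once that is done, Theorem 5.3 immediately yields that $\langle \alpha_S(n), s_{(n-k,1^k)}\rangle$ is constant for $n\ge 2\max(S)+k$, and the corresponding stability for $\beta_S(n)$ follows either directly from Theorem 5.3 or by applying the inclusion--exclusion formula (3) to the finitely many subsets $T\subseteq S$ (each of which satisfies $\max T\le \max S$, so the same bound applies uniformly).

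First I would record the Pieri rule for multiplication by an elementary symmetric function: since $e_b=s_{1^b}$ and a vertical strip of size $b$ can be adjoined to the single row $(a)$ in exactly two ways, one has
$$h_a e_b = s_{(a+1,1^{b-1})} + s_{(a,1^b)}.$$
Solving for the hook and iterating (the sum telescopes) gives the classical identity
$$s_{(a,1^b)} = \sum_{i=0}^{b} (-1)^i\, h_{a+i}\, e_{b-i},$$
with the conventions $e_0=1$ and $h_{a+b}e_0=h_{a+b}$.

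Next I would specialise $a=n-k$, $b=k$, and rewrite each term $h_{n-k+i}\,e_{k-i}$ as $h_{n-(k-i)}\,s_{1^{k-i}}$. Setting $\ell=k-i$, every summand has the form $h_{n-\ell}\,s_\mu$ with $0\le \ell\le k$ and $\mu=1^\ell\vdash \ell$, so $s_{(n-k,1^k)}$ is an integer combination (with coefficients $\pm1$) of elements of $U_k$. Invoking Theorem 5.3 then completes the argument, using its closing observation that $n\ge 2\max(S)+k$ forces $n\ge 2\max(S)+\ell$ for all $0\le \ell\le k$, so that all the inner products $\langle\alpha_S(n), h_{n-\ell}s_{1^\ell}\rangle$ occurring in the expansion are simultaneously stable.

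There is no real obstacle here: the only content is recalling the correct form of the hook expansion in terms of the $h$'s and $e$'s, and checking that the threshold $n\ge 2\max(S)+k$ provided by Theorem 5.3 is precisely the one demanded by the largest hook appearing. (If desired, the stable value can then be read off term by term from the expansion, exactly as in Remark 5.7.)
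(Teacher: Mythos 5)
Your argument is correct and is essentially the paper's own proof: both reduce the hook multiplicity to Theorem 5.3 via the identity $s_{(n-k,1^k)}=\sum_{i=0}^{k}(-1)^{k-i}h_{n-i}e_i$ (you derive it from the Pieri rule and telescoping, the paper simply cites it), writing each term as $h_{n-\ell}s_{1^\ell}\in U_k$ and using the stability of all the inner products $\langle\alpha_S(n),h_{n-\ell}s_{1^\ell}\rangle$ for $n\geq 2\max(S)+k$. No substantive difference in approach.
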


\begin{proof} Take $\nu=(1^k)$ in Theorem 5.5.  Then $s_\nu=e_k,$ 
the $k$th elementary symmetric function.  The set $U_k$ in the statement  of Theorem 5.5  then contains the set $\{ (-1)^{k-i} h_{n-i} e_i\}_{i=o}^k.$  
But a well-known symmetric function identity \cite{M}  says that the Schur function corresponding to the hook $(n-k, 1^k)$ equals the alternating sum 
$$ \sum_{i=0}^k (-1)^{k-i} h_{n-i} e_i.$$ 
The result now follows as before.
\end{proof}

For definitive and far-reaching results on the stability question  in the rank-selected homology of $\Pi_n,$ see the forthcoming paper  \cite{HR}.

Finally, we have:

\begin{remark} Stability of another sort is exhibited by the homology of the order complexes (discussed in Section 4) of the sequences of posets $\Pi_{n, \leq k}$ and $\Pi_{n, \neq k},$ for $k>\frac{n-2}{2}$ and $k>\frac{n}{2}$ respectively (see  the chain of inclusions in equation (8)).  The significance (if any) of the half-way mark  is unclear.
\end{remark}

The families of subposets described in this paper often have natural filtrations associated with them. The topological results in \cite{Su4} and \cite{Su5} were obtained  by first establishing that the order complexes are simply connected, and then showing that homology is concentrated in a single degree. 
 There has  been an explosion in the  development of tools in geometric and topological combinatorics in recent years: see \cite{Wa}.  Perhaps these  techniques  can be used successfully on the problems 
discussed here.


\begin{thebibliography}{<99>}

\bibitem{Ba1}
   K. Baclawski,  \textit{Cohen-Macaulay ordered sets},
J. Algebra \textbf{63} (1980),  226--258.

\bibitem{Ba2}
   K. Baclawski,  \textit{Cohen-Macaulay connectivity and geometric lattices}, European J. Comb. \textbf{3} (1982),  293--305.

\bibitem{Bj1}  A. Bj{\"o}rner, 
\textit{Shellable and Cohen-Macaulay partially 
ordered sets}, Trans. Amer. Math. Soc. \textbf{260} (1980), 159--183.

\bibitem{Bj2}   \bysame, \textit{Topological Methods}, in 
\textit{Handbook of Combinatorics}, (R. Graham, M. Gr\"otschel and 
L. L\'ovasz, eds.), North-Holland (1995), 1819--1872.

\bibitem{BL} A. Bj\"orner and L. Lov\'asz, \textit{Linear decision 
trees, subspace arrangements and M\"obius functions},
J. Amer. Math. Soc. \textbf{7} (1994), 677--706.


\bibitem{BW} A. Bj\"orner and M. L. Wachs, \textit{Shellable nonpure 
 complexes and posets} I, Trans. Amer. Math. Soc. \textbf{348} (4) (1996), 1299--1327.

\bibitem{BWal} A. Bj\"orner and J. Walker, \textit{A Homotopy Complementation Formula for Partially Ordered Sets}, European J. Comb. \textbf{4} (1983), 11--19.

\bibitem{BWe} A. Bj\"orner and V. Welker, \textit{
The homology of \lq\lq $k$-equal \rq\rq manifolds and related partition 
lattices}, Adv. in Math. \textbf{110} (1995), 277--313.

\bibitem{Bo}  S. Bouc, \textit{Homologie de certains ensembles de 
2-sous-groupes du groupe sym\'etrique}, J. Algebra \textbf{150} (1992), 158--186.

\bibitem{CHR} A.R. Calderbank, P. Hanlon and R.W Robinson, 
 \textit{Partitions into even and odd block size and some unusual
 characters of the symmetric groups}, Proc. London Math. Soc. 
\textbf{3 (53)}, (1986), 288--320.

\bibitem{CF} T. Church and B. Farb, \textit{Representation theory and homological stability}, Advances in Mathematics \textbf{245} (2013), 250-314.


\bibitem{Ha1}  P.~Hanlon,
\textit{The fixed-point partition lattices}, Pacific J. Math. \textbf{96} (1981), 
319--341.

\bibitem{Ha2}  \bysame, \textit{A proof of a conjecure of Stanley concerning partitions of a set}, European J. Combin. \textbf{4} (1983), 137--141.

\bibitem{He}  P.~Hersh, \textit{Lexicographic shellability for balanced complexes}, J. Algebraic Combinatorics \textbf{17} (2003), No. 3, 225--254. 


\bibitem{HH} P.~Hanlon and P.~Hersh, \textit{Multiplicity of the trivial representation in rank-selected homology of the partition lattice}, 
J. Algebra, \textbf{266} (2003), No. 2, 521--538. 

\bibitem{HR} P. L. Hersh and V. Reiner, \textit{$S_n$-representation stability for Whitney homology and rank-selected homology of the partition lattice}, to appear.

\bibitem{J} B.~ Joseph, \textit{The Involution Principle and h-positive Symmetric Functions}, Thesis, Massachusetts Institute of Technology, September 2001. 

\bibitem{JV} M. Josuat-Verg\`es, \textit{A generalization of Euler numbers to finite Coxeter groups}, Annals of Combinatorics, to appear. 

\bibitem{M}  I.~G.~Macdonald, \textit{Symmetric functions and 
Hall polynomials} , Oxford University Press (1979).

\bibitem{RW} C. A. Robinson and S. Whitehouse, \textit{The tree 
representation of $\Sigma_{n+1}$},
J. Pure and Appl. Algebra, \textbf{111} (1-3) (1996), 245--253.

\bibitem{Sp} T. A. Springer, \textit{Remarks on a combinatorial problem}, Nieuw Arch. Wisk. (3) \textbf{19} (1971), 30-36.

\bibitem{St1} R.~P.~Stanley, \textit{Balanced Cohen-Macaulay complexes}, 
Trans. Amer. Math. Soc. \textbf{249} (1979), 139--157.

\bibitem{St2} \bysame, \textit{Some aspects of groups acting 
on finite posets}, J. Comb. Theory (A) \textbf{32}, No.~2 (1982),
132--161.

\bibitem{St3} \bysame, \textit{Enumerative 
Combinatorics, Vol.1}, Wadsworth and Brooks/Cole, Pacific Grove,
 CA (1986); Second edition, Cambridge Studies in Advanced Mathematics 49, Cambridge University Press, Cambridge, 2012.

\bibitem{St4} \bysame, \textit{Enumerative 
Combinatorics, Vol.2}, Cambridge Studies in Advanced Mathematics 62, Cambridge University Press, Cambridge, 1999.

\bibitem{St5} \bysame,  \textit{A survey of alternating permutations}, Contemporary Mathematics \textbf{531} (2010), 165--196.

\bibitem{St6} \bysame, \textit{Permutations}, Notes for the 
Amer. Math. Soc. Colloquium Lectures 2010.

\bibitem{Su1} S.~Sundaram, \textit{The homology representations of the 
symmetric group on Cohen-Macaulay subposets of the partition lattice},
Adv. in Math. \textbf{104} (2)(1994), 225--296.

\bibitem{SuJer}  \bysame,  \textit{Applications of the Hopf trace formula to computing homology representations}, in \textit{Jerusalem Combinatorics Conference 1993}, 
Contemporary Math., \textit{Barcelo and Kalai, eds.},  Amer. Math. Soc.\textbf{178} (1994), 277--309.

\bibitem{Su2} \bysame, \textit{The homology  of partitions with an even number of blocks}, J. Algebraic Comb. \textbf{4} (1995), 69--92.

\bibitem{Su3} \bysame, \textit{Plethysm, partitions with an even number of blocks and Euler numbers},
DIMACS Series in Discrete Mathematics and Theoretical Computer Science \textbf{24}, (Billera, Greene, Simion, Stanley, eds.), Amer. Math. Soc. (1996), 171--198.

\bibitem{Su4}  \bysame,  \textit{Homotopy of non-modular partitions and the Whitehouse module}, 
J. Algebraic Comb. \textbf{9} (1999), 251--269.

\bibitem{Su5}  \bysame,  \textit{On the topology of two partition posets with forbidden block sizes}, 
J. Pure and Applied Algebra \textbf{155} (2001), 271--304.  


\bibitem{SWa} S.~Sundaram and M.L. Wachs,
\textit{The homology representations of the $k$-equal partition lattice}, 
Trans. Amer. Math. Soc.  \textbf{349} (3) (1997), 935--954.

\bibitem{SWe}  S.~Sundaram and V. Welker,
\textit{Group actions on subspace arrangements and applications to 
configuration spaces}, Trans. Amer. Math. Soc., Vol. \textbf{349}, No. 4 (1997), 1389--1420.


\bibitem{Wa} M. L. Wachs, \textit{Poset Topology: Tools and Applications}, 
in \textit{Geometric Combinatorics}, IAS/Park City Math. Series Vol. \textbf{13}, Ezra Miller, 
Victor Reiner, Bernd Sturmfels, Editors, Amer. Math. Soc. and Inst. for Adv. Study (2007).

\bibitem{Wal} J. Walker, \textit{Homotopy type and Euler 
characteristic of partially ordered sets},
Europ. J. Combinatorics \textbf{2} (1981), 373--384.

\bibitem{Wel} V. Welker,   \textit{Partition Lattices, 
Group Actions on Subspace Arrangements and Combinatorics of 
Discriminants}, Habilitationsschrift, Department of Mathematics and 
Computer Science, GH Universit\"at Essen.

\bibitem{OEIS} OEIS Foundation Inc. (2011), \textit{The On-Line Encyclopedia of Integer Sequences}, http://oeis.org/A113897.

\end{thebibliography}

\bibliographystyle{amsplain.bst}

\end{document}